\crefname{section}{\mbox{\S\!\!}}{\mbox{\S\!\!}}
\DeclareRobustCommand{\circled}[1]{\relax%
  \ifmmode
    \mathord{\text{\ding{\numexpr171+#1\relax}}}%
  \else
    \ding{\numexpr171+#1\relax}%
  \fi
}
\crefname{figure}{Figure}{Figures}
\newcolumntype{L}{>{\raggedright\arraybackslash}X}
\numberwithin{equation}{section}
\numberwithin{equation}{section}
\newtheorem{theorem}{Theorem}[section]
\crefname{theorem}{Theorem}{Theorems}
\newcommand{\newaliasedtheorem}[3]{%
  \newaliascnt{#1}{theorem}%
  \newtheorem{#1}[#1]{#2}%
  \aliascntresetthe{#1}%
  \crefname{#1}{#2}{#3}%
}
\newcommand{\cL}{\mathcal{L}}
\newcommand{\cO}{\mathcal{O}}
\newcommand{\cP}{\mathcal{P}}
\newcommand{\cT}{\mathcal{T}}
\newcommand{\R}{\mathbb{R}}
\newcommand{\N}{\mathbb{N}}
\newcommand*{\E}{\mathbb{E}}
\providecommand{\spann}{\operatorname{Span}}
\providecommand{\diag}{\operatorname{\mathbb{D}}}
\newcommand*{\eps}{\varepsilon}
\renewcommand{\epsilon}{\varepsilon}
\newcommand*{\sig}{\sigma}
\newcommand{\plusminus}{\raisebox{.2ex}{$\scriptstyle\pm$}}
\DeclareMathOperator*{\argmin}{argmin}
\providecommand{\abs}[1]{\left\lvert#1\right\rvert}
\providecommand{\norm}[1]{\lVert{#1}\rVert}
\newcommand{\T}{^\top}
\newcommand{\Ker}{\mathrm{Ker}}
\newcommand{\GDA}{\textsc{GDA}}
\def\blfootnote{\gdef\@thefnmark{}\@footnotetext}
\begin{document}

\title{Negative Stepsizes Make Gradient-Descent-Ascent Converge}

\author{
 	Henry Shugart
 	\\	UPenn \\	\texttt{hshugart@upenn.edu}
 	\and
 	Jason M. Altschuler
 	\\	UPenn \\	\texttt{alts@upenn.edu}
}
\date{} 
\maketitle

\begin{abstract}
  Efficient computation of min-max problems is a central question in optimization, learning, games, and control. Arguably the most natural algorithm is gradient-descent-ascent (GDA). However, since the 1970s, conventional wisdom has argued that GDA fails to converge even on simple problems. This failure spurred an extensive literature on modifying GDA with additional building blocks such as extragradients, optimism, momentum, anchoring, etc.\ In contrast, we show that GDA converges in its original form by simply using a judicious choice of stepsizes.

  The key innovation is the proposal of unconventional stepsize schedules (dubbed \emph{slingshot stepsize schedules}) that are time-varying, asymmetric, and periodically negative. We show that all three properties are necessary for convergence, and that altogether this enables GDA to converge on the classical counterexamples (e.g., unconstrained convex-concave problems). The core algorithmic intuition is that although negative stepsizes make backward progress, they de-synchronize the min and max variables (overcoming the cycling issue of GDA), and lead to a \emph{slingshot phenomenon} in which the forward progress in the other iterations is overwhelmingly larger. This results in fast overall convergence. Crucially, for this de-synchronization we break symmetry by alternating negative steps for the min and max variables---this goes beyond what is possible with classical reductions from min-max problems to monotone operator theory, highlighting the importance of the intrinsic asymmetry in min-max problems. 

  Geometrically, the slingshot dynamics leverage the non-reversibility of gradient flow: positive/negative steps cancel to first order, yielding a second-order net movement in a new direction that leads to convergence and is otherwise impossible for GDA to move in. We interpret this as a second-order finite-differencing algorithm and show that, intriguingly, it approximately implements consensus optimization, an empirically popular algorithm for min-max problems involving deep neural networks (e.g., training GANs).

\end{abstract}

\newpage
\setcounter{tocdepth}{2}
\tableofcontents
 \newpage

\section{Introduction}\label{sec:intro}
This paper revisits the numerical computation of min-max problems, a.k.a.\ saddle-point problems: 
\begin{equation}\label{eq:minmax}
\min_{x \in \mathcal{X}}\ \max_{y \in \mathcal{Y}}\; f(x,y).
\end{equation}
Efficient computation of such problems is a central question in many areas, such as applied mathematics~\citep{benzi2005numerical}, game theory~\citep{von1947theory}, constrained optimization \citep{Bertsekas_2014}, distributed optimization \citep{boyd2011distributed},  robust optimization~\citep{ben2009robust}, robust control~\citep{hast2013pid}, adversarial machine learning~\citep{Madry_Makelov_Schmidt_Tsipras_Vladu_2019}, and generative adversarial networks (GANs)~\citep{Goodfellow_Pouget-Abadie_Mirza_Xu_Warde-Farley_Ozair_Courville_Bengio_2014}, among many others.

\begin{figure}
	\centering
	\includegraphics[width=0.4\linewidth]{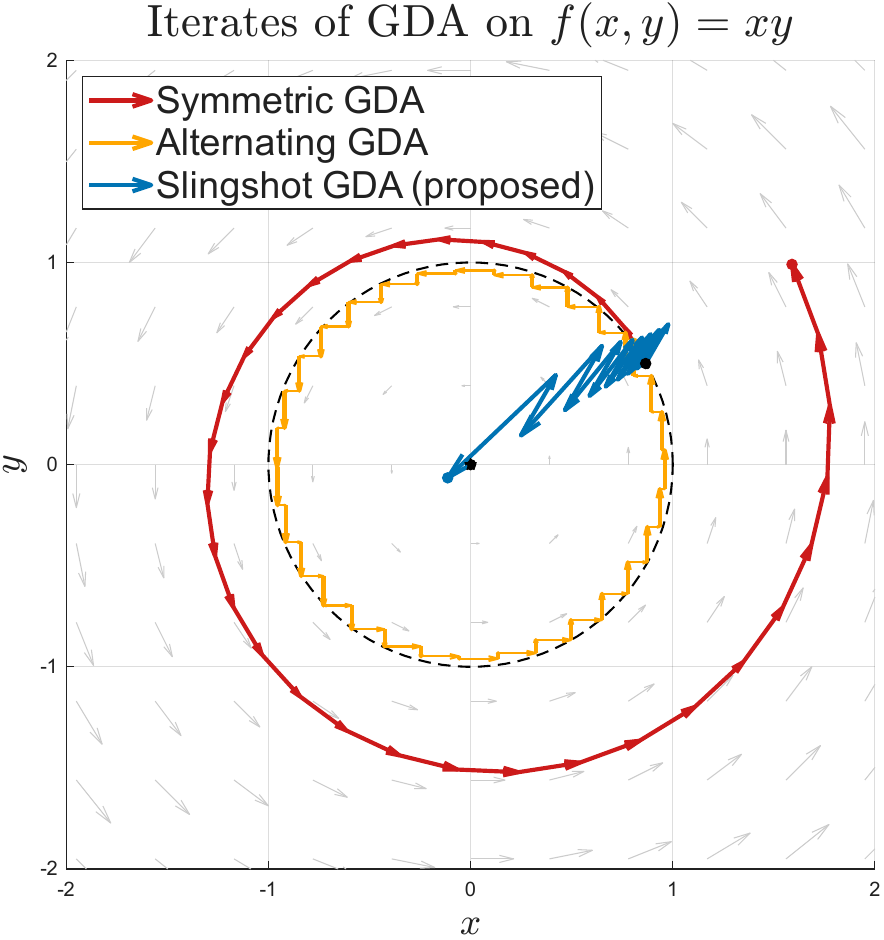}
	\caption{\footnotesize GDA trajectories for a classical counterexample: the unconstrained bilinear problem $\min_{x \in \R} \max_{y \in \R} xy$. The unique solution is the origin. GDA fails to converge with standard stepsize schedules, e.g., it diverges if $\alpha_t=\beta_t > 0$ (red), or limit-cycles if $\alpha_t,\beta_t$ are alternately positive and zero (yellow). Our proposed stepsize schedule (blue) enables GDA to converge for the first time. Standard GDA attempts to follow the grey vector field $(-\nabla_x f, \nabla_y f) = (-y,x)$; in contrast, our proposed GDA alternately moves in the directions $(\nabla_x f, \nabla_y f) = (y,x)$ and $(-\nabla_x f, -\nabla_y f) = (-y,-x)$. This trajectory ``slingshots'' to make progress every two iterations. 
    }
	\label{fig:GDAxy}
\end{figure}

First-order algorithms are the predominant choice for solving~\eqref{eq:minmax} due to their scalability, simplicity, and flexibility. Arguably the most natural such algorithm is gradient-descent-ascent (GDA), which is the extension of gradient descent from standard optimization to min-max optimization:
\begin{equation}\label{eq:GDAupdate}
	\begin{aligned}
		x_{t+1} &= x_t - \alpha_t \nabla_x f(x_t,y_t),\\
		y_{t+1} &= y_t + \beta_t \nabla_y f(x_t,y_t).
	\end{aligned}
\end{equation}
However, conventional wisdom states that GDA fails to converge even on simple problems. A folklore counterexample~\citep{samuelson1949market, korpelevich1976extragradient} is that GDA either diverges or gets stuck in limit cycles even on the toy $1$-dimensional unconstrained bilinear problem
\begin{equation}\label{eq:xy}
	\min_{x\in \R} \max_{y\in \R} \; xy\,.
\end{equation}
Indeed, for this problem, the GDA update~\eqref{eq:GDAupdate} simplifies to a linear dynamical system 
\begin{equation}\label{eq:xy-gda-intro}
    \begin{bmatrix}
        x_{t+1} \\ y_{t+1}
    \end{bmatrix}
    =
    \begin{bmatrix}
        1 & -\alpha_t \\ 
        \beta_t & 1
    \end{bmatrix}
    \begin{bmatrix}
        x_t \\ y_t
    \end{bmatrix}
\end{equation} 
that is expansive since the determinant of the update matrix is $1 + \alpha_t \beta_t$, which is at least $1$ for any standard choice of GDA stepsizes $\alpha_t,\beta_t \geq 0$. See~\cref{fig:GDAxy} for an illustration, and see \cref{ssec:warmup:diverge} for full details on this counterexample. 

\par In fact, this failure is generic: the GDA trajectory fails to converge on \emph{every} bilinear problem, not just this toy problem~\eqref{eq:xy}. Moreover, this failure provably applies to arbitrary stepsize schedules $\alpha_t,\beta_t \in \R$ which fall under any of the standard categories: non-negative stepsizes ($\alpha_t,\beta_t \geq 0$),  symmetric stepsizes ($\alpha_t = \beta_t$), and/or constant stepsizes ($\alpha_t \equiv \alpha, \beta_t \equiv \beta)$.

\par There are only a few restricted settings where GDA converges: convex-concave problems with uniformly bounded subgradients \citep{Nedić_Ozdaglar_2009,zinkevich2003online}, and under strong growth conditions such as strong convexity and strong concavity (albeit at a provably slow convergence rate~\citep{Liang_Stokes_2019,Lee_Cho_Yun_2024}). These assumptions prohibit natural settings like unconstrained bilinear, quadratic, or convex-concave problems.
Moreover, the former result only applies to the \emph{average} iterate---since even in the toy bilinear problem  $\min_{x:|x|\leq1}\max_{y:|y|\leq1}xy$, the iterates still cycle/diverge, constrained only by the compact domain (imagine~\cref{fig:GDAxy} with a bounding box). This is a well-documented issue for applications like training GANs where convexity-concavity fails and average iterates are nonsensical models. 

\par These important issues have spurred a fruitful and extensive literature aimed at obtaining convergence without strong assumptions and without averaging iterates. Since the seminal paper~\citep{korpelevich1976extragradient}, the de facto approach has been to modify GDA with additional building blocks such as extragradients, momentum, optimism, anchoring, and more; see the prior work section below.

\subsection{Contribution: GDA actually can converge}\label{ssec:intro:cont}

Contrary to conventional wisdom, we show that GDA actually can converge in its original form~\eqref{eq:GDAupdate} by judiciously choosing stepsizes. The key innovation is the proposal of unconventional stepsize schedules---dubbed \emph{slingshot stepsize schedules}, described below---that are \emph{time-varying}, \emph{asymmetric}, and (most surprisingly) use interspersed \emph{negative stepsizes}. 
All three characteristics are provably necessary for GDA to converge even on the bilinear counterexample~\eqref{eq:xy}.

\par Our main results prove that these stepsize schedules enable GDA to converge on unconstrained bilinear, quadratic, and convex-concave min-max problems---the classical counterexamples for GDA.  Our results do not require extragradients, momentum, optimism, anchoring, or any other modification to GDA.

The focus of this paper is on whether GDA can converge at all and why our counterintuitive stepsizes are helpful, not on obtaining the fastest complexity bounds for any algorithm. Nevertheless, the resulting convergence rates are competitive with more involved algorithms. For bilinear and quadratic problems, our stepsize schedule enables GDA to converge at an accelerated rate which exactly matches the information-theoretically optimal rate among arbitrary first-order methods. For convex-concave problems, our stepsize schedule enables GDA to match the known convergence rates for extragradient and optimistic GDA. Finally, for strongly-convex-strongly-concave problems, GDA already converges with standard stepsize schedules (this is the only such setting in the paper) but at a slow rate; our stepsize schedule enables GDA to converge at the optimal accelerated rate for the first time. Our results do not require average iterates: they apply to the best iterate for the convex-concave setting and to the last iterate in all other settings, see the discussion in~\cref{ssec:intro:prior}.

\paragraph*{Key insight: progress in $2$ iterations.} The slingshot stepsize schedule is not designed to make progress in every individual GDA step, but rather over pairs of steps. This non-greedy viewpoint is essential: some individual steps use negative stepsizes and therefore move ``backward'', but the forward and backward movements can combine favorably over consecutive iterations. Thus the stepsizes must be designed and analyzed through their net effect over multiple iterations. 
\par The analysis of this two-step progress necessarily depends on the structure of $f$. When $\nabla f$ is linear, as for bilinear and quadratic objectives, the two-step dynamics can be characterized exactly: we show that paired slingshot steps reduce the dynamics to a polynomial iteration, which enables reducing this GDA stepsize design question to the classical question of choosing GD stepsizes for quadratic minimization via Chebyshev polynomials~\citep{young53}. See~\cref{ssec:plausibility:bilinear} for an overview. When $\nabla f$ is nonlinear, this polynomial connection breaks down, mirroring the classical distinction between quadratic and general convex minimization~\citep{d2021acceleration}. Nevertheless, the same two-step mechanism persists: a Taylor expansion shows that the forward/backward movements over two iterations cancel to first order and produce a second-order drift in a direction which enables convergence and is otherwise impossible for GDA to move in. See~\cref{ssec:plausiblity:approx} for an overview. To turn this local intuition into a global convergence result under only the standard assumptions of smoothness and (strong) convexity-concavity, we prove direct two-step progress guarantees. We develop these linear and nonlinear analyses in~\cref{sec:linear,sec:nonlinear}, respectively, and the Taylor-expansion interpretation in~\cref{sec:connections}. We provide these multiple derivations in order to give sharper convergence rates in specific settings, as well as to provide complementary interpretations of the two-step progress more generally.

\subsection{Slingshot stepsize schedules}\label{ssec:intro:slingshot}

We begin by introducing the proposed stepsize schedules.
Since optimal implementations vary across settings, we propose a \emph{family} of stepsize schedules. These schedules pair stepsizes in consecutive iterations $2t,2t+1$ so that:
\begin{itemize}
        \item[(i)] \underline{Negative stepsizes:} At least one of $\alpha_{2t},\alpha_{2t+1},\beta_{2t},\beta_{2t+1}$ is strictly negative. 
        
    \item[(ii)] \underline{Alternating products:} 
    $\alpha_{2t} \beta_{2t+1} \geq 0$ and $\alpha_{2t+1} \beta_{2t} \geq 0$.

    \item[(iii)] \underline{Consecutive sums:} 
    $\alpha_{2t} + \alpha_{2t+1} \geq 0$ and $\beta_{2t} + \beta_{2t+1} \geq 0$.
\end{itemize}
Property (iii) ensures that the forward movement outweighs the backward movement from (i), and (ii) ensures that the higher-order net movement arising from positive/negative cancellations is in a helpful direction. We describe this in detail below; for concreteness we first start with two examples.

\paragraph*{Concrete examples.} 
In this paper we focus on two instantiations of this general family of stepsize schedules, dichotomized by whether the min-max objective $f$ has a linear gradient $\nabla f$:
\begin{itemize}
	\item \underline{Linear $\nabla f$.} For such problems (i.e., bilinear and quadratic objectives), we propose slingshot stepsize schedules of the form
	\begin{align}\label{eq-intro:schedule-linear}
			\alpha_{2t} = -\beta_{2t} = -\alpha_{2t+1} = \beta_{2t+1} = h_t\
		\end{align}
		for a suitable choice of $\{h_t\}$. Observe that the minimization variable $x$ performs a positive step ($\alpha_{2t} > 0$) followed by a negative step ($\alpha_{2t+1} < 0$) if $h_t > 0$, and vice versa for the maximization variable $y$. Thus, in each iteration, the stepsizes for $x$ and $y$ are \emph{anti-correlated} in that they are equal and opposite. See~\cref{fig:stepsizes} for an illustration.

	\item \underline{Nonlinear $\nabla f$.} For such problems (e.g., convex-concave objectives), we propose slingshot stepsize schedules of the form
	\begin{align}\label{eq-intro:schedule-nonlinear}
		\alpha_{2t} = -\beta_{2t} = \beta_{2t+1} = h,\, \alpha_{2t+1} = 0 \qquad  \text{ or } \qquad -\alpha_{2t} =
		\beta_{2t} = \alpha_{2t+1} = h,\, \beta_{2t+1} = 0\,,
	\end{align}
	where we randomize this ``or''  with probability $1/2$ to break the symmetry between $x$ and $y$. Notice that properties (i)-(iii) hold almost surely, i.e., in every realization of the stepsizes. 
\end{itemize}
The reason we study multiple versions of the slingshot stepsize schedule is that the former proposal is exactly optimal among all stepsize schedules for settings where $\nabla f$ is linear (see~\cref{sec:linear}), but actually fails to converge in settings where $\nabla f$ is nonlinear (see~\cref{sec:nonlinear}). The latter proposal fixes this convergence issue for general convex-concave problems.

\begin{figure}
	\centering
	\includegraphics[width=0.5\linewidth]{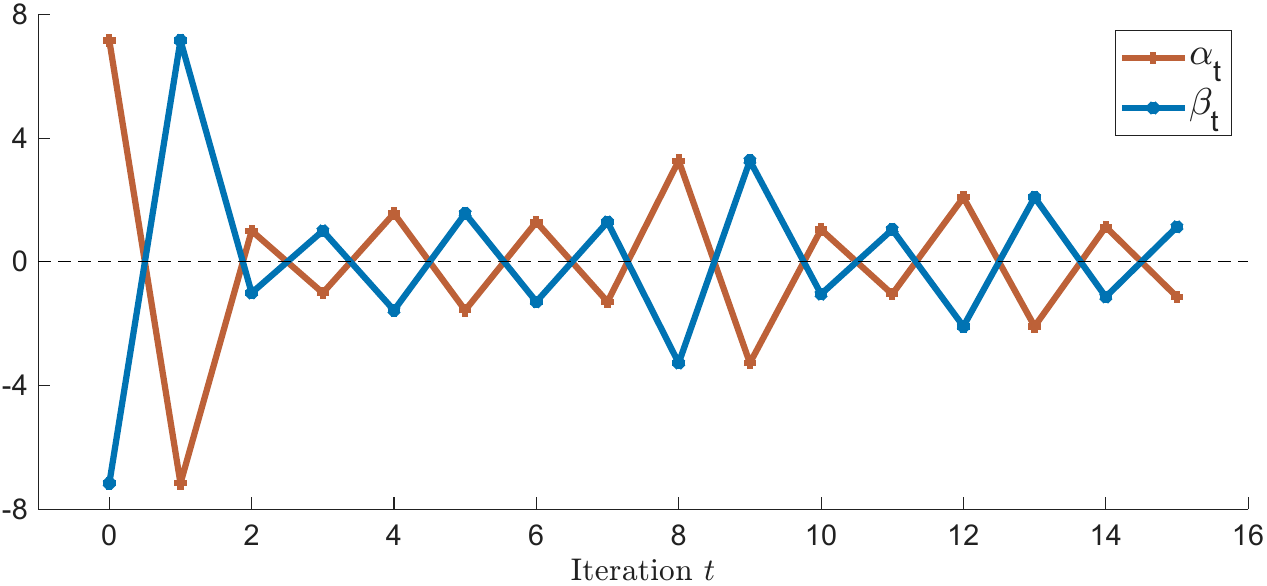} 
	\caption{\footnotesize Optimal stepsize schedule for unconstrained bilinear min-max optimization, here for $16$ iterations, for a bilinear coupling with spectral bounds $1/100 \leq \sigma_i^2(\nabla_{xy}^2 f) \leq 1$, and in the stable fractal ordering discussed in~\cref{app:stability}. Recall that $\alpha_t$ and $\beta_t$ respectively denote the stepsizes for $x$ and $y$; standard schedules take $\alpha_t,\beta_t \geq 0$ and often equal. Observe that the proposed stepsizes are time-varying, asymmetric, and alternately negative.}
	\label{fig:stepsizes}
\end{figure}

\paragraph*{Three key properties.}  Slingshot stepsize schedules are time-varying, asymmetric, and periodically use negative stepsizes. As already
mentioned, all three properties are provably necessary\footnote{
		This statement applies to \emph{real} stepsize schedules $\alpha_t,\beta_t \in \R$. Intriguingly, we show that \emph{complex} stepsize schedules can also make GDA converge (and this does not require asymmetry); details in~\cref{ssec:linear-alternative}. 
        This appears to be the first use of complex stepsizes in optimization or min-max optimization, and is surprising because the problem data is purely real.
        However, these complex stepsizes are specially tailored to the unconstrained bilinear setting and do not extend gracefully to more general settings, hence we focus on real stepsizes. \label{fn:complex}
} for GDA to converge even on bilinear problems. We comment on each of these three key properties:

\begin{itemize}
	\item 
	\underline{Negative stepsizes (i.e., $\alpha_t < 0$ or $\beta_t < 0$).} 
	Such steps move \emph{backward}: they move in the positive gradient direction (an ascent direction) for the minimization variable $x$, and in the negative gradient direction (a descent direction) for the maximization variable $y$. At first glance, it seems that such steps are nonsensical and should only worsen the standard divergent behavior of GDA. Counterintuitively, we show that such steps are essential for convergence. 
	Negative steps make backward progress, but de-synchronize the min and max variables (overcoming the cycling issue of GDA), and lead to a \emph{slingshot phenomenon} in which the forward progress in the other iterations (with positive stepsizes) is overwhelmingly larger. This results in fast overall convergence. The key intuition is that the positive and negative movements cancel to first order, resulting in a second-order movement in a new direction that leads to convergence and is otherwise impossible for GDA to move in. 
    We explain this in detail in~\cref{ssec:plausiblity:approx}.
	\item 
	\underline{Asymmetric stepsizes (i.e., $\alpha_t \neq \beta_t$).}
	Slingshot stepsize schedules update differently the minimization variable $x$ and maximization variable $y$. This goes beyond what is possible with classical reductions from min-max problems to monotone operator theory (which always concatenate the variables $z = (x,y)$ and then treat them symmetrically), highlighting the importance of the intrinsic asymmetry in min-max problems.
    \par In fact, fundamental obstructions prohibit extending the slingshot stepsizes from convex-concave min-max problems to variational inequalities with monotone operators. Taking different stepsizes in $x$ and $y$ is not even \emph{implementable} for generic monotone operator problems since operator monotonicity is invariant with respect to coordinate permutations and in particular does not have block structure $z = (x,y)$ as in min-max problems. Further, taking the same stepsizes in all variables---i.e., algorithms of the form $z_{t+1} = z_t - \eta_t F(z_t)$---fails to solve unconstrained variational inequalities $F(z) = 0$ for smooth monotone operators $F$ for any choice of stepsizes $\eta_t$; indeed, in the special case of min-max problems, such algorithms devolve into GDA with symmetric stepsizes, which fails to converge even for smooth bilinear problems (see~\cref{ssec:warmup:diverge}).  Our results exploit the block structure of the variables $z = (x,y)$ to implement the slingshot stepsizes, and exploit the convex-concave block structure of the monotone operator $F(z) = (\nabla_x f, -\nabla_y f)$ to analyze the slingshot stepsizes.
	\item
	\underline{Time-varying stepsizes (i.e., $\alpha_t,\beta_t$ are not constant in $t$).}
	Slingshot stepsize schedules do not make progress in each iteration (negative steps make backward progress). Nevertheless, we prove that this algorithm converges in the long-run. 
	\par We remark that slingshot stepsize schedules can be time-varying in two ways. Varying \emph{within} pairs of iterations (e.g., $\alpha_{2t} \neq \alpha_{2t+1}$ and $\beta_{2t} \neq \beta_{2t+1}$) enables GDA to converge. Further varying \emph{between} pairs of iterations (i.e., $h_t$ varying in $t$) can accelerate this convergence.
\end{itemize}
While some existing stepsize schedules have the latter two properties (diminishing stepsizes are time-varying, two-timescale GDA is asymmetric, and alternating GDA is time-varying and asymmetric), none have the first property. Recall that \emph{all} three properties are provably required for GDA to converge. Perhaps the closest similarity to negative stepsizes is negative momentum~\citep{gidel2019negative}, but that is fundamentally different as it changes GDA by adding internal dynamics, uses positive stepsizes, often has time-invariant, symmetric parameters, and yields trajectories that are qualitatively completely different (see~\cref{fig:diff-behavior}).
See the related work~\cref{ssec:intro:prior} for further discussion.

\subsection{Plausibility argument for bilinear problems}\label{ssec:plausibility:bilinear}

Here we provide a simple plausibility argument for why time-varying, asymmetric, and periodically negative stepsizes enable GDA to converge. For this argument, consider bilinear problems $\min_{x} \max_y x^{\top} \bm{B} y$ with stationary solution at the origin (without loss of generality by translation), and for simplicity assume that $\bm B$ has non-zero singular values with $m\bm{I} \preceq \bm{BB}^{\top}, \bm{B}^{\top}\bm{B} \preceq M\bm{I}$; the general case is a straightforward extension via SVD, see~\cref{ssec:bilinear}.

\paragraph*{A first convergence result.} 
For bilinear problems, two iterations of the GDA algorithm~\eqref{eq:GDAupdate} amounts to the linear dynamical system
\begin{align}
    \begin{bmatrix}
        x_2 \\ y_2
    \end{bmatrix} 
    &= \begin{bmatrix}
            \bm{I} & -\alpha_1\bm{B} \\\beta_1\bm{B}^{\top} & \bm{I}
    \end{bmatrix}
 \begin{bmatrix}
            \bm{I} & -\alpha_0\bm{B} \\\beta_0\bm{B}^{\top} & \bm{I}
    \end{bmatrix}
    \begin{bmatrix}
        x_0 \\ y_0
    \end{bmatrix}\,.
\end{align}
Consider using the slingshot stepsize schedule~\eqref{eq-intro:schedule-linear}, that is
\begin{align}\label{eq:warmup:2}
\alpha_0 = \beta_1 = h \qquad \text{ and } \qquad \beta_0 = \alpha_1 = -h\,,
\end{align}
for $h > 0$. Then this two-step update simplifies to
\begin{align}
    \begin{bmatrix}
        x_2 \\ y_2
    \end{bmatrix} 
    =
    \underbrace{\begin{bmatrix}
            \bm{I} & h \bm{B} \\ h\bm{B}^{\top} & \bm{I}
    \end{bmatrix}}_{\bm{U_1}}
    \underbrace{\begin{bmatrix}
            \bm{I} & -h\bm{B} \\ -h\bm{B}^{\top} & \bm{I}
    \end{bmatrix}}_{\bm{U_0}}
    \begin{bmatrix}
        x_0 \\ y_0
    \end{bmatrix}
    = \underbrace{\begin{bmatrix}
            \bm{I} - h^2 \bm{B}\bm{B}^{\top} & \bm{0} \\
            \bm{0} & \bm{I} - h^2 \bm{B}^{\top}\bm{B}
    \end{bmatrix}}_{\bm{U_1U_0}}
    \begin{bmatrix}
        x_0 \\ y_0
    \end{bmatrix}
    \,,
    \label{eq:warmup:U2}
\end{align}
which is contractive $\|\bm{U_1U_0}\| < 1$ for sufficiently small $h > 0$ since $\bm{B}\bm{B}^\top$ and $\bm{B}^{\top}\bm{B}$ are positive definite. It immediately follows that repeating this $2$-step schedule~\eqref{eq:warmup:2}---that is, $\alpha_n= h, \beta_n = -h$ for $n$ even, and vice versa for $n$ odd---makes GDA converge, and moreover at an exponential rate. 
            
            \par Notice that the key properties of the stepsizes~\eqref{eq:warmup:2} used for the two-step contraction~\eqref{eq:warmup:U2} are that $\alpha_0 + \alpha_1 = \beta_0 + \beta_1 = 0$ (so that the off-diagonal blocks of $\bm{U_1U_0}$ vanish) and $\alpha_0 \beta_1 = \alpha_1 \beta_0 > 0$ (so that the diagonal blocks of $\bm{U_1U_0}$ are contractive). These two properties correspond respectively to (iii) and (ii) in the definition of slingshot stepsize schedules in~\cref{ssec:intro:slingshot}.

			\paragraph*{Key point: contraction in two iterations, despite expansion in any single iteration.}
				To understand this result, it is important to emphasize that GDA converges (i.e., $\|\bm{U_1 U_0}\| <1$) even though each individual iteration is expansive in some directions (i.e., $\|\bm{U_1}\|, \| \bm{U_0}\| > 1$).\footnote{The expansiveness is because both $\bm{U_0}$ and $\bm{U_1}$ can have norm $1 + h \sqrt{M} > 1$; consider the eigenvectors $(1,1)$ and $(1,-1)$ and even the $1$-dimensional instances $\bm{B} = -\sqrt{M}$ or $\sqrt{M}$, respectively. 
				} The expansiveness of each individual step is due to the backward movement: the negative stepsizes $\alpha_1 < 0$ and $\beta_0 < 0$ respectively amount to an ascent step in the minimization variable $x$ and a descent step in the maximization variable $y$. Yet, overall convergence occurs because a variable's backward movement in one iteration is outweighed by the forward movement in the paired iteration. This is the ``slingshot'' behavior illustrated in~\cref{fig:GDAxy}.
				
				\par This can be intuitively understood via the following $2 \times 2$ example:
				\begin{align}
					\begin{bmatrix} 
						1 & \eps \\
						\eps & 1
					\end{bmatrix}
					\begin{bmatrix} 
						1 & -\eps \\
						-\eps & 1
					\end{bmatrix}
					=
					\begin{bmatrix}
						1 - \eps^2 & 0 \\
						0 & 1 - \eps^2
					\end{bmatrix}
                    \label{eq:warmup:2x2}
				\end{align}
				has norm smaller than $1$, even though both constituent matrices have norm larger than $1$. This occurs because the expansive eigenvectors $(1,1)$ and $(1,-1)$ for $\bm{U_1}$ and $\bm{U_0}$, respectively, are contractive eigenvectors for the opposite update matrix, and the contraction $1-\eps$ outweighs the expansion $1+\eps$ since $(1-\eps)(1+\eps) = 1-\eps^2 < 1$.
                
                \par In fact, this simple example~\eqref{eq:warmup:2x2} is the crux of why $\|\bm{U_1U_0}\| < 1$ in the two-step update~\eqref{eq:warmup:U2}: in $1$ dimension, $\bm{U_0}$ and $\bm{U_1}$ are of this form modulo permutation; and in general dimension, simultaneous diagonalization reduces $\bm{U_0}$ and $\bm{U_1}$ to this form on each eigenspace.

							\paragraph*{Optimizing the convergence rate.}
							
							So far we have shown that for sufficiently small $h > 0$, periodically repeating the two-step schedule~\eqref{eq:warmup:2} makes GDA converge at an exponential rate. This amounts to the slingshot stepsize schedule~\eqref{eq-intro:schedule-linear} with $h_t = h$ for all $t$. Now that convergence is ensured, it is natural to ask how fast can one make this exponential convergence? How small should $h$ be?  What is the optimal stepsize schedule? Should the optimal $h_t$ vary over time? 
							
							\par Conveniently, all of these questions are answered by appealing to the following simple but key interpretation of~\eqref{eq:warmup:U2}. We emphasize that this reduction applies only to the slingshot stepsize schedule and not to any other existing schedules.

							\begin{obs}
								[Paired GDA slingshot steps are equivalent to GD steps on the Hamiltonian]
								\label{obs:paired}
								Suppose $f$ is bilinear. Then the following are equivalent:
								\begin{itemize}
									\item Two iterations of GDA with the slingshot stepsize schedule~\eqref{eq:warmup:2} on $\min_x \max_y f(x,y)$. 
									\item One iteration of GD with stepsize $h^2$ on $\min_{x,y} 
								\frac{1}{2}\|\nabla f(x,y)\|^2$. 
								\end{itemize}
							\end{obs}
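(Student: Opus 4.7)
The plan is to prove this by direct computation, noting that the equivalence is in fact a pointwise equality of maps $(x_0,y_0) \mapsto (x_2,y_2)$. Since $f$ is bilinear, write $f(x,y) = x\T \bm{B} y + c\T x + d\T y + e$ for some matrix $\bm{B}$, vectors $c,d$, and scalar $e$. Then $\nabla_x f(x,y) = \bm{B}y + c$ and $\nabla_y f(x,y) = \bm{B}\T x + d$ are both affine in $(x,y)$.

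First I would carry out the two GDA iterations explicitly. Starting from $(x_0,y_0)$ and applying $\alpha_0 = h$, $\beta_0 = -h$ gives $x_1 = x_0 - h(\bm{B}y_0 + c)$ and $y_1 = y_0 - h(\bm{B}\T x_0 + d)$. Applying $\alpha_1 = -h$, $\beta_1 = h$ next yields
\begin{align*}
    x_2 &= x_1 + h(\bm{B}y_1 + c) = x_0 + h\bm{B}(y_1 - y_0) = x_0 - h^2 \bm{B}(\bm{B}\T x_0 + d), \\
    y_2 &= y_1 + h(\bm{B}\T x_1 + d) = y_0 + h\bm{B}\T(x_1 - x_0) = y_0 - h^2 \bm{B}\T(\bm{B}y_0 + c).
\end{align*}
The key identity being exploited is that $\alpha_0 + \alpha_1 = \beta_0 + \beta_1 = 0$, so the first-order-in-$h$ contributions of $\nabla f(x_0,y_0)$ cancel, leaving only gradient \emph{differences} between the two iterations; those differences are themselves first-order in $h$ because $\nabla f$ is affine, which is what produces the $h^2$ factor.

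Second, I would compute one iteration of GD with stepsize $h^2$ on the Hamiltonian $H(x,y) = \tfrac{1}{2}\|\nabla f(x,y)\|^2 = \tfrac{1}{2}\|\bm{B}y + c\|^2 + \tfrac{1}{2}\|\bm{B}\T x + d\|^2$. A direct differentiation gives $\nabla_x H = \bm{B}(\bm{B}\T x + d)$ and $\nabla_y H = \bm{B}\T(\bm{B}y + c)$, so the GD update maps $(x_0,y_0)$ to $(x_0 - h^2 \bm{B}(\bm{B}\T x_0 + d),\, y_0 - h^2 \bm{B}\T(\bm{B}y_0 + c))$, which matches $(x_2,y_2)$ exactly.

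I do not anticipate a substantial obstacle beyond bookkeeping, since the calculation essentially reproduces and reinterprets the matrix identity in \eqref{eq:warmup:U2} (which handles the purely bilinear case $f = x\T \bm{B} y$), with the only new check being that the affine pieces $c,d$ flow through consistently: they survive the first-order cancellation as the affine offsets $\bm{B}d$ and $\bm{B}\T c$ in $\nabla H$, as expected since $H$ is the squared-norm of an affine map. The conceptual point worth highlighting is that the sum conditions $\alpha_0 + \alpha_1 = \beta_0 + \beta_1 = 0$ are precisely what annihilate the first-order terms, while the product conditions $\alpha_0 \beta_1, \alpha_1 \beta_0 \geq 0$ (with magnitude $h^2$) ensure that the surviving second-order movement is a \emph{descent} step on $H$ rather than an ascent step. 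This is the second-order finite-differencing picture foreshadowed in the abstract, and it immediately reduces the convergence analysis of slingshot GDA to the well-understood convergence of GD on the (convex, quadratic) Hamiltonian $H$.
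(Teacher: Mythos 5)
Your proof is correct and follows essentially the same direct computation as the paper, which establishes $\bm{U_1U_0} = \operatorname{diag}(\bm{I}-h^2\bm{BB}^\top,\,\bm{I}-h^2\bm{B}^\top\bm{B})$ in \eqref{eq:warmup:U2} and observes that this is a GD step with stepsize $h^2$ on $\Phi$; the only (harmless) difference is that you carry the affine offsets $c,d$ through the calculation rather than translating them away at the outset as the paper does.
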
 

                            Indeed, a simple calculation shows that one iteration of GD on the ``Hamiltonian'' $\Phi(x,y) := \tfrac{1}{2}\|\nabla f(x,y)\|^2 = \tfrac{1}{2}x^{\top} \bm{BB}^\top x + \tfrac{1}{2} y^{\top} \bm{B}^{\top}\bm{B} y$ is equivalent to the update~\eqref{eq:warmup:U2}. The upshot is that the Hamiltonian is a smooth, strongly-convex quadratic ($m \bm{I} \preceq \nabla^2 \Phi \preceq M \bm{I}$), hence we may appeal to classical analysis techniques and stepsize schedules for GD. 
							For example, simply repeating~\eqref{eq:warmup:U2} with $h = \sqrt{2/(M+m)}$ yields exponential convergence with contraction factor $(\kappa - 1)/(\kappa+ 1)$, where $\kappa := M/m$ denotes the condition number. Better yet, using optimal GD stepsize schedules~\citep{young53} improves this to the accelerated rate $(\sqrt{\kappa} - 1)/(\sqrt{\kappa} + 1)$. This is our proposed slingshot stepsize schedule~\eqref{eq-intro:schedule-linear} for bilinear problems, which uses time-varying $h_t$ based on the roots of Chebyshev polynomials. See~\cref{ssec:bilinear} for full details on this stepsize schedule and the fact that it not only converges, but moreover converges at the accelerated rate that is information-theoretically optimal among first-order methods.
                            See also~\cref{fig:conv} for an illustration of how the practical performance matches the theory.

							\begin{figure}
								\centering
								\includegraphics[width=0.75\linewidth]{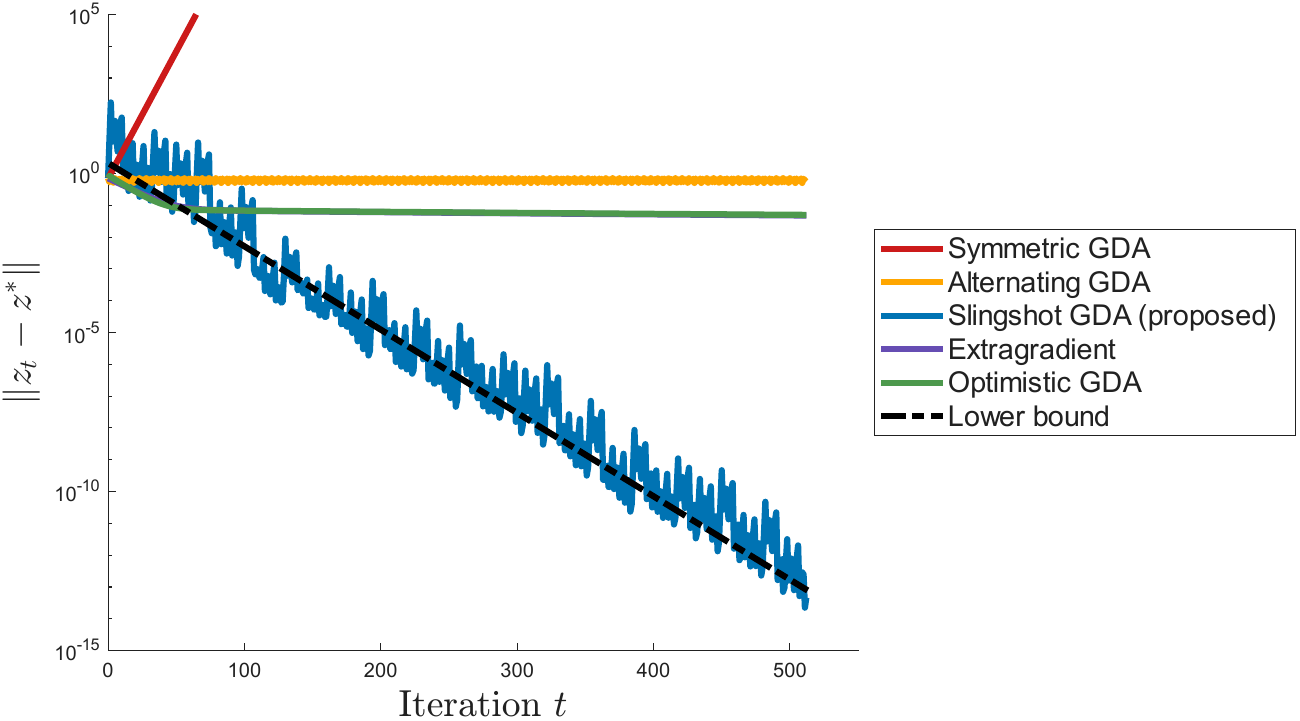}
								\caption{\footnotesize Comparison of algorithms on a random bilinear problem with condition number $\kappa = 300$. Using standard stepsize schedules, GDA diverges (red) or cycles (yellow). Known ways of making GDA converge require changing the algorithm  and do not always lead to fast convergence (purple, green---nearly overlapping here) due to suboptimal rates of $\kappa \log 1/\eps$ rather than $\sqrt{\kappa} \log 1/\eps$, see~\cref{tab:bilinear}. (The faster initial convergence is only because this is a random problem rather than worst-case.) The proposed slingshot stepsize schedule requires no changes to GDA beyond stepsizes and leads to the optimally accelerated convergence rate for bilinear problems. Performance on this random problem is similar to worst-case problems (dotted minimax lower bound); qualitatively similar behavior is observed for other problems. Further details in \cref{app:experimental-setup}.
								}
								\label{fig:conv}
							\end{figure}

    \paragraph*{Beyond bilinear problems.} The above calculations can be extended from bilinear to quadratic objectives, essentially since linearity of $\nabla f$ is the key property needed for the two-step argument~\eqref{eq:warmup:U2}; details in~\cref{sec:linear}. However, further analysis techniques are required beyond the setting of linear $\nabla f$, e.g., for convex-concave $f$. Of course one could try to linearize and then use a similar spectral analysis, but such arguments would be geared toward local rather than global convergence and moreover would necessitate assuming much stronger structural properties on $f$ than are necessary. In~\cref{sec:nonlinear} we prove progress after two steps for the standard setting where $f$ is only assumed convex-concave and smooth, and then use that to conclude global convergence.

	\subsection{Interpretation via second-order approximation}\label{ssec:plausiblity:approx}

	Here we provide a complementary interpretation of the two-step progress made by the slingshot stepsize schedule. This interpretation extends beyond the setting of linear $\nabla f$: it is exact if $\nabla f$ is linear, and holds approximately for the general setting where $\nabla f$ is nonlinear.

    \par The overarching idea is that positive/negative steps in consecutive iterations cancel to first order, leading to a second-order movement in a new direction that leads to convergence and is otherwise impossible for GDA to move in. For intuition, a useful analogy is \emph{parallel parking}. Parallel parking a car requires paired movements: first move forward (at an angle by turning the wheel), then backward by the same amount (at a slightly different angle by turning the wheel). The car's forward/backward movement cancel to first order, resulting in a second-order movement in a new direction (sideways) that leads to parking and is otherwise impossible for the car to move in. Geometrically, the alternating use of the two vector fields $\nabla f$ and $-\nabla f$ leads to a second-order drift (the new direction) which quantifies the \emph{non-reversibility} of gradient descent/ascent; see~\cref{sec:connections} for Lie bracket/commutator-type calculations and interpretations.
    This should be contrasted with standard GDA, which always follows the same vector field $(-\nabla_x f, \nabla_y f)$.

 \par The second-order movement can be explicitly computed by a Taylor expansion. For example, for the nonlinear version~\eqref{eq-intro:schedule-nonlinear} of the slingshot stepsize schedule, the expected movement from two steps of GDA is:
	\begin{align}
		\E 
		\begin{bmatrix}
			x_{2t+2} \\ y_{2t+2}
		\end{bmatrix}
		\approx
		\begin{bmatrix}
			x_{2t} \\ y_{2t}
		\end{bmatrix}
		+ 
		\underbrace{\frac{h}{2} 
			\begin{bmatrix}
				- \nabla_x f(x_{2t}, y_{2t}) \\
				\nabla_y f(x_{2t}, y_{2t})
		\end{bmatrix}}_{\text{\shortstack{standard direction: GDA update}}}
		- \underbrace{\frac{h^2}{2} \nabla^2 f(x_{2t}, y_{2t}) \nabla f(x_{2t}, y_{2t})}_{\text{\shortstack{new direction: HGD update \\ (from positive/negative cancellation)}}}
		\label{eq:css-2ndorder}
	\end{align}
	up to error terms of order $\mathcal{O}(h^3)$. See~\cref{sec:connections} for details. This expansion~\eqref{eq:css-2ndorder} can be interpreted as a combination of two movements: a standard ``forward'' GDA step and a new direction that is impossible in a single GDA step. Notice that $h/2$ in the first direction arises from $\E[\alpha_{2t} + \alpha_{2t+1}] = \E[\beta_{2t} + \beta_{2t+1}]$, and $h^2/2$ in the second direction arises from $\E[\alpha_{2t} \beta_{2t+1} + \alpha_{2t+1} \beta_{2t}]$; these are both positive due to properties (iii) and (ii) in the definition of slingshot stepsize schedules in~\cref{ssec:intro:slingshot}.

	\paragraph*{New direction: Hamiltonian gradient descent.} This new direction is a gradient descent update on $\min_{(x,y)} \Phi(x,y)$ where $\Phi(x,y) = \tfrac{1}{2}\|\nabla f(x,y)\|^2$ denotes the Hamiltonian. 
	This generalizes~\cref{obs:paired}, which only applies to bilinear objectives $f$. Three remarks are in order. First, this auxiliary problem $\min_{x,y} \Phi(x,y)$ is helpful for solving the original problem $\min_x \max_y f(x,y)$ since minimizers for the former are in correspondence with stationary solutions for the latter (assuming their existence). Second, note that GDA has access neither to gradients of $\Phi$ nor to Hessians of $f$; the paired cancellation in the slingshot stepsize schedule enables GDA to approximately move in such directions using only standard query access to $\nabla f$. Third, note that for this Hamiltonian descent direction, $x$ and $y$ \emph{coordinate} to jointly minimize the stationary criteria $\Phi(x,y)$.  This is in contrast to standard GDA, where $x$ and $y$ \emph{compete} to minimize and maximize $f$, respectively, which gives rise to the standard intuition for GDA's cycling behavior as illustrated in~\cref{fig:GDAxy}.

	\paragraph*{Effective two-step movement: consensus optimization.} The expansion~\eqref{eq:css-2ndorder} shows that two steps of GDA with the slingshot stepsize schedule approximately implements one step of consensus optimization, a popular algorithm for training GANs that involves both a standard ``forward'' GDA step and a step of Hamiltonian GD~\citep{Mescheder_Nowozin_Geiger_2017}. This algorithm's name arises from the interpretation that $x$ and $y$ are, in part, coordinating. An interesting difference is that consensus optimization exactly implements~\eqref{eq:css-2ndorder} for structured objectives $f$ using double backpropagation, whereas our algorithm approximately implements~\eqref{eq:css-2ndorder} via finite-differencing---which applies to general $f$ and requires only black-box queries to $\nabla f$. See~\cref{ssec:connections:nonlinear} for further discussion of similarities and differences.

	\paragraph*{From intuition to convergence.} This connection to consensus optimization provides some justification for our counterintuitive algorithm, since consensus optimization is well-documented to perform well empirically even in challenging non-convex landscapes like GANs~\citep{Mescheder_Nowozin_Geiger_2017}.
	However, this connection does not directly imply non-asymptotic\footnote{Asymptotic local convergence (without rates) for the slingshot stepsize schedule can be shown for non-convex-non-concave settings by taking arbitrarily small stepsizes---in which case the approximation~\eqref{eq:css-2ndorder} to consensus optimization becomes arbitrarily precise---and applying asymptotic local convergence guarantees for that algorithm, as in~\citep[Corollary 8]{Mescheder_Nowozin_Geiger_2017}.} convergence of the slingshot stepsize schedule. 
	This is both because the connection~\eqref{eq:css-2ndorder} is inexact, and more importantly, because the theoretical convergence properties of consensus optimization are still nascent---in particular although recent works have proved elegant convergence results~\citep{Mescheder_Nowozin_Geiger_2017,Abernethy_Lai_Wibisono_2021,Liang_Stokes_2019,Azizian_Mitliagkas_Lacoste-Julien_Gidel_2020}, these require strong assumptions and do not apply to standard nonlinear setups such as smooth convex-concave $f$ or even smooth strongly-convex-strongly-concave $f$. Therefore, when we prove our convergence results for the setting of smooth convex-concave $f$ in~\cref{sec:nonlinear}, rather than building upon this connection to consensus optimization, we instead directly prove progress for two steps of the slingshot stepsize schedule. (In fact, in the other direction, we are hopeful that the convergence results we show for the slingshot stepsizes may be adaptable to prove analogous convergence results for consensus optimization, which would help justify the strong empirical performance of that algorithm over the past decade.)

\subsection{Related work}\label{ssec:intro:prior}

There is an extensive literature on GDA since it and its variants form the primary workhorse for solving large-scale min-max problems.
In addition to the works already cited above, here we contextualize further with the closest lines of work. See~\cref{tab:bilinear,tab:cc,tab:scsc} for complexities of variants of GDA for the various standard settings studied in this paper.

\paragraph*{Performance criteria.} One reason that there is an extensive literature is that there are different ways of measuring the error of an approximate min-max solution $z = (x,y)$. Two standard criteria are approximate stationarity $\|\nabla f(z)\|$ and distance to the solution set $\inf_{\textrm{solution }z^*} \|z - z^*\|$; our results apply to the former criterion in all settings, and also apply to the second criterion in all settings where such a result is possible\footnote{For example, there exist unconstrained convex-concave min-max problems (such as arbitrarily flat quadratic functions) for which it is impossible for any algorithm to achieve non-trivial rates for minimizing the distance to a solution. Details in~\cref{rem:quad-metrics}.}. 
Another criterion is the duality gap $\max_{y'} f(x,y') - \min_{x'} f(x', y)$; however that is typically studied in constrained settings as it is potentially infinite in unconstrained settings (e.g., $\min_{x \in \R} \max_{y \in \R} xy$). 
The literature has also studied alternative criteria including variations of approximate stationarity specifically for constrained settings~\citep{Lin_Jin_Jordan_2020,Lin_Jin_Jordan_2025} and restricted duality gaps for unconstrained settings~\citep{Nesterov_2007, Mokhtari_Ozdaglar_Pattathil_2020, Jiang_Mokhtari_2024}, and it would be interesting to investigate our stepsizes in these contexts as well. 

\paragraph*{Best/averaged/last iterate.} Another reason that there is an extensive literature is that the performance criterion can be measured at either the best iterate $\argmin_{z \in \{z_1, \dots, z_T\}} \| \nabla f(z_t)\|$, the averaged (a.k.a.\,ergodic) iterate $\tfrac{1}{T} \sum_{t=1}^T z_t$, or the last iterate $z_T$. 
Early convergence rates were primarily for the best and averaged iterates since this can simplify the analyses, and recent works have also shown last iterate results, see for example \citep{gorbunov2022extragradient,Gorbunov_Taylor_Gidel, cai2022tight, Daskalakis_Ilyas_Syrgkanis_Zeng_2018,Abernethy_Lai_Wibisono_2021} and the references within. 
Note that both best and last iterate results do not require averaging the iterates, an important benefit for practical settings such as training GANs (see the discussions in e.g.,~\citep{hsieh2021limits,mertikopoulos2019optimistic}). Our results apply to the best iterate for the convex-concave setting and to the last iterate in all other settings. It is a natural question to extend our convex-concave result to the last iterate; this seems potentially plausible given that the analogous results hold for extragradient and optimistic GDA~\citep{gorbunov2022extragradient,Gorbunov_Taylor_Gidel,cai2022tight}.

\paragraph{GDA with different stepsize schedules.} As described above (see~\cref{fig:GDAxy} for an illustration and~\cref{ssec:warmup:diverge} for short proofs), all previous versions of GDA fail to converge on simple problems like $\min_x \max_y xy$. However, two popular stepsize schedules have been proposed which can enable convergence in some new settings (although still not problems like $\min_x \max_y xy$), accelerate convergence in settings where GDA is already known to converge, and also help in practice.

\emph{Alternating GDA} updates one variable at a time by 
selecting $\alpha_{2t} > 0, \beta_{2t} = 0$ (descent step in $x$) and $\alpha_{2t+1} = 0, \beta_{2t+1} > 0$ (ascent step in $y$) in consecutive iterations. This alternation improves divergence to limit cycling for the problem $\min_x \max_y xy$ (see~\cref{fig:GDAxy}) and can help empirically in some non-convex-non-concave settings such as training GANs \citep{Lu_Singh_Chen_Chen_Hong_2019}. Alternating GDA can also lead to faster convergence in settings where GDA already converges, namely smooth strongly-convex-strongly-concave problems. Specifically,~\citep{Lee_Cho_Yun_2024} shows that alternating GDA enables taking larger stepsizes, leading to a rate of $\mathcal{O}(\kappa^{1.5}\log1/\epsilon)$, which is faster than the $\Theta(\kappa^2 \log 1/\epsilon)$ fastest possible rate using symmetric stepsizes $\alpha_t = \beta_t$.~\citep{Zhang_Wang_Lessard_Grosse_2022} shows that alternating GDA achieves a $\mathcal{O}(\kappa\log1/\epsilon)$ local convergence rate under the additional assumptions of arbitrarily close initialization and continuous Hessian near optimality. This rate $\mathcal{O}(\kappa\log1/\epsilon)$ is the optimal rate among first-order methods~\citep{Ibrahim_Azizian_Gidel_Mitliagkas_2020}, and our slingshot stepsizes achieve this rate globally and without any additional such assumptions (see~\cref{tab:scsc}), and moreover enable convergence in further settings (see~\cref{tab:bilinear,tab:cc}). 

\emph{Two-timescale GDA} uses constant but asymmetric stepsizes (i.e., $\alpha\neq \beta$). This algorithm was proposed for training Generative Adversarial Networks (GANs) where the discriminator and generator (a.k.a.\ the $x$ and $y$ variables) train at different speeds in practice \citep{Heusel_Ramsauer_Unterthiner_Nessler_Hochreiter_2017}, and is popular in settings with different structural assumptions on the minimization objective $f(\cdot,y)$ and the maximization objective $f(x,\cdot)$. 
An influential result of \citep{Lin_Jin_Jordan_2020} shows that for certain non-convex-concave settings and certain notions of convergence, two-timescale GDA can converge, namely if $f(\cdot,y)$ is Lipschitz (but potentially non-convex), $f(x,\cdot)$ is concave, $f$ is smooth, and the domain of $y$ is compact. Note that changing the notion of convergence is essential since the iterates of two-timescale GDA diverge exponentially fast from the solution of unconstrained bilinear problems due to the time-invariance and positivity of the stepsize schedule, see~\cref{ssec:warmup:diverge} for a proof. 
Refined convergence rates for two-timescale GDA in this non-convex-concave setting are an active area of research, see e.g.,~\citep{Li_Farnia_Das_Jadbabaie_2022, Lin_Jin_Jordan_2025} and the references within. More broadly beyond standard GDA, two-timescale dynamics have been useful in a variety of settings including stochastic optimization \citep{Borkar_1997,Lin_Jin_Jordan_2020}, mean field games \citep{Lu_2023}, extragradient-type algorithms \citep{Chae_Kim_Kim_2023}, and more.

 \paragraph{Negative parameters in optimization.} Momentum is a powerful tool in convex optimization~\citep{polyak1964some,nesterov1983convex}, but the direct use of (positive) momentum fails to help GDA converge on simple problems like $\min_x \max_y xy$. An interesting idea of~\citep{gidel2019negative} is to use \emph{negative momentum}: this can be helpful empirically for training GANs, has interpretations via optimism~\citep{mokhtari2020unified}, and leads to convergence in bilinear~\citep{gidel2019negative} and strongly-convex-strongly-concave settings~\citep{Zhang_Grosse}, albeit at suboptimal convergence rates~\citep{Zhang_Wang_2021}, see~\cref{tab:bilinear,tab:scsc}. Negative momentum bears some intuitive similarities to our slingshot stepsizes in that both involve some sort of backward movement, and is therefore perhaps the closest existing algorithm to ours. However, negative momentum is fundamentally different for multiple reasons: it changes GDA by adding internal dynamics, it has only been used with positive stepsizes, and its use with symmetric GDA (as in~\citep{Zhang_Grosse,Zhang_Wang_2021}) makes the algorithm symmetric and time-invariant. Even visually, the trajectories of negative momentum and negative stepsizes are qualitatively completely different (see~\cref{fig:diff-behavior}).\footnote{Moreover, until very recently, it was an open question whether negative momentum could converge generally in convex-concave settings or could accelerate in any structured settings where convergence was already known; in contrast, this paper shows that all of this is possible using slingshot stepsizes. We resolved these questions in~\citep{shugart2026negative}, but the underlying mechanism appears to be quite different than for the slingshot stepsizes.}

In standard (non-min-max) optimization, negative parameters have barely been investigated. The only references we are aware of are \citep{Dai_1999, Shea_Schmidt_2024}, who allow for (but do not necessarily use) both positive and negative stepsizes in their line search in order to exploit negative curvature directions in non-convex problems. This usage of negative stepsizes is for an entirely different reason than in the present paper, and moreover we note that their line search would \emph{not} use negative stepsizes for convex settings.

\paragraph{First-order methods in min-max optimization.}
As mentioned above, although the stepsize schedules in alternating GDA and two-timescale GDA can be helpful in certain ways, they do not fix the non-convergence issues of GDA in simple settings like unconstrained bilinear problems. Therefore much of the literature on first-order algorithms for min-max optimization has focused on modifying GDA beyond stepsizes to enable convergence.

The first proposed algorithm for solving general convex-concave min-max problems was the \emph{extragradient method} (EGD) \citep{korpelevich1976extragradient}. In each iteration (from $z_t$ to $z_{t+1}$), EGD performs a step of GDA from the current iterate $z_t$ to create an auxiliary iterate $z_t'$ (look-ahead step), and then uses the gradient at $z_t'$ as the update direction from $z_t$ (movement step). Asymptotic convergence for EGD was first shown by~\citep{korpelevich1976extragradient}, and substantial work since has focused on non-asymptotic rates. For convex-concave objectives, \citep{Solodov_Svaiter_1999} proved a $\mathcal{O}(1/\epsilon)$ rate for best-iterate convergence in squared gradient norm; a matching last-iterate rate was first shown under the additional assumption of Lipschitz Hessians~\citep{Golowich_Pattathil_Daskalakis_Ozdaglar_2020} and was later strengthened to only require standard first-order smoothness assumptions~\citep{gorbunov2022extragradient}. Matching $\mathcal{O}(1/\epsilon)$ rates have also been shown for the ergodic iterate for the (restricted) gap function~\citep{Nemirovski_2004,Golowich_Pattathil_Daskalakis_Ozdaglar_2020,Mokhtari_Ozdaglar_Pattathil_2020}. For strongly-convex-strongly-concave problems,~\citep{TSENG1995237} showed an optimal $\mathcal{O}(\kappa\log1/\epsilon)$ convergence rate for EGD. Our results match these $\mathcal{O}(1/\epsilon)$ and $\mathcal{O}(\kappa\log1/\epsilon)$ rates (for the best and last iterates, respectively) by using GDA in its original form, see~\cref{tab:cc,tab:scsc}.

A related algorithm is the \emph{past-extragradient method}, nowadays commonly known as~\emph{optimistic GDA} (OGDA) \citep{Popov_1980}. OGDA performs a look-ahead step, similar to EGD, however this look-ahead step is computed differently by using prior gradient information. OGDA has several interpretations and has been studied through the lens of no-regret algorithms~\citep{Rakhlin_Sridharan_2013a, Rakhlin_Sridharan_2013b}, as an approximation of the proximal point method \citep{Jiang_Mokhtari_2024}, and as a variation on Nesterov-style momentum for convex optimization where ``momentum'' arises from the difference of consecutive gradients rather than the difference of consecutive iterates~\citep{mokhtari2020unified}. Last-iterate convergence results for squared gradient norm have been shown for OGDA, achieving the optimal rate $\mathcal{O}(\kappa \log1/\epsilon)$ for strongly-convex-strongly-concave problems \citep{mokhtari2020unified} and $\mathcal{O}(1/\epsilon)$ for smooth convex-concave problems \citep{ Gorbunov_Taylor_Gidel, cai2022tight}. Our results match these $\mathcal{O}(1/\epsilon)$ and $\mathcal{O}(\kappa\log1/\epsilon)$ rates (for the best and last iterates, respectively) by using GDA in its original form, see~\cref{tab:cc,tab:scsc}.

Recent work achieves accelerated convergence rates in the smooth convex-concave setting, improving $\mathcal{O}(1/\eps)$ to the optimal rate $\mathcal{O}(1/\sqrt{\epsilon})$ for squared gradient norm~\citep{yoon2021accelerated,Lee_Kim_2021,Tran_Dinh_Luo_2021,Yoon_Kim_Suh_Ryu_2024}. These accelerated methods primarily rely on combining either EGD or OGDA with anchoring, a mechanism similar to Halpern iterations for root finding, where after each update, the current iterate is moved slightly towards the initial iterate \citep{yoon2021accelerated,Lee_Kim_2021,Tran_Dinh_Luo_2021}. Alternative accelerated algorithms can also be derived by applying a certain ``H-duality'' operation to those algorithms~\citep{Yoon_Kim_Suh_Ryu_2024}.

\clearpage

\begin{table}[!t]
\footnotesize
\centering
\caption{\footnotesize Complexity of GDA variants for unconstrained \textbf{\emph{bilinear}} problems $\min_x \max_y x^{\top} \bm{B} y$. Here $\kappa = \sigma_{\max}^2(\bm B) / \sigma_{\min}^2(\bm B)$ where $\sigma_{\min}$ denotes the smallest non-zero singular value, and $\eps$ denotes the target accuracy for distance to a solution (rates are identical for $\|\nabla f\|^2$ up to a logarithm). Both~\citep{azizian2020accelerating} and~\cref{thm:bilinear-ub} exactly achieve the accelerated rate $\mathcal{O}(\sqrt{\kappa}\log 1/\epsilon)$ which is information-theoretically optimal among arbitrary first-order methods.}
\begin{tabular}{c|c|c|c}
& \textbf{Algorithm} & \textbf{Rate} & \textbf{Reference} \\ \Xhline{3\arrayrulewidth}
\multirow{3}{*}{GDA} 
    & Simultaneous GDA & No convergence & Folklore, e.g.,~\citep{korpelevich1976extragradient} \\ 
    & Alternating GDA & No convergence & \citep{gidel2019negative} \\ 
    & \textbf{Slingshot GDA}   & $\boldsymbol{\mathcal{O}(\sqrt{\kappa}\log(1/\epsilon))}$ & \cref{thm:bilinear-ub} \\ \hline
\multirow{4}{*}{Additional dynamics} 
    & Negative Momentum & $\mathcal{O}(\kappa\log(1/\epsilon))$ & \citep{gidel2019negative} \\
    & Optimistic GDA        & $\mathcal{O}(\kappa\log(1/\epsilon))$ & \citep{Daskalakis_Ilyas_Syrgkanis_Zeng_2018} \\ 
    & Extragradient         & $\mathcal{O}(\kappa\log(1/\epsilon))$ & \citep{azizian2020accelerating} \\
    & Heavy Ball on Hamiltonian & $\mathcal{O}(\sqrt{\kappa}\log(1/\epsilon))$ & \citep{azizian2020accelerating} 
\end{tabular}

\label{tab:bilinear}
\end{table}

\begin{table}[!t]
\footnotesize
\centering
\caption{\footnotesize Complexity of GDA variants for $\|\nabla f\|^2 \leq \epsilon$ for unconstrained smooth \textbf{\textit{convex-concave}} problems. Here the smoothness $L=1$; multiply all results by $L^2$ for general $L$.  A lower bound of $\Omega(1/\sqrt{\epsilon})$ is shown in~\citep{yoon2021accelerated}. Here our result applies to the best iterate, see the discussion around \cref{thm:gd-cc}.
}
\begin{tabular}{c|c|c|c}
 & \textbf{Algorithm} & \textbf{Rate} & \textbf{Reference} \\ \Xhline{3\arrayrulewidth}
\multirow{3}{*}{GDA} 
    & \multicolumn{1}{c|}{Simultaneous GDA} & \multicolumn{1}{c|}{No convergence} & Folklore, e.g.,~\citep{korpelevich1976extragradient} \\ 
    & \multicolumn{1}{c|}{Alternating GDA} & \multicolumn{1}{c|}{No convergence} & \citep{gidel2019negative} \\ 
    & \multicolumn{1}{c|}{\textbf{Slingshot GDA}}   & \multicolumn{1}{c|}{$\boldsymbol{\mathcal{O}(1/\epsilon)}$} & \cref{thm:gd-cc} \\ \hline
\multirow{3}{*}{Additional dynamics} 
    & \multicolumn{1}{c|}{Extragradient}         & \multicolumn{1}{c|}{$\mathcal{O}(1/\epsilon)$} 
        & \citep{Solodov_Svaiter_1999,gorbunov2022extragradient} \\ 
    & \multicolumn{1}{c|}{Optimistic GDA}          & \multicolumn{1}{c|}{$\mathcal{O}(1/\epsilon)$} 
        & \citep{Gorbunov_Taylor_Gidel,cai2022tight} \\ 
   & \multicolumn{1}{c|}{Extra anchored gradient/variants} & \multicolumn{1}{c|}{$\mathcal{O}(1/\sqrt{\epsilon})$} 
        & \citep{yoon2021accelerated} \\ 
\end{tabular}

\label{tab:cc}
\end{table}

\begin{table}[!t]
\footnotesize
\centering
\caption{\footnotesize Complexity of GDA variants for unconstrained $L$-smooth $\mu$-\textbf{\textit{strongly-convex-strongly-concave}} problems. Here $\kappa := L/\mu$
and $\eps$ denotes the target accuracy for distance to optimum (rates are identical for $\|\nabla f\|^2$ up to a logarithm). This is the only setting in the paper for which GDA converges with standard stepsizes; the slingshot stepsizes accelerate convergence to match the information-theoretically optimal rate of $\Omega(\kappa \log 1/\eps)$ from~\citep{Ibrahim_Azizian_Gidel_Mitliagkas_2020, Zhang_Hong_Zhang_2022}.}
\begin{tabular}{c|c|c|c}
 & \textbf{Algorithm} & \textbf{Rate} & \textbf{Reference} \\ \Xhline{3\arrayrulewidth}
\multirow{3}{*}{GDA} 
    & Simultaneous GDA  & $\mathcal{O}(\kappa^2\log 1/\epsilon)$     & \citep{Liang_Stokes_2019} \\ 
    & Alternating GDA   & $\mathcal{O}(\kappa^{1.5}\log 1/\epsilon)$ & \citep{Lee_Cho_Yun_2024} \\ 
    & \textbf{Slingshot GDA}     & $\boldsymbol{\mathcal{O}(\kappa\log 1/\epsilon)}$       & \cref{thm:gd-scsc} \\ \hline
\multirow{3}{*}{Additional dynamics} 
    & Negative Momentum & $\mathcal{O}(\kappa^{1.5}\log 1/\epsilon)$ & \citep{Zhang_Grosse} \\ 
    & Extragradient     & $\mathcal{O}(\kappa\log 1/\epsilon)$       & \citep{TSENG1995237} \\ 
    & Optimistic GDA    & $\mathcal{O}(\kappa\log 1/\epsilon)$       & \citep{mokhtari2020unified} \\ 
\end{tabular}

\label{tab:scsc}
\end{table}

\clearpage

\paragraph{Performance estimation.}
Many recent advances in the design and analysis of first-order min-max algorithms have been guided by computer-assisted analyses. This line of work is based on \textit{performance estimation problems} (PEP), pioneered by \citep{drori2014performance}. The overarching idea is that the search for a proof of an algorithm's convergence can be recast as a certain semidefinite program. These techniques were originally proposed for convex optimization and recently have been adapted to min-max problems in order to design new algorithms and prove tighter convergence rates for existing algorithms~\citep{Ryu_Taylor_Bergeling_Giselsson_2020, Zhang_Grosse, Yoon_Kim_Suh_Ryu_2024, gorbunov2022extragradient, Gorbunov_Taylor_Gidel, cai2022tight, Zhang_Wang_Lessard_Grosse_2022, Lee_Cho_Yun_2024}. Our analysis of bilinear and quadratic problems uses only simpler spectral-type arguments, but our analysis for convex-concave problems is inspired by PEP techniques, see~\cref{ssec:2step} for a discussion of the similarities and differences.

\paragraph{Min-max problems with structure.}
Specialized techniques can enable faster convergence rates for problems with additional structure. For example,~\citep{azizian2020accelerating} proposed a specialized algorithm specifically for bilinear problems that achieves the optimal accelerated rate. Interestingly, our slingshot stepsizes enable matching this optimal rate while using GDA in its original form, see~\cref{tab:bilinear}.
Faster convergence has also been shown under other structural assumptions, for example smooth strongly-convex-strongly-concave problems where the smoothness and strong convexity/concavity parameters are different for $x$ and $y$~\citep{Zhang_Hong_Zhang_2022, Lin_Jin_Jordan_2020_opt, Jin_Sidford_Tian_2022, Borodich_Kormakov_Kovalev_Beznosikov_Gasnikov_2023}, and bilinearly coupled problems of the form $f(x,y) = f_x(x) - f_y(y) + x^{\top} \bm B y$~\citep{Chambolle_Pock_2011,Gabay_Mercier_1976,Glowinski_Marroco_1975,Kovalev_Borodich_2024,boyd2011distributed}, and it is an interesting question if the slingshot stepsizes can similarly lead to improved rates under such assumptions, see the discussion in~\cref{sec:discussion}.

\paragraph{Stepsize optimization in minimization problems.} An orthogonal line of work aims to accelerate gradient descent for (non-min-max) convex optimization by judiciously choosing stepsizes. Classically, in 1953, Young showed that for the special case of convex quadratics, using stepsizes based on the roots of Chebyshev polynomials enables accelerating GD~\citep{young53}. Starting with~\citep{altschuler2018greed}, a recent line of work has shown that stepsize-based acceleration is possible for convex optimization beyond quadratics, see~\citep{altparrilo26survey} for a recent overview of this extremely active area of research. 
For the purpose of the present paper, the relevance is that all of those results are for (non-min-max) convex optimization; the present paper shows for the first time that these types of ideas can similarly accelerate GDA in min-max optimization. Specifically, in~\cref{sec:linear} we show how the positive/negative cancellation in the slingshot stepsize schedule reduces accelerating GDA for min-max problems with linear $\nabla f$, to accelerating GD for convex quadratic minimization. This leads to analogously accelerated rates using stepsizes based on roots of Chebyshev polynomials. It seems potentially plausible that in convex-concave settings, the slingshot stepsize schedule could also be accelerated by combining it with ideas from the silver stepsize schedule~\citep{AP23a,AP23b}, which enables accelerating GD to the rate $\mathcal{O}(\eps^{-\log_{1+\sqrt{2}} 2}) \approx \mathcal{O}(\eps^{-0.7864})$ for (non-quadratic) smooth convex optimization.

\paragraph*{Negative and complex stepsizes in different areas of mathematics.} Classically, negative and complex stepsizes arise in two different areas of mathematics for entirely different purposes: complex stepsizes arise in numerical differentiation~\citep{squire1998using,lyness1967numerical,martins2003complex} (in order to improve numerical stability), and negative or complex stepsizes arise in splitting methods for simulating differential equations (in order to achieve methods of order greater than $2$), see for example the survey~\citep{Blanes_Casas_Murua_2024} and the references within. Although these stepsizes arise for entirely different reasons than in our setting of min-max optimization, a common theme across these problems is that the algorithms are obtained by choosing parameters (e.g., stepsizes) so that low-order coefficients vanish in an associated Taylor expansion. This gives rise to a system of polynomial equations, one per coefficient.  Negative or complex stepsizes can then arise when the solutions require negative or complex roots.

\section{Preliminaries}\label{sec:prelim}

\subsection{Min-max optimization}\label{ssec:prelim:min-max}

In this paper we consider unconstrained min-max optimization problems, i.e., problems of the form
\begin{align}\label{eq:prelim}
	\min_{x \in \R^{d_x}} \max_{y \in \R^{d_y}} f(x,y)\,.
\end{align}
For simplicity of exposition, we consider finite-dimensional spaces $\R^{d_x}$ and $\R^{d_y}$, although our results extend to infinite-dimensional Hilbert space $\ell_2$. We focus on the fundamental setting where $f$ is $L$-smooth (i.e., $\nabla f$ is $L$-Lipschitz) and convex-concave (i.e., $f(\cdot,y)$ is convex for any $y$, and $f(x,\cdot)$ is concave for any $x$). 
In the paper we illustrate the slingshot stepsize schedule for several settings (bilinear $f$, quadratic $f$, convex-concave $f$, and strongly-convex-strongly-concave $f$), and for the convenience of the reader, we recall these standard definitions in the corresponding sections.

\paragraph*{Solutions and approximate solutions.} A saddle point of~\eqref{eq:prelim} is a point $z^* = (x^*,y^*)$ satisfying
\begin{align*}
	f(x^*,y) \leq f(x^*,y^*) \leq f(x,y^*)\,, \qquad \forall x,y\,.
\end{align*}
Intuitively, such a point is an equilibrium in the sense that the ``$x$-player'' cannot improve the objective by deviating from the strategy $x^*$ if the $y$-player does not deviate from $y^*$, and vice versa.

\par A related solution concept is a stationary point, which is a point $z^* = (x^*,y^*)$ satisfying
\begin{align*}
	\| \nabla f(z^*) \| = 0\,.
\end{align*}
In general, the saddle-point condition is more stringent than the stationary condition. However, for unconstrained convex-concave problems, the two notions are equivalent and can be respectively interpreted as global and local characterizations of optimality. Hence we use these terms interchangeably. Throughout we assume that such a solution point exists; this precludes degenerate min-max problems such as $f(x,y) = x$ for which the value of~\eqref{eq:prelim} is not finite and never achieved.

\par When numerically computing solutions, as is the goal in this paper, one can only hope to find approximate solutions $z = (x,y)$. Standard notions of approximate solutions are proximity to a stationary point $\|z - z^*\| \leq \eps$ and approximate stationarity $\|\nabla f(z)\| \leq \eps$. Note that the results of the former type imply results of the latter type under smoothness (since $\|\nabla f(z)\| = \| \nabla f(z) -  \nabla f(z^*)\| \leq L \|z- z^*\|$), however the latter is possible in settings where the former is not (e.g., see~\cref{rem:quad-metrics}).

\subsection{Failure of GDA with standard stepsizes}\label{ssec:warmup:diverge}
We motivate our algorithm by first recalling the folklore explanation for why $\GDA$ does not converge. Consider even the simple bilinear min-max problem
\begin{align}
	\min_{x \in \R} \max_{y \in \R} \, xy \,.
	\label{eq:diverge-xy}
\end{align}
The continuous-time GDA flow 
\begin{align*}
    \begin{bmatrix}
        \dot{x}(t) \\ \dot{y}(t)
    \end{bmatrix}
    =
    \begin{bmatrix}
        -\nabla_x f(x(t),y(t)) \\ \nabla_y f(x(t),y(t))
    \end{bmatrix}
    =
    \begin{bmatrix}
        - y(t) \\ x(t)
    \end{bmatrix}
\end{align*}
fails to converge since $x^2 + y^2$ is a conserved quantity. This results in cycling in circles around the unique saddle-point solution $(x^*,y^*) = (0,0)$. See the flow lines in~\cref{fig:GDAxy}.

\par This failure extends to discretizations of these continuous-time dynamics. Specifically, the following lemma shows that for standard types of stepsizes, GDA fails to converge in the sense that its iterates remain bounded away from the unique saddle-point solution $(x^*,y^*) = (0,0)$. This means that GDA also fails to converge in the sense of approximate stationarity.

\begin{lemma}[Failure of GDA with standard stepsizes]\label{lem:diverge-toy}
	GDA does not converge on the min-max problem~\eqref{eq:diverge-xy} for stepsize schedules $\alpha_t, \beta_t \in \R$ that fall under any of the following categories:
	\begin{itemize}
		\item [(i)] \underline{Constant stepsizes:} $\alpha_t \equiv \alpha$ and $\beta_t \equiv \beta$ for all iterations $t$, for some $\alpha,\beta$.
		\item [(ii)] \underline{Non-negative stepsizes:} $\alpha_t, \beta_t \geq 0$ for all iterations $t$.
		\item [(iii)] \underline{Symmetric stepsizes:} $\alpha_t = \beta_t$ for all iterations $t$.
	\end{itemize}
\end{lemma}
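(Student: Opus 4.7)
Write the GDA update on~\eqref{eq:diverge-xy} in matrix form as $z_{t+1} = M_t z_t$, where $z_t = (x_t, y_t)^{\top}$ and $M_t = \begin{bmatrix} 1 & -\alpha_t \\ \beta_t & 1 \end{bmatrix}$, and record the two identities that drive the whole argument: $\tr(M_t) = 2$ and $\det(M_t) = 1 + \alpha_t \beta_t$. Set $P_t := M_{t-1} \cdots M_0$; since the unique saddle point is the origin, ``does not converge'' amounts to exhibiting some nonzero $z_0$ for which $P_t z_0 \not\to 0$.

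Cases (ii) and (iii) I would handle uniformly by a determinant argument, using that both assumptions force $\alpha_t \beta_t \geq 0$---obviously in (ii), and via $\alpha_t \beta_t = \alpha_t^2$ in (iii). Thus $\det(M_t) \geq 1$, so $|\det(P_t)| \geq 1$ for every $t$, and for a $2 \times 2$ matrix this yields $\sigma_{\max}(P_t) \geq \sqrt{\sigma_{\max}(P_t)\,\sigma_{\min}(P_t)} = \sqrt{|\det(P_t)|} \geq 1$. If instead $P_t z \to 0$ for every $z$, applying this to the standard basis vectors would force both columns of $P_t$---and hence $\|P_t\|_{\mathrm{op}}$---to tend to $0$, contradicting $\|P_t\|_{\mathrm{op}} \geq \sigma_{\max}(P_t) \geq 1$. (Case (iii) also admits a shorter one-line argument: with $\alpha_t = \beta_t$, $M_t$ is $\sqrt{1 + \alpha_t^2}$ times a planar rotation, so $\|z_{t+1}\|^2 = (1+\alpha_t^2)\|z_t\|^2 \geq \|z_t\|^2$, and any $z_0 \neq 0$ stays bounded away from the origin.)

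Case (i) falls outside the determinant bound when $\alpha\beta < 0$, so I would switch to a direct spectral analysis of the constant matrix $M$. The characteristic polynomial $\lambda^2 - 2\lambda + (1+\alpha\beta)$ yields $\lambda_\pm = 1 \pm \sqrt{-\alpha\beta}$, and because $\lambda_+ + \lambda_- = 2$ the triangle inequality forces the spectral radius $\rho(M) \geq 1$, with equality only when $\lambda_+ = \lambda_- = 1$, i.e., $\alpha\beta = 0$. If $\rho(M) > 1$, iterates grow geometrically from any $z_0$ outside the (at most one-dimensional) stable eigenspace; if $\rho(M) = 1$ and $(\alpha, \beta) \neq (0,0)$, $M$ is a nontrivial Jordan block for eigenvalue $1$ and $M^t z_0$ grows linearly for generic $z_0$; and if $\alpha = \beta = 0$, then $M = I$ and every $z_0 \neq 0$ is stationary.

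The main (mild) subtlety is that the determinant argument in cases (ii)--(iii) controls only the \emph{operator norm} of $P_t$ and not any specific trajectory, so the step from ``$\|P_t\|_{\mathrm{op}} \geq 1$'' to ``some trajectory does not converge'' requires the finite-dimensional equivalence between pointwise and operator-norm convergence of linear maps, as used above. Beyond this, the whole proof reduces to a handful of spectral observations about a $2 \times 2$ matrix with trace $2$.
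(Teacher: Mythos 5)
Your proof is correct and follows essentially the same route as the paper: the identical determinant bound $\det(M_t) = 1 + \alpha_t\beta_t \geq 1$ for cases (ii) and (iii), and a spectral-radius computation for the constant-stepsize case (i). The only differences are cosmetic---you unpack the ``basic fact'' that $\rho(M)\ge 1$ prevents convergence into explicit sub-cases ($\rho>1$, nontrivial Jordan block at $\rho=1$, and $M=I$), and you make explicit the step from $\|P_t\|\ge 1$ to the existence of a non-convergent trajectory, both of which the paper leaves implicit.
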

\begin{proof}
	For this problem, the dynamics of $\GDA$ simplify to the linear dynamical system
	\begin{align}
		\begin{bmatrix}x_{t+1}\\ y_{t+1}\end{bmatrix} = \bm{U_t} \begin{bmatrix}x_{t}\\ y_{t}\end{bmatrix} \qquad \text{ where } \qquad \bm{U_t} = \begin{bmatrix} 1 & -\alpha_t \\ \beta_t & 1\end{bmatrix}\,.
	\end{align}
	\par (i) For constant stepsizes, $\bm{U_t }\equiv \bm U$ for all $t$. It is a basic fact that such a time-invariant linear system converges from arbitrary initialization if and only if the spectral radius $\rho(\bm U) < 1$. But $\rho(\bm U) \geq 1$ since the eigenvalues of $\bm U$ are $1 \plusminus i \sqrt{\alpha \beta}$ and $\alpha,\beta \in \R$. Hence $\GDA$ is not convergent.
	\par (ii) For non-negative stepsizes, $\det(\bm U_t)= 1+\alpha_t\beta_t\geq 1$. Since the spectral norm is lower bounded by the spectral radius, which is in turn lower bounded by the geometric mean of the eigenvalues, we have $\|\prod_{t=1}^T \bm U_t \| \geq \rho(\prod_{t=1}^T \bm{U_t}) \geq \sqrt{\det(\prod_{t=1}^T \bm{U_t})} = \prod_{t=1}^T \sqrt{ \det( \bm{U_t})}  \geq 1$. Hence $\GDA$ is not convergent. 
	\par (iii) For symmetric stepsizes, $\det(\bm U_t)= 1+\alpha_t^2\geq 1$. Then argue as in (ii).
\end{proof}

In fact, the failure of $\GDA$ with these types of stepsize schedules is \emph{universal}: $\GDA$ diverges with stepsizes that are constant, non-negative, and/or symmetric not only for the toy problem~\eqref{eq:diverge-xy}, but in fact for \emph{any} bilinear min-max game. The proof essentially just follows by diagonalizing: $\GDA$ fails on each coordinate of a general bilinear min-max problem for the same reason that it fails on the univariate example above; details in~\cref{app:ub-diverge}. 

\begin{lemma}[Universal failure of standard $\GDA$]\label{lem:diverge-universal}
	\cref{lem:diverge-toy} holds even when~\eqref{eq:diverge-xy} is replaced by an arbitrary bilinear min-max problem $\min_{x\in \R^{d_x}} \max_{y \in \R^{d_y}} \; x^{\top} \bm B y$, for $\bm{B} \neq \bm 0$. 
\end{lemma}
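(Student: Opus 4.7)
The plan is to reduce to the scalar case~\eqref{eq:diverge-xy} via the singular value decomposition of $\bm B$, and then invoke~\cref{lem:diverge-toy} on a single $2\times 2$ block. Write $\bm B = \bm P \bm \Sigma \bm Q^{\top}$ with $\bm P \in \R^{d_x \times d_x}$ and $\bm Q \in \R^{d_y \times d_y}$ orthogonal, and change coordinates to $\tilde x_t := \bm P^{\top} x_t$ and $\tilde y_t := \bm Q^{\top} y_t$. A short calculation shows the GDA update becomes $\tilde x_{t+1} = \tilde x_t - \alpha_t \bm \Sigma \tilde y_t$ and $\tilde y_{t+1} = \tilde y_t + \beta_t \bm \Sigma^{\top} \tilde x_t$, which decouples coordinatewise since $\bm \Sigma$ is diagonal: for each index $i$ with singular value $\sigma_i$, the pair $(\tilde x_{t,i}, \tilde y_{t,i})$ evolves under the $2\times 2$ matrix $\left[\begin{smallmatrix} 1 & -\alpha_t \sigma_i \\ \beta_t \sigma_i & 1 \end{smallmatrix}\right]$. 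Coordinates of $\tilde x_t,\tilde y_t$ outside the support of $\bm \Sigma$ are frozen under the update and contribute nothing to $\nabla f$.

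The hypothesis $\bm B \neq \bm 0$ guarantees at least one nonzero singular value $\sigma := \sigma_1 > 0$, and the corresponding $2 \times 2$ subsystem is exactly the scalar GDA dynamics from~\cref{lem:diverge-toy} with stepsizes rescaled by $\sigma$. Since each of the three stepsize categories (constant, non-negative, symmetric) is preserved under multiplication by the positive constant $\sigma$, the three short arguments in the proof of~\cref{lem:diverge-toy}---based respectively on the spectral radius of the constant update matrix and on the determinant bound $\det \bm U_t \geq 1$---carry over verbatim to this rescaled subsystem. Thus $(\tilde x_{t,1}, \tilde y_{t,1})$ does not converge to the origin. Choosing an initial point for which $(\tilde x_{0,1}, \tilde y_{0,1}) \neq (0,0)$---for instance $x_0 := \bm P e_1$, $y_0 := \bm 0$---and using orthogonality of $\bm P, \bm Q$ to preserve norms, one lifts this non-convergence back to the original coordinates. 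Since $\|\nabla f(x_t,y_t)\|^2 \geq \sigma^2(\tilde x_{t,1}^2 + \tilde y_{t,1}^2)$ is bounded away from zero (or diverges), GDA fails to converge to any saddle point of $\min_x \max_y x^{\top} \bm B y$.

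\textbf{Main obstacle.} There is no real obstacle: the argument is a routine SVD diagonalization plus the trivial observation that rescaling by a positive scalar preserves each of the three stepsize categories. The one point worth being mildly careful about is that the saddle-point set of a general bilinear problem is $\ker \bm B^{\top} \times \ker \bm B$ rather than the singleton $\{\bm 0\}$, so failure of convergence must be detected on coordinates corresponding to nonzero singular values; selecting the initial condition to be supported on the top singular direction does exactly this, and orthogonality of $\bm P, \bm Q$ ensures the non-convergence statement transfers cleanly between the original and transformed coordinates.
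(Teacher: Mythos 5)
Your proposal takes the same approach as the paper's proof in~\cref{app:ub-diverge}: diagonalize via SVD, restrict attention to the $2\times 2$ block associated with a nonzero singular value $\sigma_1$, and apply the determinant and spectral-radius arguments of \cref{lem:diverge-toy} to that block after rescaling the stepsizes by $\sigma_1$. The observation that each of the three stepsize categories is invariant under multiplication by a positive constant is the right reason the reduction goes through, and the inequality $\|\nabla f(z_t)\|^2\geq\sigma_1^2\big(\tilde x_{t,1}^2+\tilde y_{t,1}^2\big)$ is the correct way to transfer the non-convergence statement past the nontrivial saddle-point set $\Ker(\bm B\T)\times\Ker(\bm B)$.

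However, your specific choice of initial condition is a genuine gap. The arguments in \cref{lem:diverge-toy} establish $\rho(\bm U)\geq 1$ (case (i)) or $\|\prod_t\bm U_t\|\geq 1$ (cases (ii)--(iii)), which imply that \emph{some} initialization produces a trajectory not converging to the saddle point---not that \emph{every} nonzero initialization fails. Your choice $x_0:=\bm P e_1$, $y_0:=\zero$, giving $(\tilde x_{0,1},\tilde y_{0,1})=(1,0)$, is not guaranteed to be a failing one for case (ii). Indeed, on $\min_x\max_y xy$ (take $\sigma_1=1$), the non-negative, time-varying, asymmetric schedule that repeats the period-$5$ block $(\alpha_t,\beta_t)=(0,\tfrac12),(1,0),(1,0),(1,0),(0,1)$ sends $(1,0)\mapsto(-\tfrac12,0)$ after one period; by the odd symmetry $z\mapsto -z$ of the GDA update, the trajectory from $(1,0)$ halves every five steps and converges to the origin, so $\nabla f(z_t)\to 0$ along it. The lemma itself is still respected: the five-step product matrix has eigenvalues $-\tfrac12$ and $-2$, so for instance the initialization $(2,1)$ diverges. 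The fix mirrors what the paper does implicitly: from $\|\prod_t\bm M_{1,t}\|\geq 1$ for all $T$, conclude (by contradiction---if every initialization converged to the origin, the product matrix would tend to zero) that there exists $(\xi_0,\eta_0)\neq(0,0)$ whose $2\times 2$ trajectory does not vanish, and lift \emph{that} initialization to $x_0:=\xi_0\,\bm P e_1$, $y_0:=\eta_0\,\bm Q e_1$. With that substitution, the rest of your argument goes through unchanged.
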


\subsection{Chebyshev polynomials}\label{ssec:prelim:cheby}

Our analysis (only) for min-max problems with linear gradients in~\cref{sec:linear} makes use of classical facts about Chebyshev polynomials. We briefly recall relevant preliminaries here and refer the reader to standard textbooks such as~\citep{rivlin2020chebyshev,mason2002chebyshev} for further background. Throughout we use only Chebyshev polynomials \emph{of the first kind} and therefore drop that modifier for brevity.

\par Let $\cT_n$ denote the degree-$n$ Chebyshev polynomial over the interval $[-1,1]$. This can be defined in several equivalent ways, for example $2^{n-1}$ times the monic degree-$n$ polynomial minimizing $\|p\|_{L^{\infty}[-1,1]}$, or $2^{n-1}$ times the monic polynomial $\prod_{t=0}^{n-1} ( x - x_t)$ with roots $x_t = \cos(\tfrac{2t+1}{2n}\pi)$.
For a compact interval $[a,b]$, the degree-$n$ Chebyshev polynomial is defined via shifting as 
\[
\cT_n^{[a,b]}(\lambda) := \cT_n(\cL^{[a,b]}(\lambda)) \qquad \text{ where } \qquad \cL^{[a,b]}(\lambda) := \frac{2}{b-a} \left( \lambda - \frac{b+a}{2}\right)
\]
denotes the monotone linear map sending $[a,b]$ to $[-1,1]$. 

\par The relevance of Chebyshev polynomials for convex optimization is that the optimal GD stepsize schedules of length $T$ for optimizing $m$-strongly convex, $M$-smooth quadratic functions are (any permutation of) the $T$ inverse roots of $\cT_{T}^{[m,M]}$~\citep{young53}. Our results in~\cref{sec:linear} show how these ideas can also improve GDA for min-max optimization with bilinear or quadratic objectives.

\section{Min-max problems with linear gradients}\label{sec:linear}

In this section we consider min-max problems $\min_x \max_y f(x,y)$ for which $\nabla f$ is linear. 
By using time-varying, asymmetric, and alternately negative stepsizes, we show that the iterates of GDA can be connected to polynomials of $\nabla^2 f$ (which in this setting is constant), and the optimal stepsizes of GDA are connected to the roots of certain extremal polynomials. This allows us to leverage classical techniques from convex quadratic optimization, thereby enabling a simple, explicit analysis of the proposed slingshot stepsize schedule. See~\cref{ssec:plausibility:bilinear} for a high-level overview.

\subsection{Bilinear objectives}\label{ssec:bilinear}

We first consider bilinear problems---a classical counterexample for GDA~\citep{samuelson1949market, korpelevich1976extragradient}.
Since we assume existence of a saddle point $(\tilde{x}, \tilde{y})$, these problems are of the form
\begin{equation}
	\min_{x\in \R^{d_x}} \max_{y \in \R^{d_y}} \; (x-\tilde{x})\T \bm B (y-\tilde{y}) \,.
	\label{eq:bilinear}
\end{equation}
By setting $\nabla f(z^*) = 0$, the saddle points $z^* = (x^*,y^*)$ are characterized by $x^* \in \tilde{x} + \Ker(\bm B^\top)$ and $y^* \in \tilde{y} + \Ker(\bm B)$. We make the standard assumption that $\bm B$ is spectrally bounded, namely all non-zero squared singular values of $\bm B$ lie in some interval $[m,M]$. The spectral upper bound can be interpreted as $M$-smoothness of the Hamiltonian $\Phi(z) = \tfrac{1}{2}\|\nabla f(z)\|^2$, or equivalently $\sqrt{M}$-smoothness of $f$. The spectral lower bound is equivalent to $\Phi$ being $m$-strongly convex; this can be relaxed, see~\cref{ssec:quad} for how the slingshot stepsize schedule still obtains the optimal convergence rate in that setting (i.e., where $f$ is any smooth bilinear function).

For such bilinear problems, we propose the following version of the slingshot stepsize schedule. Notice that the stepsizes are asymmetric, time-varying, and alternately positive or negative.

\begin{defin}[Slingshot stepsize schedules for bilinear min-max optimization]\label{def:steps-bilinear}
	For any even number of iterations $2T$ and any spectral bounds $0 < m \leq M < \infty$ on the bilinear coupling, the slingshot stepsize schedule is 
	\begin{align*}
		\alpha_{2t} = -\beta_{2t} = -\alpha_{2t+1} = \beta_{2t+1} = h_t\,, \qquad  t \in \{0,1,...,T-1\}\,,
	\end{align*}
	where $\{h_t\}_{t=0}^{T-1}$ are any permutation of $\{r_t^{-1/2}\}_{t=0}^{T-1}$, where 
	\begin{align*}
		r_t := \frac{M+m}{2} + \frac{M-m}{2} \cos\left( \frac{2t+1}{2T}\pi\right)\,, \qquad t \in \{0,1,...,T-1\}\,,
	\end{align*}
	are the $T$ roots of the Chebyshev polynomial $\cT_T^{[m,M]}$.
\end{defin}

The permutation invariance arises because the updates of GDA are commutative in this setting, at least in exact arithmetic, see~\cref{ssec:linear-alternative}. The relevance of $r_t$ (and hence the magnitudes of the stepsizes $h_t$) is that they are the $T$ roots of the solution to the following extremal polynomial problem, which as our analysis below shows, governs the fastest possible convergence rate of GDA. Below let $\cP_T$ denote the space of degree-$T$ polynomials $p$ satisfying $p(0) = 1$.

\begin{lemma}[Extremal polynomial lemma for bilinear min-max problems]\label{lem:extremal-bilinear}
	Let $T \in \N$, $0 < m < M < \infty$, and $\kappa := M/m$. Then 
	\begin{align*}
		\min_{p \in \cP_T} \max_{\lambda \in [m,M]} |p(\lambda)|
	\end{align*}
	has the unique optimal solution 
	\begin{align*}
	p(\lambda) = \frac{\cT_T^{[m,M]}(\lambda)}{\cT_T^{[m,M]}(0)} = \prod_{t=0}^{T-1} (1 - \lambda / r_t)
	\end{align*}
	and corresponding value
	\begin{align*}
		R_T
		:= \frac{1}{\abs{\cT_T^{[m,M]}(0)}} 
		=  \frac{2(\sqrt{\kappa} + 1)^T (\sqrt{\kappa} - 1)^T}{(\sqrt{\kappa}+1)^{2T} + (\sqrt{\kappa}-1)^{2T}}
		\approx \left(\frac{\sqrt{\kappa} - 1}{\sqrt{\kappa} + 1} \right)^T\,.
	\end{align*}
\end{lemma}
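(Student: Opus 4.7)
The plan is to reduce the problem to the classical minimax property of Chebyshev polynomials on $[-1,1]$ via a change of variables, and then apply the standard equioscillation argument. I would then read off the value and the roots by direct computation using the $\cosh$-parametrization of $\cT_T$ outside $[-1,1]$.

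First, I would pull the problem back through $\cL := \cL^{[m,M]}$, which bijects $[m,M]$ with $[-1,1]$. Setting $q(\mu) := p(\cL^{-1}(\mu))$ is a degree-preserving bijection on polynomials that preserves the sup norm over the relevant intervals, and the constraint $p(0) = 1$ becomes $q(c) = 1$ with $c := \cL(0) = -(M+m)/(M-m)$; note $|c| > 1$ since $m,M > 0$. So it suffices to show that
\[
	\min_{\substack{\deg q \leq T \\ q(c)=1}} \; \max_{\mu \in [-1,1]} |q(\mu)|
\]
is uniquely attained at $q^\star(\mu) = \cT_T(\mu)/\cT_T(c)$ with value $1/|\cT_T(c)|$.

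Next I would run the standard equioscillation argument. Recall $\cT_T$ hits $\pm 1$ alternately at the $T+1$ extremal points $\mu_k = \cos(k\pi/T)$, $k=0,\dots,T$. Suppose toward contradiction some admissible $q$ achieves $\|q\|_{L^\infty[-1,1]} < 1/|\cT_T(c)|$. Then $\Delta := q^\star - q$ is a polynomial of degree at most $T$ that vanishes at $c$ and whose sign at each $\mu_k$ agrees with $q^\star(\mu_k) = (-1)^k/\cT_T(c)$ (whose magnitude strictly dominates $|q(\mu_k)|$). Hence $\Delta$ alternates sign across consecutive $\mu_k$, giving $T$ zeros in $[-1,1]$, which together with the zero at $c \notin [-1,1]$ forces $\Delta \equiv 0$, a contradiction. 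Uniqueness at the boundary $\|q\|_{\infty} = 1/|\cT_T(c)|$ follows by a convexity/averaging argument: any convex combination of two optimizers is again optimal, and then the equioscillation count applies to it.

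Finally I would read off the explicit form. Factoring $\cT_T(\mu) = 2^{T-1}\prod_{t=0}^{T-1}(\mu - \cos((2t+1)\pi/(2T)))$ and pulling back via $\cL^{-1}$ identifies the roots of $q^\star$ (and hence of $p$) as precisely the $r_t$ in the statement, and the normalization $p(0) = 1$ gives the product form $p(\lambda) = \prod_t(1 - \lambda/r_t)$. To evaluate $R_T = 1/|\cT_T(c)|$, I would use $\cT_T(\cosh\theta) = \cosh(T\theta)$ for $|\cosh\theta| \geq 1$. Writing $|c| = (\kappa+1)/(\kappa-1)$ and solving $\cosh\theta = (\kappa+1)/(\kappa-1)$ yields $e^{\pm\theta} = (\sqrt{\kappa}\pm 1)/(\sqrt{\kappa}\mp 1)$, so
\[
	|\cT_T(c)| \;=\; \cosh(T\theta) \;=\; \frac{(\sqrt{\kappa}+1)^{2T} + (\sqrt{\kappa}-1)^{2T}}{2(\sqrt{\kappa}+1)^T(\sqrt{\kappa}-1)^T},
\]
whose reciprocal matches the stated $R_T$; the asymptotic $((\sqrt{\kappa}-1)/(\sqrt{\kappa}+1))^T$ is immediate by dropping the subdominant term.

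The main obstacle is mostly bookkeeping: running the equioscillation count cleanly, including the uniqueness step at the boundary $\|q\|_\infty = R_T$, which requires a small perturbation/convexity argument rather than the strict-inequality version above. Everything else is routine Chebyshev algebra and amounts to change-of-variables and the $\cosh$-evaluation.
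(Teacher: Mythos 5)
The paper does not actually supply a proof of this lemma; it treats it as a classical fact and points the reader to references (\citep[\S2]{d2021acceleration}, \citep[\S2]{altschuler2018greed}, and ultimately Young's work~\citep{young53}). Your argument is a correct, self-contained version of the standard derivation: the affine change of variables $\cL^{[m,M]}$ reduces the problem to the classical Chebyshev minimax problem on $[-1,1]$ subject to $q(c)=1$ with $|c|>1$; the equioscillation/zero-counting argument then gives the strict lower bound and identifies $\cT_T/\cT_T(c)$ as the extremal polynomial; and the $\cosh$-parametrization evaluates $\cT_T(c)$ to the stated expression in $\kappa$. Your zero-count is right: $T$ sign changes from the alternating signs at the $T+1$ Chebyshev extrema plus one more zero at $c\notin[-1,1]$ kills any degree-$\le T$ difference polynomial. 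One small remark: you correctly write the value as $1/|\cT_T(c)|$; the paper's $1/\cT_T^{[m,M]}(0)$ is only positive up to a factor $(-1)^T$, so the absolute value you include is the careful version (the paper's explicit formula in $\kappa$ is of course the positive quantity). You are also right to flag that the uniqueness step at the boundary $\|q\|_\infty = R_T$ needs a little extra care beyond the strict-inequality equioscillation argument — e.g.\ counting a double zero (or a tangential touch) when $\Delta$ happens to vanish at an interior extremum, or running the argument on a convex combination of two purported minimizers — but this is the standard wrinkle in the classical proof and does not affect the conclusion.
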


\cref{lem:extremal-bilinear} follows from the extremal definition of Chebyshev polynomials in~\cref{ssec:prelim:cheby}, see e.g.,~\citep[\S2]{d2021acceleration} or~\citep[\S2]{altschuler2018greed} for details. \cref{lem:extremal-bilinear} classically arises in convex optimization since the roots $r_t$ are the optimal inverse stepsizes for GD for $m$-strongly convex, $M$-smooth quadratic minimization, and $R_T$ is the optimal convergence rate for $T$ steps of any first-order optimization algorithm~\citep{young53}. 

We show that the slingshot stepsize schedule in~\cref{def:steps-bilinear} makes GDA converge. As described in~\cref{ssec:plausibility:bilinear}, using slingshot stepsizes with constant magnitudes $h_t$ is sufficient for exponentially fast convergence, and using the Chebyshev stepsize magnitudes in~\cref{def:steps-bilinear} accelerates the convergence to the optimal rate. For simplicity we state the result using distance as the performance criterion; this extends identically to convergence in gradient norm by either re-doing the same analysis or, more simply, by using the fact that distance and gradient norm are equivalent (up to a multiplicative factor depending only on $m$ and $M$) and thus switching these criteria does not affect the optimal asymptotic rate $\lim_{T \to \infty} R_T^{1/T}$.

\begin{theorem}[GDA converges for bilinear min-max optimization]\label{thm:bilinear-ub}
		Consider any integer $T$, any dimensions $d_x, d_y$, any initialization $z_0 = (x_0,y_0) \in \R^{d_x} \times \R^{d_y}$, and any bilinear min-max problem~\eqref{eq:bilinear} with spectral bounds $0 < m \leq M < \infty$. Using the slingshot stepsize schedule in~\cref{def:steps-bilinear}, $\GDA$ converges to a saddle point $z^*$ at rate
	\begin{equation}
		\norm{z_{2T} - z^*} \leq R_T \norm{z_0 - z^*}\,.
\nonumber
	\end{equation}
	In particular, $\|z_{2T} - z^*\| \leq \eps$ after $ \mathcal{O}(\sqrt{\kappa} \log \frac{\|z_0 - z^*\|}{\eps})$ steps. 
\end{theorem}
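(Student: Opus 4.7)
\medskip

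\noindent\textbf{Proof proposal.} The plan is to leverage the paired-step reduction from \cref{obs:paired}. First, I would extend that observation from a single pair to the full schedule: for any $t$, the two GDA iterations with the slingshot stepsizes $\alpha_{2t} = -\beta_{2t} = -\alpha_{2t+1} = \beta_{2t+1} = h_t$ apply the operator $\bm{U}_{2t+1}\bm{U}_{2t} = \bm{I} - h_t^2 \bm{H}$ to $z_{2t} - \tilde z$, where
\begin{equation*}
    \bm{H} \;=\; \begin{bmatrix} \bm{BB}^{\top} & \bm{0} \\ \bm{0} & \bm{B}^{\top}\bm{B} \end{bmatrix}
\end{equation*}
is the Hessian of the Hamiltonian $\Phi(z) = \tfrac{1}{2}\|\nabla f(z)\|^2$ and $\tilde z = (\tilde x, \tilde y)$. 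This is exactly one step of GD on the quadratic $\Phi$ with stepsize $h_t^2$, and the reduction is identical to that of \cref{obs:paired} up to the time index on $h_t$; the only calculation needed is to check that the off-diagonal blocks of the two-step update cancel (which uses property (iii) $\alpha_{2t}+\alpha_{2t+1}=\beta_{2t}+\beta_{2t+1}=0$) and that the diagonal blocks take the stated form (which uses property (ii)).

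Second, I would unroll this reduction over all $T$ pairs. Since $\bm{H}$ is symmetric, the operators $\bm{I}-h_t^2\bm{H}$ commute, so after $2T$ iterations
\begin{equation*}
    z_{2T} - z^* \;=\; p(\bm H)\,(z_0 - z^*)
    \qquad\text{where}\qquad
    p(\lambda) \;=\; \prod_{t=0}^{T-1}\Bigl(1 - \tfrac{\lambda}{r_t}\Bigr)
\end{equation*}
is precisely the extremal polynomial from \cref{lem:extremal-bilinear}. Here I use the standard identity that for any saddle point $z^*$, $\nabla\Phi(z^*) = \bm H (z^* - \tilde z) = 0$ (since the solution set is $\tilde z + \Ker \bm H$), so $\bm H(z_t - \tilde z) = \bm H(z_t - z^*)$ and the one-step identity $z_{t+1} - z^* = (\bm I - h_t^2 \bm H)(z_t - z^*)$ holds for every saddle point $z^*$.

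Third, I would address the non-invertibility of $\bm H$ in the rank-deficient case, which I expect to be the main subtlety. The solution set is an affine translate of $\Ker\bm H = \Ker\bm B^{\top}\times \Ker \bm B$, so I would choose $z^*$ to be the Euclidean projection of $z_0$ onto the solution set. This places $z_0 - z^*$ in $\Ker(\bm H)^{\perp}=\mathrm{Range}(\bm H)$, on which the nonzero eigenvalues of $\bm H$ lie in $[m,M]$. Because $p(0)=1$, the operator $p(\bm H)$ preserves $\mathrm{Range}(\bm H)$, so $z_{2T}-z^*$ also lies in $\mathrm{Range}(\bm H)$, ensuring $z^*$ remains the nearest saddle point to $z_{2T}$. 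Applying \cref{lem:extremal-bilinear} on this invariant subspace,
\begin{equation*}
    \|z_{2T} - z^*\| \;\leq\; \|p(\bm H)|_{\mathrm{Range}(\bm H)}\|\cdot \|z_0 - z^*\|
    \;\leq\; \max_{\lambda\in[m,M]}|p(\lambda)|\cdot\|z_0-z^*\|
    \;=\; R_T\,\|z_0-z^*\|\,.
\end{equation*}

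Finally, the iteration-complexity bound follows from the closed form $R_T \le \bigl(\tfrac{\sqrt{\kappa}-1}{\sqrt{\kappa}+1}\bigr)^T$ stated in \cref{lem:extremal-bilinear}: setting this less than $\epsilon/\|z_0-z^*\|$ and solving for $T$ gives $2T = \mathcal{O}\bigl(\sqrt{\kappa}\log\tfrac{\|z_0-z^*\|}{\epsilon}\bigr)$. The bulk of the work is the reduction in the first two steps; the only nontrivial analytic content is the kernel bookkeeping in step three, and everything else is essentially \cref{obs:paired} together with the Chebyshev optimality already established in \cref{lem:extremal-bilinear}.
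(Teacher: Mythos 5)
Your argument is correct and is essentially the paper's own proof recast in the Hamiltonian variable $z$: the paper executes the same paired-step reduction and applies the Chebyshev extremal bound of \cref{lem:extremal-bilinear} to the $x$- and $y$-blocks separately and then sums squares, whereas you treat them jointly via $\bm{H} = \diag(\bm{BB}^\top,\bm{B}^\top\bm{B})$, which is a purely cosmetic difference, and your kernel/projection bookkeeping for rank-deficient $\bm{B}$ matches the paper's choice of $z^*$. One small slip: \cref{lem:extremal-bilinear} gives $R_T = 2/(\rho^T + \rho^{-T})$ with $\rho = (\sqrt{\kappa}+1)/(\sqrt{\kappa}-1)$, so the clean bound is $R_T \le 2\,\rho^{-T}$ rather than $R_T \le \rho^{-T}$ (the lemma uses $\approx$, not $\le$); this extra factor of $2$ is of course harmless for the $\mathcal{O}(\sqrt{\kappa}\log(1/\epsilon))$ iteration count.
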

\begin{proof}
    Without loss of generality, after a possible translation we may assume that $\tilde{x} = 0$ and $\tilde{y} = 0$ to simplify notation.
    We show that the iterates converge to the saddle point $z^* = (x^*,y^*)$ that is closest to initialization; that is, let $x^*$ and $y^*$ be the projections of $x_0$ and $y_0$ onto $\Ker(\bm{B}^{\top})$ and $\Ker(\bm{B})$, respectively.
    For this problem, the $\GDA$ iterates form a linear dynamical system
	\begin{align*}
		\begin{bmatrix}x_{n+1} \\ y_{n+1} \end{bmatrix} = \bm{U_n} \begin{bmatrix}x_{n}\\ y_{n} \end{bmatrix} \qquad \text{ where } \qquad \bm{U_n} = \begin{bmatrix} \bm I & -\alpha_n \bm B \\ \beta_n \bm {B}\T & \bm I\end{bmatrix}\,.
	\end{align*}
	Because of the pairing of consecutive stepsizes in the slingshot stepsize schedule,
	\begin{align}
		\bm{U_{2t+1}} \bm{U_{2t}} 
		=
		\begin{bmatrix}
			\bm I & h_t \bm B \\
			h_t \bm{B}\T & \bm I
		\end{bmatrix}
		\begin{bmatrix}
			\bm I & -h_t\bm B \\
			-h_t \bm{B}\T & \bm I
		\end{bmatrix}
		=
		\begin{bmatrix}
			\bm {I} - h_t^2 \bm {BB}\T & \bm{0} \\
			\bm{0} & \bm {I} - h_t^2 \bm {B}\T\bm{ B} 
		\end{bmatrix} 
		\,.
		\label{eq:bilinear-pf-pairing}
	\end{align}
	Thus in particular
	\begin{align}
		\begin{bmatrix}x_{2T}\\ y_{2T}\end{bmatrix} 
		= 
		\prod_{t=0}^{T-1} \bm{U_{2t+1}} \bm{U_{2t}}
		\begin{bmatrix} x_{0} \\ y_{0}\end{bmatrix} 
		=  
		\begin{bmatrix}
			p(\bm {BB}\T) & \bm 0 \\ \bm 0 & p(\bm {B}\T\bm{B} )
		\end{bmatrix}
		\begin{bmatrix}x_{0}\\ y_{0}\end{bmatrix} 
		\,,
		\nonumber
	\end{align}
	where $p(z) := \prod_{t=0}^{T-1} (1 - h_t^2 z) = \prod_{t=0}^{T-1} (1 - z/r_t)$ is the extremal polynomial in~\cref{lem:extremal-bilinear}.
    Thus
    \begin{align*}
		\|y_{2T} - y^*\|
		=
		\|p(\bm{B}\T\bm{B}) (y_0 - y^*)\|
		= \| \bm V p(\bm{\Sigma}^2) \bm{V}\T (y_0 - y^*)\|
		\leq \|p(\bm{\Sigma}^2) \| \,  \|(y_0 - y^*)\|
		\leq R_T \, \|y_0 - y^*\|\,.
	\end{align*}
	Above, the first step is because $p(\bm{B}\T\bm{B}) y^* = \prod_{t=0}^{T-1} (\bm I - h_t^2 \bm{B}\T\bm{B}) y^* = y^*$ since $y^* \in \Ker(B)$. The second step is by an SVD decomposition of $\bm B = \bm U \bm \Sigma \bm{V}\T$, where $\bm \Sigma$ is invertible and $\bm U, \bm V$ are orthogonal. The third step is by sub-multiplicativity of the norm and orthogonality of $\bm V$. The fourth step is by~\cref{lem:extremal-bilinear} and the assumption that the non-zero singular values of $\bm B$ lie in $[\sqrt{m},\sqrt{M}]$, which implies that the non-zero eigenvalues of $\bm \Sigma^2$ lie in $[m,M]$. 
	
	\par By an identical argument (or symmetry), we also have
	\begin{align*}
		\|x_{2T} - x^*\| \leq R_T \|x_0 - x^*\|\,.
	\end{align*}
	Summing the squares of the above two displays yields $\|z_{2T} - z^*\|^2 \leq R_T^2 \|z_0 - z^*\|^2$. 
\end{proof}

The convergence rate $R_T$ in~\cref{thm:bilinear-ub} is exactly optimal not just among all possible GDA stepsize schedules, but also among all first-order algorithms. Matching lower bounds are known for the class of symmetric Krylov-subspace algorithms~\citep{azizian2020accelerating, Ibrahim_Azizian_Gidel_Mitliagkas_2020}, which are algorithms for which $(x_t,y_t) \in (x_0,y_0) + \mathrm{Span}\{ (-\nabla_x f(x_s,y_s), \nabla_y f(x_s,y_s)) : s < t\}$, and as stated below, this extends to asymmetric Krylov-subspace algorithms, which are algorithms for which $x_t  \in x_0 + \mathrm{Span}\{ \nabla_x f(x_s,y_s)  : s < t\}$ and $y_t  \in y_0 + \mathrm{Span}\{ \nabla_y f(x_s,y_s)  : s < t\}$. This generalization allows for asymmetric updates to $x$ and $y$, and thus captures the slingshot stepsizes (as well as other algorithms). The proof is a straightforward extension of the argument for symmetric algorithms, hence we defer it to~\cref{app:bilinear:lb}. 

\begin{theorem}[Optimality of~\cref{thm:bilinear-ub}]\label{thm:bilinear-lb}
	For any integer $T$, any initialization $z_0 = (x_0,y_0) \in \R^{d_x} \times \R^{d_y}$, and any asymmetric Krylov-subspace algorithm, there exists a bilinear problem~\eqref{eq:bilinear} for which the initialization is not a saddle point and 
	\begin{equation}
		\norm{z_{2T} - z^*}
		\geq
		R_T \norm{z_0 - z^*}
		\nonumber
	\end{equation}
	holds for all saddle points $z^* = (x^*, y^*)$.
\end{theorem}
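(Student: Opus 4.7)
The plan is to construct, for any given initialization $z_0 = (x_0, y_0)$, a ``hard'' bilinear problem tailored to that specific initialization so that the iterates of any asymmetric Krylov-subspace algorithm are confined to a low-dimensional polynomial orbit whose worst-case behavior is governed by the same Chebyshev extremal quantity $R_T$ appearing in Lemma~\ref{lem:extremal-bilinear}. This mirrors the proof of the upper bound (Theorem~\ref{thm:bilinear-ub}) but in a ``dual'' direction: there a bilinear problem is given and the algorithm's polynomial is dominated by $R_T$; here the extremal polynomial is fixed in advance and we choose the problem.

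\textbf{Translation and construction.} I will exploit the freedom to choose $\tilde x, \tilde y$ as part of the bilinear problem. Setting $\tilde x := x_0$ and working in the shifted coordinates $x' = x - \tilde x$, $y' = y - \tilde y$, the problem becomes the centered bilinear $(x')^\top \bm B y'$ with initialization $(x_0', y_0') = (0,\, y_0 - \tilde y)$, and the Euclidean norms in the theorem are translation-invariant. Assuming $d_x, d_y \geq T+1$ (smaller cases are handled by embedding into a larger ambient space with zero padding), I pick $\bm B$ of rank $T+1$ with squared singular values equal to the $T+1$ Chebyshev extrema $\xi_0 > \cdots > \xi_T$ of $\cT_T^{[m,M]}$ in $[m,M]$, and let $v_0, \dots, v_T$ denote its right singular vectors. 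Positive weights $c_0^2, \dots, c_T^2 > 0$ are chosen to solve the $T \times (T+1)$ moment system $\sum_i c_i^2 (-1)^i \xi_i^k = 0$ for $k = 1, \dots, T$. Finally set $y_0' := \sum_i c_i v_i$ and $\tilde y := y_0 - y_0'$. Since $\bm B^\top x_0' = 0$ while $\bm B y_0' = \sum_i c_i \sqrt{\xi_i} u_i \neq 0$, the initialization is not a saddle.

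\textbf{Krylov orbit and reduction to one saddle-free inequality.} Because $\bm B^\top x_0' = 0$, an induction on the iteration index (mirroring the calculation in the proof of Theorem~\ref{thm:bilinear-ub}) shows that for any asymmetric Krylov-subspace algorithm,
\begin{align*}
y_{2T}' = P(\bm B^\top \bm B)\, y_0' \qquad \text{and} \qquad x_{2T}' = \bm B\, q(\bm B^\top \bm B)\, y_0'\,,
\end{align*}
for some $P \in \cP_T$ and some polynomial $q$ of degree $\leq T-1$. Every saddle point satisfies $x^{*'} \in \Ker(\bm B^\top)$ and $y^{*'} \in \Ker(\bm B)$; combined with $y_0', y_{2T}' \in \mathrm{Im}(\bm B^\top) \perp \Ker(\bm B)$ and $x_{2T}' \in \mathrm{Im}(\bm B) \perp \Ker(\bm B^\top)$, orthogonality yields
\begin{align*}
\|z_{2T}' - z^{*'}\|^2 - R_T^2 \|z_0' - z^{*'}\|^2 = \bigl[\|x_{2T}'\|^2 + \|y_{2T}'\|^2 - R_T^2 \|y_0'\|^2\bigr] + (1 - R_T^2)\bigl(\|x^{*'}\|^2 + \|y^{*'}\|^2\bigr).
\end{align*}
Since $R_T < 1$, the second term is non-negative, so it suffices to show the bracketed term is non-negative; crucially, the bracketed term no longer depends on the saddle $z^*$, which delivers the ``for all $z^*$'' universal conclusion in the theorem.

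\textbf{Weighted extremal inequality and main obstacle.} Expanding in the singular basis, the bracketed inequality reduces to $\sum_i c_i^2 \bigl[P(\xi_i)^2 + \xi_i q(\xi_i)^2\bigr] \geq R_T^2 \sum_i c_i^2$, and since $\xi_i q(\xi_i)^2 \geq 0$ it further reduces to $\sum_i c_i^2 P(\xi_i)^2 \geq R_T^2 \sum_i c_i^2$ for every $P \in \cP_T$. This is a convex quadratic minimization on the affine hyperplane $\{P(0) = 1\}$: the rescaled Chebyshev polynomial $P^* := \cT_T^{[m,M]}/\cT_T^{[m,M]}(0)$ satisfies $P^*(\xi_i) = \pm R_T$ with alternating signs, and the moment system defining the $c_i^2$'s is exactly the first-order stationarity condition $\sum_i c_i^2 P^*(\xi_i)\, \tilde p(\xi_i) = 0$ for every $\tilde p \in \mathrm{span}\{\lambda, \lambda^2, \dots, \lambda^T\}$, identifying $P^*$ as the global minimizer with optimal value $R_T^2 \sum_i c_i^2$. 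Translating back to the original coordinates then yields $\|z_{2T} - z^*\| \geq R_T \|z_0 - z^*\|$. The main obstacle I foresee is verifying positivity of the weights $c_i^2$: the unique-up-to-scaling null vector of the $T \times (T+1)$ Vandermonde-type moment matrix must have signs alternating in lockstep with the Chebyshev sign pattern, which I plan to establish either by an explicit Lagrange-interpolation / partial-fractions computation or by identifying the $c_i^2$'s with Gauss--Lobatto-type quadrature weights at the Chebyshev extrema.
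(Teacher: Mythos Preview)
Your approach is correct and takes a genuinely different route from the paper. The paper's proof is far more elementary: it considers a \emph{one}-dimensional instance $b(x-x^*)(y-y^*)$ with $y^* = y_0$, invokes the block-polynomial representation (Lemma~\ref{lem:block-poly}) to write $x_{2T} - x^* = p(b^2)(x_0 - x^*)$ for some $p \in \cP_T$, and then --- since the proof is explicitly written only for \emph{non-adaptive} algorithms --- chooses $b^2 \in [m,M]$ adversarially to be the point where $|p|$ exceeds $R_T$ (guaranteed by Lemma~\ref{lem:extremal-bilinear}). The adaptive case is deferred to an external reference. By contrast, you construct a single rank-$(T{+}1)$ instance whose squared singular values sit at the Chebyshev extrema, with the initialization weighted so that the Chebyshev polynomial is the unique minimizer of the resulting discrete $\ell^2$ problem over $\cP_T$. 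This is more work, but it buys you something real: because your inequality $\sum_i c_i^2 P(\xi_i)^2 \geq R_T^2 \sum_i c_i^2$ holds for \emph{every} $P \in \cP_T$, your hard instance is algorithm-independent and therefore handles adaptive algorithms directly, without appeal to an external result.

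Two remarks. First, your anticipated obstacle --- positivity of the $c_i^2$ --- does resolve via the Lagrange route you suggest: writing $w_i := (-1)^i c_i^2$, the moment system forces $w_i \propto \ell_i(0)$ where $\ell_i$ is the $i$-th Lagrange basis polynomial at the nodes $\xi_0 > \cdots > \xi_T > 0$, and a sign count on $\ell_i(0) = \prod_{j \neq i} (-\xi_j)/(\xi_i - \xi_j)$ gives $\mathrm{sign}\,\ell_i(0) = (-1)^{T+i}$, so $c_i^2 = (-1)^i w_i$ is positive for a suitable overall sign. Second, your ``zero-padding'' remark for small dimensions does not work as stated: the theorem fixes $z_0 \in \R^{d_x} \times \R^{d_y}$ before the problem is chosen, so you cannot enlarge $d_x, d_y$. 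This is a genuine limitation of your construction (it needs $\min(d_x, d_y) \geq T+1$), though it is not a limitation of the paper either --- the paper's one-dimensional argument fails for adaptive algorithms, and the reference it cites for the adaptive upgrade also requires growing dimension.
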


An immediate corollary of our results is the resolution of an open question about more sophisticated variants of GDA. State-of-the-art complexity results show that extragradient, optimistic, and negative-momentum variants of GDA converge on bilinear problems but only at the unaccelerated rate $\mathcal{O}(\kappa \log 1/\epsilon)$~\citep{Zhang_Wang_2021,gidel2019negative,Daskalakis_Ilyas_Syrgkanis_Zeng_2018,Liang_Stokes_2019}, and it was open whether they could converge at the accelerated rate $\mathcal{O}(\sqrt{\kappa}\log 1/\epsilon)$.~\cref{thm:bilinear-ub} shows that GDA can achieve this accelerated rate, hence all these algorithms can too---for the trivial reason that GDA is a special case of these more sophisticated algorithms (use the slingshot stepsizes and set the momentum/look-ahead parameters to zero).

The work~\citep{azizian2020accelerating} also achieves the accelerated rate via algorithms that are specially tailored for the unconstrained bilinear setting.
Their algorithm effectively implements Polyak's heavy ball method on the Hamiltonian $\Phi := \tfrac{1}{2} \|\nabla f\|^2$ (which is a convex quadratic when $f$ is bilinear). Whereas, as explained in~\cref{ssec:connections:linear}, our algorithm effectively implements GD on the Hamiltonian with the Chebyshev stepsize schedule \citep{young53}---an equivalent way of accelerating quadratic minimization.
Hence the algorithm of~\citep{azizian2020accelerating} is to the slingshot stepsizes as Polyak's heavy ball method is to Young's stepsizes.

\par It is worth remarking that the slingshot stepsizes access the Hamiltonian in an entirely different way from~\citep{azizian2020accelerating}, which enables our algorithm to extend beyond bilinear $f$. The algorithms in~\citep[\S5]{azizian2020accelerating} rely on computing the operator $F^{\text{real}}(z) :=\tfrac{1}{\eta} [ \bm J\nabla f(z-\eta \bm J\nabla f(z)) - \bm J \nabla f(z) ]$ where $\bm{J} := \diag(\bm{I}, -\bm{I})$. For bilinear $f$, the diagonal blocks of $\nabla^2 f$ are zero, hence $F^{\text{real}}(z) = - \bm J\nabla^2 f(z) \bm J \nabla f(z)$ coincides with $\nabla \Phi = \nabla^2 f(z) \nabla f(z)$. However, beyond bilinear $f$, this is false since $- \bm J\nabla^2 f(z) \bm J \nabla f(z) \neq \nabla^2 f(z) \nabla f(z)$. In fact, $F^{\text{real}}$ can even be the \emph{negative} gradient of the Hamiltonian, which is not a direction that encourages stationarity---for example, for the quadratic problem $\min_x \max_y x^2$, this algorithm ascends rather than descends in $x$, leading to exponential divergence. By taking steps in the direction of the positive/negative gradient $\plusminus \nabla f$ rather than the monotone operator $\bm{J} \nabla f$, the slingshot stepsizes enable approximating $\nabla \Phi$ in general settings beyond bilinear $f$. See~\cref{sec:connections} for further discussion.

\subsection{Quadratic objectives}\label{ssec:quad}

We now turn to min-max problems with quadratic objectives. Since we assume the existence of a saddle point $\tilde{z} = (\tilde{x},\tilde{y})$, without loss of generality these problems are of the form
\begin{equation}\label{eq:quadminmax}
	\min_{x\in \R^{d_x}} \max_{y \in \R^{d_y}} \; \frac{1}{2}\begin{bmatrix}x - \tilde{x} \\ y - \tilde{y}\end{bmatrix}^\top
	\underbrace{\begin{bmatrix}\bm A & \bm B\\ \bm B^\top &  - \bm C\end{bmatrix}}_{\bm H}
	\begin{bmatrix}x - \tilde{x} \\ y - \tilde{y}\end{bmatrix}
\end{equation}
We assume that the objective is convex-concave ($\bm A, \bm C \succeq 0 $) and $L$-smooth ($-L \bm I \preceq \bm H \preceq L \bm I$). A primary motivation for this setting is that it captures bilinear problems without a spectral lower bound assumption, i.e., it captures arbitrary smooth bilinear problems by considering the special case $\bm A = 0$ and $\bm C = 0$. Since the slingshot stepsize schedule obtains optimal convergence rates for the more general setting of quadratic objectives (and not just smooth bilinear objectives\footnote{Note that the matching lower bound in~\cref{thm:quad-lb} is written for quadratic problems, yet the construction is a smooth bilinear problem, which establishes the optimality of the slingshot stepsizes for such problems.}), we present the results in this section at that higher level of generality.

\par For this setting we propose a similar slingshot stepsize schedule.

\begin{defin}[Slingshot stepsize schedule for quadratic min-max optimization]\label{def:quadsteps}
	For any even number of iterations $2T$ and any $L$-smooth, convex-concave, quadratic min-max problem, the slingshot stepsize schedule is 
	\begin{equation}
		\nonumber
		\alpha_{t} = -\beta_{t} = h_t,\qquad  t \in \{0,1,...,2T-1\},
	\end{equation}
	where $\{h_t^{-1}\}_{t=0}^{2T-1}$ are any permutation of 
	\begin{align}
\nonumber
		\rho_t := L \cos \left( \frac{2t+1}{4T+2} \pi \right)\,, \qquad t \in \{0,\dots,2T\} \setminus \{T\}\,,
	\end{align}
	which are the $2T$ non-zero roots of the Chebyshev polynomial $\cT_{2T+1}^{[-L,L]}$. Notice that like in~\cref{def:steps-bilinear}, these roots come in positive/negative pairs because 
    \[
        \rho_t = -\rho_{2T-t}\,.
    \]
\end{defin}

Just like the slingshot stepsize schedule for the bilinear setting in~\cref{def:steps-bilinear}, the stepsizes here are also asymmetric, time-varying, and paired positive/negative. 
The difference from the bilinear setting is in the stepsize magnitudes. This arises because the relevant extremal polynomial problem is slightly different in the quadratic setting here, namely the following.

\begin{lemma}[Extremal polynomial lemma for quadratic min-max problems]\label{lem:extremal-quad}
	For $T \in \N$ and $L > 0$,
	\begin{align*}
		\min_{p \in \cP_{2T}} \max_{\lambda \in [-L,L]} |\lambda p(\lambda)|
	\end{align*}
	has value $\frac{L}{2T+1}$, achieved by $p(\lambda) = \frac{(-1)^T}{2T+1} \frac{\cT_{2T+1}(\lambda/L)}{\lambda/L} = \prod_{t \in \{0,
		\dots,2T\} \setminus \{T\}} (1 - \lambda/\rho_t)$. 
\end{lemma}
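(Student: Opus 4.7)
The plan is to reduce to the standard interval $[-1,1]$ by rescaling, exhibit the claimed extremizer using classical identities for Chebyshev polynomials, and prove optimality with a one-line application of Bernstein's inequality.

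First, I would substitute $\mu = \lambda/L$ and $\tilde p(\mu) := p(L\mu)$. Since $p \mapsto \tilde p$ is a bijection of $\cP_{2T}$ with itself and $\max_{\lambda \in [-L,L]} |\lambda p(\lambda)| = L \cdot \max_{\mu \in [-1,1]} |\mu \tilde p(\mu)|$, it suffices to show
\[
\min_{\tilde p \in \cP_{2T}} \max_{\mu \in [-1,1]} |\mu \tilde p(\mu)| = \frac{1}{2T+1},
\]
with candidate minimizer $\tilde p^*(\mu) := \frac{(-1)^T}{2T+1}\,\cT_{2T+1}(\mu)/\mu$, i.e., the rescaling of the claimed $p^*$.

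Second, achievability. I would verify that $\tilde p^* \in \cP_{2T}$: since $\cT_{2T+1}$ is an odd polynomial of degree $2T+1$, the quotient $\cT_{2T+1}(\mu)/\mu$ is a polynomial of degree $2T$; and the standard identity $\cT_{2T+1}'(0) = (-1)^T(2T+1)$ (immediate from $\cT_n(\cos\theta) = \cos n\theta$ evaluated at $\theta = \pi/2$) gives $\tilde p^*(0) = 1$ by L'H\^opital. Then $\mu \tilde p^*(\mu) = \frac{(-1)^T}{2T+1}\cT_{2T+1}(\mu)$ has sup-norm exactly $\frac{1}{2T+1}$ on $[-1,1]$ because $\|\cT_{2T+1}\|_{L^\infty[-1,1]} = 1$. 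The product representation $\tilde p^*(\mu) = \prod_{t\neq T}(1-\mu L/\rho_t)$ then follows by writing $\cT_{2T+1}(\mu) = 2^{2T}\prod_{k=0}^{2T}(\mu - \mu_k)$ with $\mu_k := \rho_k/L$, cancelling the root $\mu_T = 0$, and renormalizing at $\mu = 0$.

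Third, optimality. Given any $\tilde p \in \cP_{2T}$, define $q(\mu) := \mu \tilde p(\mu)$, a polynomial of degree at most $2T+1$ with $q(0) = 0$ and $q'(0) = \tilde p(0) = 1$. Bernstein's inequality states that for every polynomial $Q$ of degree $\leq n$ with $\|Q\|_{L^\infty[-1,1]} \leq 1$, one has $|Q'(x)| \leq n/\sqrt{1-x^2}$ on $(-1,1)$; applying this to $Q := q/\|q\|_{L^\infty[-1,1]}$ at $x = 0$ with $n = 2T+1$ yields $1 = q'(0) \leq (2T+1)\,\|q\|_{L^\infty[-1,1]}$, so $\|q\|_{L^\infty[-1,1]} \geq \frac{1}{2T+1}$. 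Undoing the $L$-scaling completes the proof.

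The analysis is really a bookkeeping exercise in classical facts, so the main obstacle is just matching the two equivalent forms of $p^*$ without losing a sign or a normalization. If one preferred to avoid quoting Bernstein, an alternative route is a Chebyshev alternation argument: $q^* := \frac{(-1)^T}{2T+1}\cT_{2T+1}$ equioscillates at the $2T+2$ points $\cos(k\pi/(2T+1))$, $k = 0,\dots,2T+1$, none of which equals $0$ (since $(2T+1)/2$ is not an integer). A hypothetical $q$ with strictly smaller sup-norm would force $q^* - q$ to have at least $2T+1$ distinct sign-change roots, one strictly between each pair of consecutive extrema, \emph{and} a root of multiplicity $\geq 2$ at $0$ (since $q^*(0) = q(0) = 0$ and $(q^*)'(0) = q'(0) = 1$), yielding at least $2T+2$ roots with multiplicity in a polynomial of degree $\leq 2T+1$---a contradiction.
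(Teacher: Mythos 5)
Your proof is correct, and your primary route is genuinely different from the paper's. The paper proves optimality by a Chebyshev equioscillation argument directly on $q(\lambda)=\lambda p(\lambda)$: since $q$ is proportional to $\cT_{2T+1}(\lambda/L)$ it equioscillates $2T+2$ times, so any competitor $\tilde q$ with strictly smaller sup-norm would force $q-\tilde q$ to alternate signs $2T+2$ times and hence have too many roots for its degree. You instead dispatch optimality in one line via Bernstein's inequality applied at the origin to $q(\mu)=\mu\tilde p(\mu)$, using $q'(0)=\tilde p(0)=1$. This is a clean and slightly slicker argument that hides the combinatorics inside a standard inequality; the paper's equioscillation route is more elementary but requires careful root bookkeeping. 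Notably, your ``alternative'' equioscillation argument at the end is essentially the paper's argument, but stated more precisely: you correctly observe that $q^*-q$ has $2T+1$ sign-change roots \emph{plus} a root of multiplicity $\geq 2$ at $0$ (since both vanish to first order there), for $\geq 2T+2$ roots with multiplicity against degree $\leq 2T+1$. The paper's exposition asserts that ``$q$ and $\tilde q$ have degree at most $2T$,'' which is a small slip (these have degree up to $2T+1$); the argument still closes, but only after invoking the extra root at zero as you do. Your achievability computation (normalization $\cT_{2T+1}'(0)=(-1)^T(2T+1)$, cancellation of the root at zero, and the product form with $\rho_t$) matches the paper's implicit calculation and is correct.
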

\begin{proof}
    This polynomial $p$ achieves the stated value by the definition of Chebyshev polynomials, see~\cref{ssec:prelim:cheby}. It therefore suffices to prove a matching lower bound. Recall Bernstein's inequality $\sup_{x \in [-1,1]} |q(x)| \geq |q'(0)| / \mathrm{deg}(q)$ for any polynomial $q$~\citep[Theorem 5.17]{borwein2012polynomials}. Thus, for any $p \in \cP_{2T}$, the polynomial $q(x) := Lx p(Lx)$ has degree at most $2T+1$ and satisfies $q'(0) = L$, hence $\max_{\lambda \in [-L,L]} |\lambda p(\lambda)| = \max_{x \in [-1,1]} |q(x)| \geq L/(2T+1)$. 
\end{proof}

We show that these slingshot stepsizes enable GDA to converge for quadratic settings. As in the bilinear setting above, constant stepsize magnitudes $h_t$ are sufficient for convergence, and using the Chebyshev stepsize magnitudes in~\cref{def:quadsteps} accelerates the convergence to the optimal rate.

\begin{theorem}[GDA converges for quadratic min-max optimization]\label{thm:quad-ub}
	Consider any integer $T$, any dimensions $d_x, d_y$, any initialization $z_0 = (x_0,y_0)\in \R^{d_x}\times \R^{d_y}$, and any quadratic min-max problem~\eqref{eq:quadminmax}  that is convex-concave and $L$-smooth. Using the slingshot stepsize schedule in \cref{def:quadsteps}, GDA converges at rate
	\begin{equation}
		\|\nabla f(z_{2T})\| \leq \frac{L}{2T+1} \|z_0 - z^*\|\,,
	\end{equation}
    where $z^*$ is any saddle point. In particular, $\|\nabla f(z_{2T})\| \leq \eps$ after $\mathcal{O}(\frac{L \|z_0 - z^*\|}{\eps})$ steps. 
\end{theorem}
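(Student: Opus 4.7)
The plan is to mirror the spectral polynomial argument used for the bilinear case (\cref{thm:bilinear-ub}), exploiting the key algebraic collapse that the slingshot choice $\alpha_t = -\beta_t = h_t$ turns the GDA update into a \emph{symmetric} polynomial iteration in $\bm{H}$ --- and not in the non-normal operator $\bm{J}\bm{H}$ that would arise from standard (e.g.\ symmetric) stepsizes.

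First I would translate so that WLOG $\tilde{z} = \bm{0}$; then $\nabla f(z) = \bm{H} z$ and saddle points are characterized by $z^* \in \Ker \bm{H}$. Plugging the slingshot stepsizes into the GDA update, the ``$+$'' in the $y$-update cancels against $\beta_t = -h_t$, so both variables move in the single direction $-\nabla f$:
\begin{equation*}
    z_{t+1} \;=\; z_t - h_t \bm{H} z_t \;=\; (\bm{I} - h_t \bm{H})\, z_t.
\end{equation*}
Each factor $(\bm{I} - h_t \bm{H})$ fixes $z^*$ since $\bm{H} z^* = \bm{0}$, so iterating across the $2T$ slingshot steps gives $z_{2T} - z^* = p(\bm{H})(z_0 - z^*)$, where
\begin{equation*}
    p(\lambda) \;\defeq\; \prod_{t=0}^{2T-1}(1 - h_t \lambda) \;=\; \prod_{t \in \{0,\dots,2T\} \setminus \{T\}}(1 - \lambda / \rho_t)
\end{equation*}
is precisely the extremal polynomial appearing in \cref{lem:extremal-quad}.

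Second, I would convert this iterate representation into a gradient bound. Using $\bm{H} z^* = \bm{0}$ once more,
\begin{equation*}
    \nabla f(z_{2T}) \;=\; \bm{H}\, z_{2T} \;=\; \bm{H}(z_{2T} - z^*) \;=\; q(\bm{H})(z_0 - z^*), \qquad q(\lambda) \defeq \lambda p(\lambda).
\end{equation*}
Because $\bm{H}$ is symmetric with spectrum contained in $[-L, L]$, the spectral theorem gives $\|q(\bm{H})\| \leq \max_{\lambda \in [-L,L]} |\lambda p(\lambda)|$, which by \cref{lem:extremal-quad} is exactly $L/(2T+1)$. Submultiplicativity then yields the stated bound on $\|\nabla f(z_{2T})\|$, and the iteration complexity is routine algebra.

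There is no real obstacle here once the algebraic collapse is spotted: pairing $\alpha_t = -\beta_t$ turns slingshot GDA into one-directional gradient descent on the \emph{indefinite} symmetric matrix $\bm{H}$, so the analysis reduces to a classical Chebyshev/extremal-polynomial calculation in the style of Young's acceleration of GD on convex quadratics. The one subtlety worth flagging is why the theorem bounds $\|\nabla f(z_{2T})\|$ rather than $\|z_{2T} - z^*\|$: eigenvalues of $\bm{H}$ can be arbitrarily close to $0$ (for instance, smooth bilinear instances are flat in many directions), which rules out any rate-$1/T$ contraction in distance to the solution set --- and this is precisely why \cref{lem:extremal-quad} optimizes $|\lambda p(\lambda)|$ rather than $|p(\lambda)|$.
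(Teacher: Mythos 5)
Your proposal is correct and follows essentially the same route as the paper's proof: translate so the saddle point is at the origin, observe that $\alpha_t=-\beta_t=h_t$ collapses the GDA step to $z_{t+1}=(\bm I - h_t\bm H)z_t$, and then bound $\|\nabla f(z_{2T})\| = \|\bm H\, p(\bm H)(z_0-z^*)\|$ via \cref{lem:extremal-quad} and spectral diagonalization of the symmetric matrix $\bm H$. Your closing remark on why the guarantee must be in terms of $\|\nabla f(z_{2T})\|$ rather than $\|z_{2T}-z^*\|$ correctly anticipates the discussion in \cref{rem:quad-metrics}.
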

\begin{proof} 
    Without loss of generality, after a possible translation we may assume that $\tilde{x} = 0$ and $\tilde{y} = 0$ to simplify notation. 
    By the paired positive/negative property of the slingshot stepsize schedule, the iterates of GDA satisfy $z_{t+1} = (\bm{I} - h_t \bm{H})z_t$. Hence
	\begin{align*}
		\nabla f(z_{2T})
		= 
		\bm H 
		z_{2T}
		= 
		\bm H \prod_{t=0}^{2T-1} \left(\bm I - h_t \bm H \right)  (z_0 - z^*)
		= \bm H p(\bm H) (z_0 - z^*)\,,
	\end{align*}
	where $p(\lambda) = \prod_{t=0}^{2T-1}(1 - h_t \lambda) = \prod_{t=0}^{2T-1} (1 - \lambda/\rho_t)
	$ is the polynomial in~\cref{lem:extremal-quad}. By sub-multiplicativity, diagonalizing, and the extremal property of $p$ in~\cref{lem:extremal-quad}, we conclude that
	\begin{align*}
		\|\nabla f(z_{2T})\|
		\leq
		\norm{\bm H p(\bm H) }  \cdot\norm{z_0 - z^*}
		\leq
		\max_{\lambda \in [-L,L]} |\lambda p(\lambda)| \cdot \norm{z_0 - z^*}
		=
		\frac{L}{2T+1} \norm{z_0 - z^*}\,.
	\end{align*}
\end{proof}

Note that the convergence rate in~\cref{thm:quad-ub} holds for any saddle point $z^*$. The best bound is obtained by taking $z^*$ to be the closest saddle point to the initialization $z_0$, namely the projection of $z_0$ onto the affine subspace $\tilde{z} + \ker(H)$.

The convergence rate in~\cref{thm:quad-ub} is exactly optimal not just among all possible GDA stepsize schedules, but also among all first-order algorithms. Several other recent works also asymptotically achieve the accelerated convergence rate $\mathcal{O}(1/\epsilon)$ for $\|\nabla f(z_{2T})\|$ and do so in more general settings. However no prior work has obtained the optimal constants (as we do here), and moreover all prior methods require modifying GDA beyond just stepsizes, e.g., with anchoring  \citep{yoon2021accelerated,Yoon_Kim_Suh_Ryu_2024}.

\begin{theorem}[Optimality of \cref{thm:quad-ub} (Theorem 3 of~\citep{yoon2021accelerated})]\label{thm:quad-lb}
	For any integer $T$, any initialization $z_0 = (x_0, y_0)\in \R^{d_x}\times \R^{d_y}$, and any asymmetric Krylov-subspace algorithm, there exists an $L$-smooth quadratic min-max problem~\eqref{eq:quadminmax} for which the initialization is not a saddle point and
	\begin{align*}
		\|\nabla f(z_{2T})\| \geq \frac{L}{2T+1} \|z_0 - z^*\|
	\end{align*} 
	holds for all saddle points $z^* = (x^*,y^*)$. 
\end{theorem}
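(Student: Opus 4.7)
The plan is to reduce to a worst-case bilinear instance (i.e., the special case $\bm{A} = \bm 0$ and $\bm{C} = \bm 0$ of \eqref{eq:quadminmax}) and engineer it so that the asymmetric Krylov subspace after $2T$ iterations reduces to a single polynomial $p \in \cP_{2T}$ evaluated at the Hessian, whereupon \cref{lem:extremal-quad} applied to $\bm{H} p(\bm H) (z_0 - z^*)$ delivers the lower bound $\tfrac{L}{2T+1}\|z_0-z^*\|$. Concretely, I would pick a sufficiently large ambient dimension $d$, take $\tilde{x} = \tilde{y} = 0$, and choose $\bm{B}$ to be a bidiagonal ``shift'' matrix (in the style of Nesterov--Nemirovski worst cases) so that the block anti-diagonal Hessian
\[
\bm{H} = \begin{bmatrix} \bm 0 & \bm B \\ \bm{B}\T & \bm 0 \end{bmatrix}
\]
has $\|\bm{H}\| = L$ and its nonzero eigenvalues interlace densely in $[-L,L]$, with an initialization $z_0 = e_1$ chosen so that each application of $\bm H$ enlarges the support of $z_0$ by exactly one new coordinate.

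The central step is the structural claim that for this instance, any asymmetric Krylov iterate satisfies $z_{2T} = p(\bm H)(z_0 - z^*)$ with $p \in \cP_{2T}$, so that $\nabla f(z_{2T}) = \bm H p(\bm H)(z_0 - z^*)$. The subtlety is that asymmetric Krylov permits different linear combinations in the $x$-block versus the $y$-block, and a priori this extra flexibility might break the clean polynomial characterization. The resolution is that the block anti-diagonal structure forces $\nabla_x f = \bm B y$ and $\nabla_y f = \bm{B}\T x$, so an expansion of the $y$-span in one iteration feeds directly into the $x$-gradients in the next iteration (and vice versa). A careful induction on $t$, leveraging the shift structure of $\bm B$, then shows that the asymmetric Krylov span generated by the past gradients coincides with the symmetric Krylov span $\spann\{z_0, \bm H z_0, \dots, \bm{H}^{2T-1} z_0\}$, so $z_{2T} - z_0 \in \bm{H}\cdot\spann\{\bm{H}^k z_0 : 0 \leq k < 2T\}$, yielding the polynomial representation.

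Once that representation is in hand, the conclusion is short: by sub-multiplicativity and the extremal characterization in \cref{lem:extremal-quad},
\[
\|\nabla f(z_{2T})\| = \|\bm H p(\bm H)(z_0 - z^*)\| \geq \min_{q\in \cP_{2T}} \max_{\lambda \in \mathrm{spec}(\bm H)} |\lambda q(\lambda)| \cdot \|z_0 - z^*\|,
\]
and by taking $d$ large (equivalently, making $\mathrm{spec}(\bm H)$ sufficiently rich in $[-L,L]$) the inner minimum converges to $\tfrac{L}{2T+1}$, matching the claimed bound. I expect the main obstacle to be the asymmetric-Krylov reduction in the middle paragraph: one must verify that the shift construction truly denies the algorithm any benefit from the block-asymmetry. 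This is precisely the content of Theorem 3 of \citep{yoon2021accelerated}, which I would cite for a fully rigorous version of the construction; the remaining steps (spectral packing and applying the extremal polynomial lemma) are routine.
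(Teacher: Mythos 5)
The paper does not actually prove this theorem: as the bracketed attribution in its name indicates, it is imported verbatim from Theorem 3 of \citep{yoon2021accelerated}, and the only "proof" the paper offers is that citation. So the right benchmark is whether your sketch is a plausible proof, not whether it matches an in-paper argument.

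Your sketch has the right ingredients (reduce to a bilinear instance with $\bm A = \bm C = \bm 0$, bring in the extremal polynomial in \cref{lem:extremal-quad}) but two of the intermediate steps as written would not survive scrutiny. First, the claimed representation $z_{2T} - z^* = p(\bm H)(z_0 - z^*)$ for a single $p \in \cP_{2T}$ is not what an \emph{asymmetric} Krylov-subspace algorithm yields; the extra block-wise freedom produces the four-polynomial block structure recorded in \cref{lem:block-poly} (a symmetric Krylov algorithm would give a single polynomial, but that is a strictly weaker class and would not prove the theorem as stated). The standard way to collapse the block structure to a single scalar polynomial is not to make the asymmetric and symmetric spans coincide via a shift construction, but to pick $y_0 = y^*$ so that the $p^{(12)}$ and $p^{(22)}$ contributions vanish identically, leaving $\nabla_y f(z_{2T}) = \bm B^{\top} p^{(11)}(\bm B\bm B^{\top})(x_0 - x^*)$ with $p^{(11)} \in \cP_T$; this is exactly the device used in the paper's own proof of \cref{thm:bilinear-lb}. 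Second, the displayed inequality
\[
\|\bm H\, p(\bm H)(z_0 - z^*)\| \;\geq\; \min_{q \in \cP_{2T}}\ \max_{\lambda \in \mathrm{spec}(\bm H)} |\lambda\, q(\lambda)|\ \cdot\ \|z_0 - z^*\|
\]
is false as a matrix inequality: for a fixed vector $v$ one only gets $\|\bm H p(\bm H) v\| \geq \min_{\lambda \in \mathrm{spec}(\bm H) \cap \supp(v)} |\lambda p(\lambda)| \cdot \|v\|$, with a \emph{min} not a \emph{max}, and a ``dense spectrum'' does not rescue this since $v$ need not load the bad eigenvector. The correct move is again the one in the bilinear proof: use a one-dimensional instance $\bm B = b$, apply \cref{lem:extremal-quad} to find $b \in [0, L]$ at which $|b\, p^{(11)}(b^2)| \geq L/(2T+1)$, and then build the hard instance with that $b$. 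This is a non-adaptive lower bound; handling adaptive algorithms requires a separate ingredient (a resisting oracle or the analogue of the footnote accompanying the proof of \cref{thm:bilinear-lb}), which your shift/high-dimension remark gestures at but does not supply. In short: the extremal-polynomial endgame is right, but the middle reduction needs the $y_0 = y^*$ trick and a corrected direction for the spectral inequality, and adaptivity needs an explicit argument rather than a dimension count.
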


We conclude this section with two remarks.

\begin{remark}[Alternative performance criteria]\label{rem:quad-metrics}
	\cref{thm:quad-ub} bounds the gradient norm $\|\nabla f(z_{2T})\|$ in terms of the distance $\|z_0 - z^*\|$. In the bilinear setting,
    results extend to any combination of these performance criteria since distance and gradient norm are equivalent, see the discussion preceding~\cref{thm:bilinear-ub}.
    However, such extensions are false for the quadratic setting. Indeed, it is impossible to prove non-trivial rates (i.e., $1 - \delta$ for any $\delta > 0$) for the performance metrics $\frac{\|z_{2T} - z^*\|}{\| z_0 - z^*\|}$ and $\frac{\|z_{2T} - z^*\|}{\|\nabla f(z_0)\|}$ due to pathologically flat functions such as $f(x,y) = \gamma x^2$ at far initializations $z_0 = (x_0,0)$, where $\gamma > 0$ is sufficiently small. Similar counterexamples preclude non-trivial rates for $\tfrac{\norm{\nabla f(z_{2T})}}{\norm{\nabla f(z_0)}}$; consider the same construction, but now with $cx$ added to $f(x)$, for $c \neq 0$. 
\end{remark}

\begin{remark}[Faster convergence under stronger assumptions]
	\cref{thm:quad-ub} focuses on quadratic min-max problems in the smooth, (non-strongly) convex-concave setting. A line of work~\citep{azizian2020accelerating, Ibrahim_Azizian_Gidel_Mitliagkas_2020, Azizian_Mitliagkas_Lacoste-Julien_Gidel_2020} considers additional assumptions of strong convexity, strong concavity, and/or refined smoothness bounds on the individual blocks of $\bm H$, typically considering modifications of GDA using extragradients, momentum, optimism, etc. Such algorithms can sometimes automatically exploit further structure in quadratic objectives to obtain faster convergence. A potential direction for future work is to optimize stepsizes in these various settings in order to leverage additional such assumptions on the blocks of $\bm H$. Exponentially faster convergence can be obtained, for example, by appropriately adapting the slingshot stepsize schedule under the additional strongly-convex-strongly-concave assumption $\bm A, \bm C \succeq \mu \bm I$. See also the future work discussion in~\cref{sec:discussion}.
\end{remark}

\subsection{Alternative implementations and complex stepsizes}\label{ssec:linear-alternative}

For both the bilinear and quadratic settings, there are many equivalent implementations of the slingshot stepsize schedule that yield the same last iterate and thus the same convergence rate. This is due to invariances of the last iterate of GDA with respect to certain transformations of the stepsize schedule. For example, this includes arbitrary permutations of the stepsize schedule (since the GDA updates commute in the bilinear and quadratic settings). Another example is that for the bilinear setting, the stepsizes
$\alpha_{2t},\alpha_{2t+1},\beta_{2t},\beta_{2t+1}$ can be arbitrary solutions of the equations
\begin{align}\label{eq:css-bilinear:invariance}
    \alpha_{2t} + \alpha_{2t+1} =
    \beta_{2t} + \beta_{2t+1} = 0
    \qquad \text{ and } \qquad \alpha_{2t} \beta_{2t+1} = \alpha_{2t+1} \beta_{2t} = h_t^2 > 0\,,
\end{align}
since this is what is needed for the $2$-step identity~\eqref{eq:bilinear-pf-pairing} to hold in our analysis. These relate respectively to properties (iii) and (ii) of the general definition of slingshot stepsize schedules in~\cref{ssec:intro:slingshot}. Note that this system of equations~\eqref{eq:css-bilinear:invariance} has infinite solutions, not just $\alpha_{2t} = \beta_{2t+1} = h_t$ and $\beta_{2t} =\alpha_{2t+1} = -h_t$ as in~\cref{def:steps-bilinear}.

\par Perhaps the most striking equivalent implementation uses \emph{purely imaginary stepsizes}:
\begin{align}
	\alpha_{2t} = \beta_{2t} = i h_t \quad \text{ and } \quad \alpha_{2t+1} = \beta_{2t+1} = - i h_t
    \,, \qquad \forall t=0,1,2,\dots, T-1\,.
\end{align}
Note that this is a solution to~\eqref{eq:css-bilinear:invariance}.
This use of complex stepsizes is counterintuitive since the problem has purely real data. We are not aware of complex stepsizes being used anywhere else in optimization or min-max optimization. See the discussion of related work in~\cref{ssec:intro:prior}.
In this paper we focus on the real-stepsize implementation in~\cref{def:steps-bilinear} as it extends gracefully beyond bilinear objectives. (For general $f$, it is not even necessarily true that $f$ admits an analytic continuation.)

\par Another notable implementation uses random stepsizes, which are i.i.d.\ among pairs, and within pairs use the structure of the slingshot stepsize schedule. Unlike the aforementioned implementations which are all equivalent to each other, this produces a different rate for any finite number of iterations; however, it achieves the same optimal rate asymptotically; see~\cref{app:rand} for details.

\par Of course, all of this discussion of ``equivalent formulations'' requires exact arithmetic; in~\cref{app:stability} we isolate a particular implementation that is more numerically stable with respect to inexact updates and arithmetic.

\section{Min-max problems with nonlinear gradients}\label{sec:nonlinear}

We now turn to min-max problems $\min_x \max_y f(x,y)$ for which $\nabla f(x,y)$ is nonlinear. In this setting, the connection between GDA iterates and polynomials breaks down, necessitating a fundamentally different type of analysis from~\cref{sec:linear}. Nevertheless, we show that the same main results still hold: GDA can be made to converge by using time-varying, asymmetric stepsizes that are sometimes negative. The particular stepsizes we use are the following.

\begin{defin}[Slingshot stepsize schedule for convex-concave problems]\label{def:steps-nonlinear}
    Independently for each $t=0,...,T-1$, choose the stepsizes in iterations $2t$ and $2t+1$ as follows. With probability $1/2$,
    \begin{align*}
        \alpha_{2t} = -\beta_{2t} = \beta_{2t+1} = h \text{ \emph{and} }\alpha_{2t+1} = 0\,,
    \end{align*}
    and otherwise
     \begin{align*}
       -\alpha_{2t} =
        \beta_{2t} = \alpha_{2t+1} = h \text{ \emph{and} } \beta_{2t+1} = 0\,.
    \end{align*}
\end{defin}

This version of the slingshot stepsize schedule is largely similar to the version for min-max problems with linear gradients in~\cref{sec:linear}. A difference is that here we use randomness to break the symmetry for whether the $x$ or $y$ variable uses the negative stepsize in each batch of two iterations: $\beta_{2t} < 0$ in the first event, and $\alpha_{2t} < 0$ in the second event. We note that there are many versions of~\cref{def:steps-nonlinear} that lead to similar analyses and results as in this section. For example, we do not have to use only one negative step per two iterations. However, a key aspect of any convergent stepsize schedule (that is not satisfied by the stepsizes in~\cref{sec:linear}) is the following:

\begin{remark}[Positive net movement is necessary for convergence in convex-concave problems]\label{rem:huber}
    For the stepsizes in~\cref{def:steps-nonlinear}, $\E[\alpha_{2t} + \alpha_{2t+1}] = \E[\beta_{2t} + \beta_{2t+1}] > 0$. 
    If this were zero (as is the case for the optimal stepsizes for linear $\nabla f$ in~\cref{sec:linear}
    due to paired positive/negative stepsizes of equal magnitude),
    then GDA would not converge for smooth convex-concave problems due to pathological counterexamples that are locally linear. For example, consider $\min_{x \in \R} \max_{y \in \R} f(x,y) $ where $f(x,y)$ is the Huber loss function that is $\|x\|^2/2$ for $|x| \leq 1$ and otherwise $|x| - 1/2$. For GDA initializations bounded away from $[-1,1]$ for the $x$ variable, such schedules cycle since the positive/negative steps exactly cancel and lead to the same iterate in every other iteration.
\end{remark}

\par The use of negative stepsizes precludes the possibility of proving a ``$1$-step progress lemma'' for this algorithm. Nevertheless, following the same principle from the linear setting in~\cref{sec:linear}, here we establish progress after \emph{two} steps.

\begin{lemma}[2-step progress lemma]\label{lem:2step}
Let $f(x,y)$ be an $L$-smooth, $\mu$-strongly-convex-strongly-concave\footnote{
	Recall that a function $g$ is said to be $\mu$-strongly convex if $g(u) \geq g(v) + \langle \nabla g(v), u-v \rangle + \tfrac{\mu}{2}\|u-v\|^2$ for all $u,v$, and a function $f(x,y)$ is said to be $\mu$-strongly-convex-strongly-concave if $f(\cdot,y)$ and $-f(x,\cdot)$ are $\mu$-strongly convex for every $y$ and $x$. If $f$ is twice-differentiable (not required by our results), then this amounts to $\nabla_{xx}^2 f, -\nabla_{yy}^2 f \succeq \mu \bm{I}$. 
}
function with saddle point $z^* = (x^*,y^*)$. Then by using the slingshot stepsize schedule in~\cref{def:steps-nonlinear} with $h\leq \frac{1}{3L}$, the iterates of GDA satisfy
\begin{equation}\label{eq:lem2step}
\E\left[\|z_{2t+2}-z^*\|^2\right]\leq \left(1-h \mu\right)\E\left[\|z_{2t} - z^* \|^2\right] - \frac{h^2 (1-3Lh)}{2}\E\left[\|\nabla f(z_{2t})\|^2 \right]\,,
\end{equation}
where the expectation is over the randomness in the stepsize schedule.
\end{lemma}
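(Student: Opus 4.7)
The plan is to expand $\|z_{2t+2} - z^*\|^2$ from the explicit two-step update, take expectation over the two realizations of \cref{def:steps-nonlinear}, and then bound the resulting inner-product and squared terms. Writing $z := z_{2t}$, $g := \nabla f(z)$, $w := z + hg$, $u := z - hg$, $g_B := \nabla_x f(w)$, and $g_A := \nabla_y f(u)$, I first unroll the iterates: in one realization the intermediate point is $u$ and only the $y$-coordinate moves at step $2t+1$, while in the other the intermediate is $w$ and only the $x$-coordinate moves. A direct calculation then gives $\E[z_{2t+2} - z] = \tfrac{h}{2}(-g_B,\, g_A)$. Squaring and averaging over the two realizations yields
\[
\E\|z_{2t+2} - z^*\|^2 = \|z-z^*\|^2 + h\bigl[\langle g_A, y-y^*\rangle - \langle g_B, x-x^*\rangle\bigr] + \tfrac{h^2}{2}\|g\|^2 + \tfrac{h^2}{2}\bigl(\|g_A - \nabla_y f(z)\|^2 + \|g_B - \nabla_x f(z)\|^2\bigr).
\]

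The final ``gradient-difference'' term is bounded by $L^2 h^4\|g\|^2$ via $L$-Lipschitzness of $\nabla f$ and will be absorbed using the stepsize bound $h \le 1/(3L)$. The crux is to lower-bound the cross term $\langle g_B, x - x^*\rangle - \langle g_A, y-y^*\rangle$. I would apply $\mu$-strong monotonicity of the operator $F := (\nabla_x f, -\nabla_y f)$ at both probe points $w$ and $u$, each yielding $\langle F(\cdot), \cdot - z^*\rangle \ge \mu\|\cdot - z^*\|^2$ (using $F(z^*) = 0$), and then average the two inequalities. The crucial identity $w + u = 2z$ collapses the averaged left-hand side to $2\langle g_B, x-x^*\rangle - 2\langle g_A, y-y^*\rangle$ plus two ``mismatched'' inner products involving $\nabla_x f(u)$ and $\nabla_y f(w)$, which I swap for $g_B$ and $g_A$ via $L$-smoothness, with the resulting errors of order $Lh\|g\|\|z-z^*\|$ absorbed by Young's inequality. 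The right-hand side $\mu(\|w-z^*\|^2 + \|u-z^*\|^2) = 2\mu\|z-z^*\|^2 + 2\mu h^2\|g\|^2$ simultaneously delivers the required $-h\mu\|z-z^*\|^2$ contraction and a small bonus on $\|g\|^2$. A complementary handle comes from applying the smoothness descent lemma to $f(w)$ and $f(u)$: combined with the saddle-point inequality $f(u_x, y^*) \ge f(x^*, y^*) \ge f(x^*, w_y)$ (which discards a nonpositive function-value term), this contributes an additional $-h\|g\|^2 + O(Lh^2\|g\|^2)$ to the cross term, which is what ultimately produces a \emph{negative} $\|g\|^2$ coefficient of the desired form $-\tfrac{h^2(1-3Lh)}{2}$ after collecting all contributions.

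The main obstacle is this cross-term estimate. Extracting the full coefficient $h\mu$ on $\|z-z^*\|^2$ (rather than the lossy $h\mu/2$ one obtains from block-wise Bregman strong convexity) requires working with strong monotonicity of the full operator $F$ at both probe points simultaneously, using the identity $w+u = 2z$ to cleanly extract the correct inner product. The constant $3$ in the factor $1-3Lh$ arises from tight accounting of three distinct $O(Lh)$ error sources---the finite-difference ``variance'' $\|g_A - \nabla_y f(z)\|^2 + \|g_B - \nabla_x f(z)\|^2$, the smoothness slack in the $f(w) - f(u)$ descent lemma, and the Lipschitz mismatch in the monotonicity swap---each of which must be absorbed into the $\|g\|^2$ slot using $h \le 1/(3L)$. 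As a useful sanity check, in the bilinear case $f(x, y) = x^\top \bm{B} y$ (where $\mu = 0$ and $z^* = 0$) one computes directly that the cross term equals $-h\|\nabla f(z)\|^2$ exactly, which recovers the lemma with its stated constants and confirms that the bound is sharp.
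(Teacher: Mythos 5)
Your expansion of $\E\|z_{2t+2}-z^*\|^2$ in terms of $z := z_{2t}$, $g := \nabla f(z)$, $w := z + hg$, $u := z - hg$, $g_B := \nabla_x f(w)$, $g_A := \nabla_y f(u)$ is correct, and the identification that the crux is lower bounding $\langle g_B, x-x^*\rangle - \langle g_A, y-y^*\rangle$ is the right framing. However, the proposed treatment of the cross term has a genuine gap that I do not see how to repair within the sketched strategy.

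The specific step that fails is ``swap $\nabla_x f(u)$ for $g_B$ and $\nabla_y f(w)$ for $g_A$ via $L$-smoothness, with the resulting errors of order $Lh\|g\|\|z-z^*\|$ absorbed by Young's inequality.'' Averaging strong monotonicity at $w$ and $u$ produces
\begin{equation*}
\tfrac12\langle g_B + \nabla_x f(u), x-x^*\rangle - \tfrac12\langle g_A + \nabla_y f(w), y-y^*\rangle \;\ge\; \mu\|z-z^*\|^2 + \mu h^2\|g\|^2 - \tfrac{h}{2}\big[\langle g_B - \nabla_x f(u), \nabla_x f(z)\rangle + \langle g_A - \nabla_y f(w), \nabla_y f(z)\rangle\big],
\end{equation*}
so the residual you must control is $\tfrac12\langle g_B - \nabla_x f(u), x-x^*\rangle - \tfrac12\langle g_A - \nabla_y f(w), y-y^*\rangle$. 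Bounding this by Cauchy--Schwarz and then Young gives a term of the form $\epsilon\|z-z^*\|^2 + \tfrac{C L^2 h^2}{\epsilon}\|g\|^2$, and the $\epsilon\|z-z^*\|^2$ piece must be eaten by $h\mu\|z-z^*\|^2$, which forces $\epsilon \lesssim h\mu$ and hence a $\|g\|^2$ coefficient that scales like $L^2 h/\mu$. This blows up as $\mu\to 0$, yet the lemma must hold with $\mu=0$ (\cref{thm:gd-cc} invokes it with $\mu=0$). There is simply no $\|z-z^*\|^2$ budget available to absorb this error in the convex-concave case. Your own bilinear sanity check actually reveals the deeper problem: there the mismatch term $\tfrac12\langle g_B - \nabla_x f(u), x\rangle - \tfrac12\langle g_A - \nabla_y f(w), y\rangle = h\|g\|^2$ is not an error at all but carries the entire cross-term structure with a favorable sign. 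A one-shot Cauchy--Schwarz bound throws this structure away irrecoverably, and no amount of tighter bookkeeping of the three $O(Lh)$ sources recovers it. The ``complementary handle'' via the descent lemma plus the saddle inequalities $f(u_x,y^*) \ge f(x^*,y^*) \ge f(x^*,w_y)$ is too vague to verify and, as stated, does not obviously produce the needed inner-product quantity $\langle g_B, x-x^*\rangle - \langle g_A, y-y^*\rangle$; it relates function values at still more points that would then need their own control.

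For comparison, the paper's proof (\cref{ssec:2step}) avoids this entirely by exhibiting an exact sum-of-nonnegatives certificate: six co-coercivity inequalities $Q_\phi,Q_{\phi^*},Q_\psi,Q_{\psi^*}$ (which crucially involve \emph{function values}, not just gradients), one smoothness inequality $P(z_0,z')$ evaluated at the \emph{mixed} point $z'=(x_1^{(-)},y_1^{(+)})$ which is not an algorithm iterate, and seven sum-of-squares terms $S_1,\dots,S_7$. The identity~\eqref{eq:2step-identity} is then checked symbolically. The mixed point $z'$ and the function-value terms are precisely what allow the certificate to close without the lossy Cauchy--Schwarz step; your sketch uses neither, and that is where the argument breaks.
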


This lemma applies to (non-strongly) convex-concave objectives by taking $\mu=0$. Below, we show how this lemma directly implies polynomial convergence of GDA for convex-concave objectives in~\cref{ssec:cc} and exponential convergence for strongly-convex-strongly-concave objectives in~\cref{ssec:scsc}, and then we prove the lemma in~\cref{ssec:2step}.

\subsection{Convex-concave objectives}\label{ssec:cc}

Here we show that the slingshot stepsizes enable GDA to converge on unconstrained smooth convex-concave problems---a classical counterexample for GDA that generalizes the bilinear setting. 

\begin{theorem}[GDA converges for convex-concave objectives]\label{thm:gd-cc}
	Let $f(x,y)$ be an $L$-smooth, convex-concave function with saddle point $z^* = (x^*,y^*)$. Then for any $T \in \N$, $h < \frac{1}{3L}$, and initialization $z_0 = (x_0,y_0)$, by using the slingshot stepsize schedule in~\cref{def:steps-nonlinear}, the iterates of GDA satisfy
	\begin{equation}
	\frac{1}{T} \sum_{t=0}^{T-1} \E\left[\|\nabla f(z_{2t})\|^2 \right] \leq \frac{2 \|z_0 - z^*\|^2}{h^2\left( 1-3Lh\right)T}\,.
	\end{equation}
    In particular, using stepsize parameter $h = \frac{c}{L}$ for any constant $c \in (0,\frac{1}{3})$ ensures
    \begin{align*}
         \frac{1}{T} \sum_{t=0}^{T-1} \E\left[\|\nabla f(z_{2t})\|^2 \right]  \lesssim \frac{L^2\|z_0 -z^*\|^2}{T}\,.
    \end{align*}
\end{theorem}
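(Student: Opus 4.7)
The plan is to obtain \cref{thm:gd-cc} as an essentially immediate corollary of the 2-step progress lemma (\cref{lem:2step}). Since the theorem assumes $f$ is convex-concave (without strong convexity), I would instantiate \cref{lem:2step} with $\mu = 0$, which gives the one-step (in the paired sense) recursion
\begin{equation*}
\E\!\left[\|z_{2t+2}-z^*\|^2\right]\ \leq\ \E\!\left[\|z_{2t}-z^*\|^2\right]\ -\ \frac{h^2(1-3Lh)}{2}\,\E\!\left[\|\nabla f(z_{2t})\|^2\right].
\end{equation*}
Observe that under the stepsize restriction $h < \tfrac{1}{3L}$, the coefficient $\tfrac{h^2(1-3Lh)}{2}$ is strictly positive, so this recursion exhibits monotone (in expectation) decrease of $\|z_{2t}-z^*\|^2$.

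Next I would telescope this inequality from $t=0$ to $t=T-1$. After cancellation, the left side contributes $\E[\|z_{2T}-z^*\|^2]$ and the right side contributes $\|z_0-z^*\|^2$ minus $\tfrac{h^2(1-3Lh)}{2}\sum_{t=0}^{T-1}\E[\|\nabla f(z_{2t})\|^2]$. Rearranging and discarding the nonnegative term $\E[\|z_{2T}-z^*\|^2]$ on the left yields
\begin{equation*}
\frac{h^2(1-3Lh)}{2}\sum_{t=0}^{T-1}\E\!\left[\|\nabla f(z_{2t})\|^2\right]\ \leq\ \|z_0-z^*\|^2.
\end{equation*}
Dividing both sides by $T \cdot \tfrac{h^2(1-3Lh)}{2}$ gives the claimed bound on the running average of expected squared gradient norms.

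Finally, for the ``in particular'' statement with $h = c/L$ for a constant $c \in (0, 1/3)$, I would simply plug in and note that $h^2(1-3Lh) = c^2(1-3c)/L^2$ is a positive constant times $1/L^2$, which yields the stated $\mathcal{O}(L^2\|z_0-z^*\|^2/T)$ rate. The main obstacle here is not this theorem at all but the underlying \cref{lem:2step}: all real work (handling the nonlinearity of $\nabla f$, the randomization in~\cref{def:steps-nonlinear}, and the expansiveness of the individual negative-stepsize iteration) is already packaged into that lemma. Given \cref{lem:2step}, the proof of \cref{thm:gd-cc} is a routine telescoping argument with no additional subtlety beyond tracking the positivity of $1-3Lh$.
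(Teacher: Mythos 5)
Your proposal is correct and follows exactly the same route as the paper's proof: instantiate \cref{lem:2step} with $\mu=0$, rearrange so the gradient term is isolated, telescope over $t=0,\dots,T-1$, discard $\E[\|z_{2T}-z^*\|^2]\geq 0$, and divide through. There is no gap, and you are right that all the substantive work lives in \cref{lem:2step}.
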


\begin{proof}
For $t=0,...,T-1$, applying~\cref{lem:2step} with $\mu = 0$ gives
\begin{equation}
    \frac{h^2(1-3 Lh)}{2} \,\E\left[\|\nabla f(z_{2t})\|^2 \right] 
    \leq
    \E\left[\|z_{2t} - z^* \|^2\right] - \E\left[\|z_{2t+2}-z^*\|^2\right] 
    \,.
\end{equation}
Summing these inequalities over $t$, telescoping, and crudely bounding $\E\|z_{2T} - z^*\|^2 \geq 0$ gives
\begin{align*}
    \frac{h^2 (1-3L h )}{2} \sum_{t=0}^{T-1}\E\left[\|\nabla f(z_{2t})\|^2 \right]
    \leq
    \E\left[\|z_{0} - z^* \|^2\right]
    \,.
\end{align*}
Re-arranging and dividing by $T$ completes the proof.
\end{proof}

\cref{thm:gd-cc} shows that on average, the iterates are nearly stationary. In practice, this can be implemented by choosing a random stopping time $\tau$ uniformly at random from $\{0, \dots, T-1\}$ and then outputting the iterate $z_{2\tau}$. Indeed,~\cref{thm:gd-cc} then implies that
\begin{equation}
\E\left[\|\nabla f(z_{2\tau})\|^2 \right] 
\lesssim \frac{L^2\|z_0 -z^*\|^2}{T}
\end{equation}
where the expectation is now over the choice of stopping time $\tau$ as well as the randomness in the slingshot stepsize schedule. Alternatively, one can output the ``best iterate'' in the trajectory with minimal $\|\nabla f(z_{2t})\|^2$; this enjoys the same performance guarantee in the worst case and can provide better performance in practice as it is adaptive to the problem. 

\begin{remark}[High-probability bound]
    A simple way to convert this convergence result from expectation to high probability is as follows. By~\cref{thm:gd-cc}, in expectation, the best iterate among $T$ iterations satisfies $\|\nabla f(z)\|^2 \leq R_T := \frac{2 \|z_0 - z^*\|^2}{h^2\left( 1-3Lh\right)T}$. Hence, by Markov's inequality, the best iterate satisfies $\|\nabla f(z)\|^2 \leq 2R_T$ with probability at least $1/2$. By a standard boosting argument, taking the best iterate over $\mathcal{O}(\log 1/\delta)$ runs amplifies this success probability to $1 - \delta$. 
\end{remark}

This $\mathcal{O}(1/T)$ rate matches popular algorithms that modify GDA beyond just stepsizes, e.g., extragradient \citep{Solodov_Svaiter_1999,gorbunov2022extragradient} and optimistic GDA \citep{Gorbunov_Taylor_Gidel, cai2022tight}, albeit for the best iterate rather than the last iterate. An accelerated rate of $\mathcal{O}(1/T^2)$ was recently achieved by a clever combination of extragradient and anchoring \citep{yoon2021accelerated}, and it is an interesting question if our results can be similarly accelerated, see the discussion in~\cref{sec:discussion}.~\cref{fig:cc-convergence} provides an illustrative numerical comparison of these algorithms.

\begin{figure}[ht]
    \centering
    \begin{subfigure}[t]{0.47\linewidth}
        \centering
        \includegraphics[width=\linewidth]{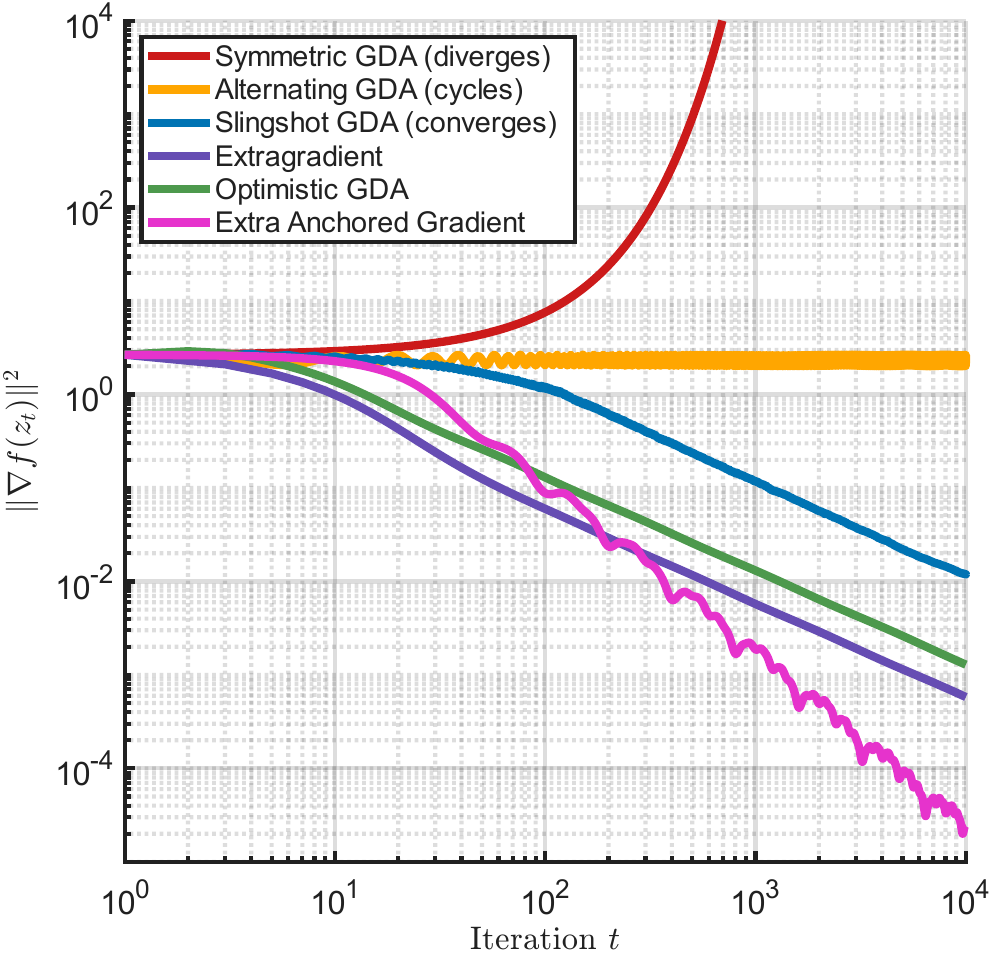}
        \caption{\footnotesize 
        Empirical comparison of the convex-concave version of the slingshot stepsize schedule (\cref{def:steps-nonlinear}).
        }
        \label{fig:cc-subfig-a}
    \end{subfigure}
    \hfill
    \begin{subfigure}[t]{0.47\linewidth}
        \centering
        \includegraphics[width=\linewidth]{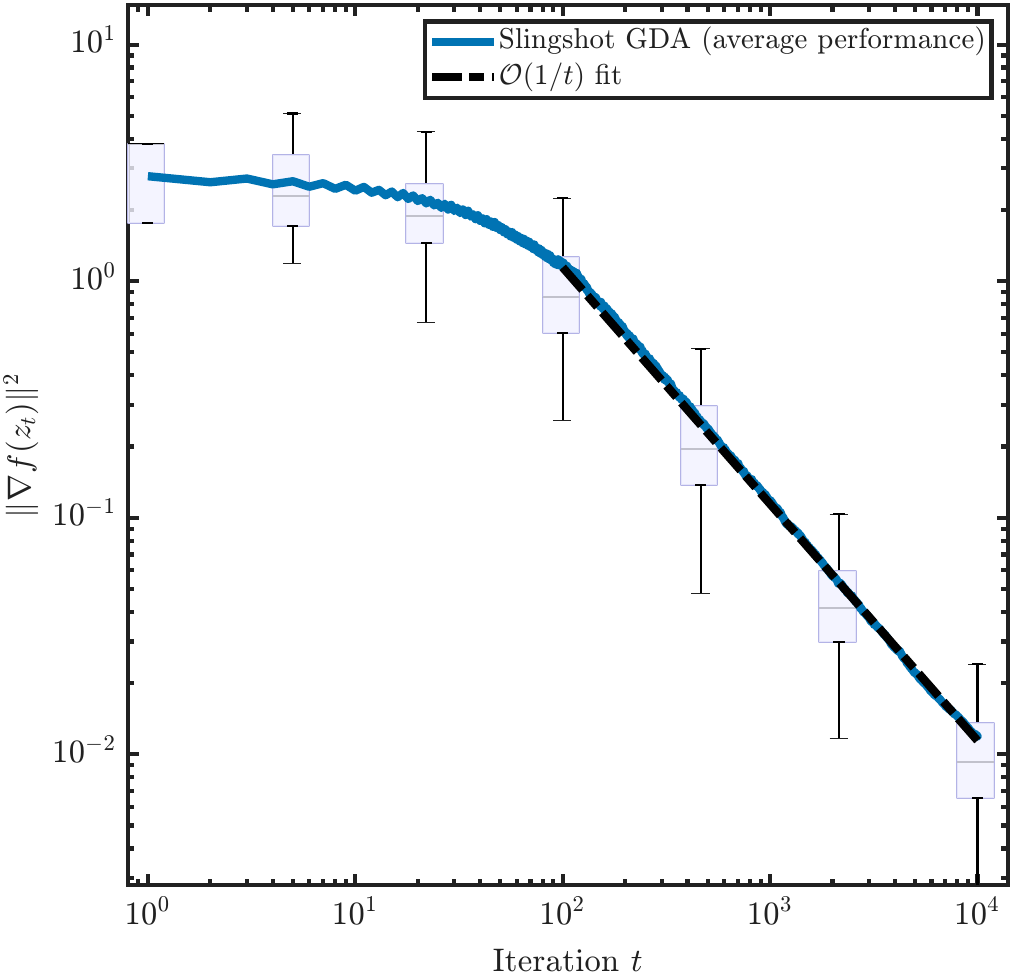}
        \caption{\footnotesize 
        Variability of slingshot stepsizes across $1000$ runs. The $\mathcal{O}(1/\epsilon)$ empirical rate matches the theory in~\cref{ssec:cc}.}
        \label{fig:cc-subfig-b}
    \end{subfigure}
    \caption{\footnotesize 
        Illustrative numerical comparison on a $1$-smooth bilinear problem $\min_{x \in \mathbb{R}^{100}} \max_{y \in \mathbb{R}^{100}} x^\top \bm{B} y$ where $\bm{B} = \text{diag}(2^0, 2^{-1}, \dots, 2^{-99})$. All algorithms are initialized at the all-ones vector. Regardless of the stepsize choice, symmetric GDA diverges and alternating GDA cycles. Here we run the slingshot stepsize schedule for general convex-concave problems (\cref{def:steps-nonlinear}); using the version tailored to smooth bilinear problems (\cref{def:quadsteps}) leads to much faster convergence than all plotted algorithms, with $\|\nabla f(z_t)\|^2 \approx 5 \times 10^{-7}$ at the last iterate $t = 10^4$. We set $h=2/(9L)$ in \cref{def:steps-nonlinear} which minimizes the rate in~\cref{thm:gd-cc}. Extragradient, optimistic GDA, and extra anchored gradient are run with their standard stepsize prescriptions. See~\cref{app:experimental-setup} for further details. 
        }
    \label{fig:cc-convergence}
\end{figure}

\subsection{Strongly-convex-strongly-concave objectives}\label{ssec:scsc}

In the case of strongly-convex-strongly-concave objectives, GDA converges with constant positive stepsizes (this is the only such setting in the paper), but at a slow rate---namely $\Theta(\kappa^2 \log 1/\eps)$ is the fastest possible rate for symmetric GDA~\citep{azizian2020accelerating}, improvable to $\mathcal{O}(\kappa^{1.5} \log 1/\eps)$ using alternating GDA~\citep{Lee_Cho_Yun_2024}.\footnote{GDA with non-negative alternating stepsizes can asymptotically match the accelerated rate $\Theta(\kappa \log 1/\eps)$, but only if one considers local convergence from arbitrarily close initialization and additionally assumes that the Hessian is continuous in this local neighborhood around optimality~\citep{Zhang_Wang_Lessard_Grosse_2022}. \cref{thm:gd-scsc} shows that the slingshot stepsizes achieve this optimal rate globally and without any such additional assumptions.} Here we show that the slingshot stepsize schedule achieves the accelerated rate $\Theta(\kappa \log 1/\eps)$, which matches extragradient and optimistic GDA~\citep{mokhtari2020unified}, and moreover is optimal among arbitrary first-order methods without further assumptions on the problem  setting~\citep{Azizian_Mitliagkas_Lacoste-Julien_Gidel_2020}.

\begin{theorem}[GDA converges for strongly-convex-strongly-concave objectives]\label{thm:gd-scsc}
	Let $f(x,y)$ be an $L$-smooth, $\mu$-strongly-convex-strongly-concave function with saddle point $z^* = (x^*,y^*)$. Denote $\kappa := L/\mu$. Then for any $T \in \N$ and any initialization $z_0 = (x_0,y_0)$, by using the stepsize schedule in~\cref{def:steps-nonlinear} with parameter $h \leq \frac{1}{3L}$, the iterates of GDA satisfy
	\begin{equation}
	\E\left[\|z_{2T} - z^*\|^2 \right] \leq \left( 1-h \mu\right)^T\|z_0 - z^*\|^2\,.
	\end{equation}
	In particular, setting $h = \tfrac{1}{3L}$, then $\E\left[\|z_{2T} - z^*\|^2 \right]\leq \eps$ after $\cO(\kappa\log \frac{\|z_0 - z^*\|^2}{\eps})$ steps. 
\end{theorem}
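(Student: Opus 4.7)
The plan is to derive the theorem as an almost immediate corollary of the 2-step progress lemma (\cref{lem:2step}), since in the strongly-convex-strongly-concave setting with $h \leq \frac{1}{3L}$ we have $1 - 3Lh \geq 0$, so the gradient-norm term on the right-hand side of \eqref{eq:lem2step} is non-positive and can be discarded. Concretely, the first step is to apply \cref{lem:2step} with the given $\mu > 0$ and drop the second term to obtain the one-sided contraction
\begin{equation*}
\E[\|z_{2t+2} - z^*\|^2] \;\leq\; (1 - h\mu)\,\E[\|z_{2t} - z^*\|^2]
\end{equation*}
for each $t = 0,1,\dots,T-1$.

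Next, I would iterate this recursion in $t$. Because the stepsize randomness in different pairs of iterations is independent, the tower property of conditional expectation allows the recursion to telescope cleanly, yielding
\begin{equation*}
\E[\|z_{2T} - z^*\|^2] \;\leq\; (1 - h\mu)^T \|z_0 - z^*\|^2,
\end{equation*}
which is exactly the claimed convergence bound.

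For the complexity statement, I would then specialize to $h = \frac{1}{3L}$, so the per-pair contraction factor becomes $1 - \frac{\mu}{3L} = 1 - \frac{1}{3\kappa}$. Using the standard inequality $\log\tfrac{1}{1-x} \geq x$ for $x \in (0,1)$, the condition $(1 - \tfrac{1}{3\kappa})^T \|z_0 - z^*\|^2 \leq \eps$ is implied by $T \geq 3\kappa \log\frac{\|z_0 - z^*\|^2}{\eps}$, giving the claimed $\mathcal{O}(\kappa \log \tfrac{\|z_0-z^*\|}{\eps})$ iteration bound (absorbing the constant $2$ between $T$ and the total iteration count $2T$, and the factor of $2$ in the logarithm, into the $\mathcal{O}(\cdot)$).

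There is no real obstacle here: all the work is in \cref{lem:2step}, and the remaining derivation is a routine telescoping plus the usual translation between geometric contraction rates and iteration complexity. The only point warranting mild care is verifying that the expectations telescope correctly in the randomized schedule, which follows from conditioning on the history of stepsize choices before iteration $2t$.
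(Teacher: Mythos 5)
Your proposal is correct and follows essentially the same route as the paper: apply the $2$-step lemma with the given $\mu$, observe that $h \leq \tfrac{1}{3L}$ makes the gradient-norm term non-positive so it can be dropped, and recurse $T$ times. The only additions you make beyond the paper's very terse proof are the explicit remark about using the tower property to justify the telescoping of conditional expectations and the elementary computation converting the geometric rate into the stated $\mathcal{O}(\kappa \log \tfrac{\|z_0 - z^*\|}{\eps})$ iteration count; both are correct and are implicitly assumed by the paper.
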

\begin{proof}
Applying~\cref{lem:2step} and dropping the gradient term (as it is non-negative by the assumption on $h$) yields the contraction
$\E[\|z_{2t+2}-z^*\|^2]\leq (1-h \mu)\E[\|z_{2t} - z^* \|^2]$. Recurse $T$ times.
\end{proof}

\begin{remark}[High-probability bound]
    A simple way to convert this convergence result from expectation to high probability is as follows. By Markov's inequality, the last iterate satisfies $\|z_{2T} - z^*\|^2 \leq \eps$ with probability at least $1 - \delta$ if\ $\E \|z_{2T} - z^*\|^2 \leq \eps\delta$. By~\cref{thm:gd-scsc}, this is ensured by running $\mathcal{O}(\kappa \log 1/\delta)$ additional iterations.
\end{remark}

\subsection{Proof of $2$-step lemma (simplified)}\label{ssec:2step}

We now prove the 2-step progress guarantee underlying~\cref{lem:2step}. To keep the main argument transparent, we prove here a slightly weaker version of the lemma, in which the allowable stepsize range $h \lesssim 1/L$ and the coefficient on the progress term $\|\nabla f(z_{2t})\|^2$ are worse by universal constants. This weaker estimate is already sufficient to obtain the convergence guarantees in~\cref{thm:gd-cc,thm:gd-scsc} up to universal constants. The sharper constants stated in~\cref{lem:2step} are proved in~\cref{app:2-step} using the framework of performance estimation problems (PEP). The PEP analysis is included not only to improve constants, but also to illustrate how PEP can help identify the right combination of smoothness, convexity-concavity, and auxiliary-point inequalities; in particular, the ability of PEP to systematically search over convergence proofs was helpful for our design of the slingshot stepsize schedule and directly motivated the simpler, more interpretable proof below.

To simplify notation, without loss of generality let $t = 0$ (so that we measure progress from $z_0$ to $z_2$) and let $L=1$ (by rescaling the objective and stepsize). Recall that the randomness of the proposed stepsize schedule in~\cref{def:steps-nonlinear} leads to two possible trajectories, each with probability $1/2$. Denote these two trajectories as
\begin{equation}\label{eq:2step-traj1}
\begin{aligned}
x_{1}^{(-)} &= x_{0} - h \nabla_x f(x_0,y_0),
\qquad &
x_{2}^{(-)} &= x_{1}^{(-)}\,,
\\
y_{1}^{(-)} &= y_{0} - h \nabla_y f(x_0, y_0),
\qquad  &y_{2}^{(-)}&= y_{1}^{(-)} +h  \nabla_y f\left(x_1^{(-)}, y_1^{(-)}\right)\,,
\end{aligned}
\end{equation}
and
\begin{equation}\label{eq:2step-traj2}
\begin{aligned}
x_{1}^{(+)} &= x_{0} + h \nabla_x f(x_0,y_0),
\qquad
&
x_{2}^{(+)} &= x_{1}^{(+)}-h \nabla_x f\left(x_1^{(+)}, y_1^{(+)}\right)\,,
\\
y_{1}^{(+)} &= y_{0} + h  \nabla_y f(x_0, y_0)
,
\qquad  &y_{2}^{(+)}&= y_{1}^{(+)} \,.
\end{aligned}
\end{equation}
We prove the inequality
\begin{equation}\label{eq:2step-pf}
 \frac{1}{2} \left( \left\|z_{2}^{(-)}-z^*\right\|^2 + \left\|z_{2}^{(+)}-z^*\right\|^2 \right) \leq \left(1- h \mu\right) \|z_0 - z^*\|^2 - \frac{h^2}{2} \left(1-6h + 2h\mu-2h^2\right) \left\|\nabla f(z_0)\right\|^2 \,,
\end{equation}
where for shorthand $z_0 := (x_0,y_0)$, $z_2^{(-)} := \big(x_2^{(-)}, y_2^{(-)}\big)$ and $z_2^{(+)} := \big(x_2^{(+)}, y_2^{(+)}\big)$. This inequality~\eqref{eq:2step-pf} implies the 2-step progress in~\cref{lem:2step} except with slightly weaker constants; indeed, $1-6h + 2h\mu-2h^2 \ge 1-7h$ for $h \leq 1/7$, whereas the analogous term (after normalizing $L=1$) in~\cref{lem:2step} is $1-3h$ for $h \leq 1/3$. See~\cref{app:2-step} for a proof of~\cref{lem:2step} with the sharper stated constants.

\paragraph*{Proof of~\eqref{eq:2step-pf}.} Begin by expanding the squared norms:
\begin{equation} \nonumber
\begin{aligned}
\left\|z_2^{(+)}-z^*\right\|^2
&= \left\|z_1^{(+)}-z^*\right\|^2
  +h^2\left\|\nabla_x f(z_1^{(+)})\right\|^2
  +2h\left\langle \nabla_x f(z_1^{(+)}), x^*-x_1^{(+)} \right\rangle\,,\\
\left\|z_2^{(-)}-z^*\right\|^2
&= \left\|z_1^{(-)}-z^*\right\|^2
  +h^2\left\|\nabla_y f(z_1^{(-)})\right\|^2
  +2h\left\langle \nabla_y f(z_1^{(-)}), y_1^{(-)}-y^* \right\rangle\,.
\end{aligned}
\end{equation}
Next, we bound these inner-product terms using (strong) convexity-concavity of $f$. Consider first the $(+)$ trajectory. By $\mu$-strong convexity of $f(\cdot,y_1^{(+)})$ and then $\mu$-strong concavity of $f(x^*,\cdot)$,
\begin{align*}
    \left\langle \nabla_x f(z_1^{(+)}), x^*-x_1^{(+)} \right\rangle
    &\leq f(x^*,y_1^{(+)}) - f(z_1^{(+)}) - \frac{\mu}{2}\|x_1^{(+)} - x^*\|^2
    \\ &\leq f(z^*) - f(z_1^{(+)}) - \frac{\mu}{2}\|z_1^{(+)} - z^{*}\|^2\,.
\end{align*}
An analogous argument for the $(-)$ trajectory gives
\begin{align*}
    \left\langle \nabla_y f(z_1^{(-)}), y_1^{(-)}-y^* \right\rangle
    &\leq
    f(z_1^{(-)}) - f(x_1^{(-)},y^*)
    - \frac{\mu}{2} \|y_1^{(-)} - y^*\|^2
    \\ &\leq 
    f(z_1^{(-)}) - f(z^*)
    - \frac{\mu}{2} \|z_1^{(-)} - z^*\|^2
    \,.
\end{align*}
Combining the above three displays yields
\begin{align}
& \frac{1}{2}\left(
  \left\|z_2^{(+)}-z^*\right\|^2
  +\left\|z_2^{(-)}-z^*\right\|^2
\right) \label{eq:average-bound}
\\ & \leq (1 - h\mu) \cdot \underbrace{\frac{1}{2}\left(
  \left\|z_1^{(+)}-z^*\right\|^2
  +\left\|z_1^{(-)}-z^*\right\|^2
\right)}_{\circled{1}}
+\frac{h^2}{2}\underbrace{\left(
  \left\|\nabla_x f(z_1^{(+)})\right\|^2
  +\left\|\nabla_y f(z_1^{(-)})\right\|^2
\right)}_{\circled{2}}
+ h\underbrace{\bigl(f(z_1^{(-)}) -f(z_1^{(+)})\bigr)}_{\circled{3}}\,. \nonumber
\end{align}
We bound $\circled{1},\circled{2},\circled{3}$ to obtain the right hand side of~\eqref{eq:2step-pf}. For $\circled{1}$, we have the identity
\begin{align*}
\circled{1} = \frac{1}{2}\left(
  \left\|z_1^{(+)}-z^*\right\|^2
  +\left\|z_1^{(-)}-z^*\right\|^2
\right) =  \|z_0-z^*\|^2 + h^2\|\nabla f(z_0)\|^2.
\end{align*}
To bound $\circled{2}$, use the algorithm definition, use $1$-Lipschitzness of $\nabla f$, and simplify:
\begin{align*}
\circled{2}
  &= \left\|\nabla_x f(z_0+h\nabla f(z_0))\right\|^2
+\left\|\nabla_y f(z_0-h\nabla f(z_0))\right\|^2\\
&\leq
\left(
  \left\|\nabla_x f(z_0)\right\|
  + h %
  \left\|\nabla f(z_0)\right\|
\right)^2
+
\left(
  \left\|\nabla_y f(z_0)\right\|
  + h %
  \left\|\nabla f(z_0)\right\|
\right)^2\\
&\leq
\left(1+4h+2h^2\right)
\left\|\nabla f(z_0)\right\|^2.
\end{align*}
Finally, to bound $\circled{3}$, use the algorithm definition, use $1$-smoothness of $f$, and simplify:
\begin{align*}
\circled{3}
&= f(z_0 - h\nabla f(z_0)) - f(z_0+h\nabla f(z_0))\\
&\leq \left(f(z_0) - h\|\nabla f(z_0)\|^2 + \frac{h^2}{2}\|\nabla f(z_0)\|^2\right)-\left(f(z_0) + h\|\nabla f(z_0)\|^2 - \frac{h^2}{2}\|\nabla f(z_0)\|^2\right)\\
&= (-2h+h^2)\|\nabla f(z_0)\|^2.
\end{align*}
Plugging these bounds for $\circled{1},\circled{2},\circled{3}$ into~\eqref{eq:average-bound} yields the desired inequality~\eqref{eq:2step-pf}.

\section{Connections to consensus optimization and finite differencing}\label{sec:connections}

Here we interpret the slingshot stepsize schedule through the second-order expansion of its dynamics. As discussed in~\cref{ssec:intro:cont}, this provides a complementary interpretation of the key two-step progress phenomenon and applies at a higher level of generality (the Taylor expansion does not require convexity-concavity), although we do not extract convergence guarantees in this section. This interpretation provides connections to finite-differencing techniques as well as to Hamiltonian gradient descent and the consensus optimization algorithm. In order to focus on the main conceptual ideas, the derivations in this section are kept informal and we assume for simplicity that $f \in \mathcal{C}^2$. (Note that none of our results in~\cref{sec:nonlinear} require this assumption; in particular, our results only require the standard assumption that $\nabla f$ is Lipschitz continuous.) For concreteness and simplicity, we focus on the implementations of the slingshot stepsize schedule in~\cref{sec:linear} and~\cref{sec:nonlinear}; see~\cref{app:second-order} for the general case. See also~\cref{ssec:plausiblity:approx} for a high-level overview of the derivations in this section.

\subsection{Linear version of the slingshot stepsize schedule}\label{ssec:connections:linear}

\paragraph{Second-order expansion.}
Recall from~\cref{sec:linear} that for problems with linear gradients $\nabla f$, we proposed versions of the slingshot stepsize schedule that use paired positive/negative steps of equal magnitude (see \cref{def:steps-bilinear} and \cref{def:quadsteps} for the settings of bilinear and quadratic objectives, respectively). By concatenating $z_t = (x_t,y_t)$, this amounts to a two-step update which alternates moving in a positive and negative gradient direction of $f$:
\begin{equation}\label{eq:lin_css_update}
\begin{aligned}
z_{2t+1}&= z_{2t}+h\nabla f(z_{2t}) \\ z_{2t+2}&= z_{2t+1}-h\nabla f(z_{2t+1}).
\end{aligned}
\end{equation}
Although this version of the slingshot stepsize schedule is designed specifically for the setting of linear $\nabla f$, it is insightful to consider the dynamics for general objectives with nonlinear $\nabla f$. Specifically, below we expand the two-step update from $z_{2t}$ to $z_{2t+2}$ for general nonlinear $\nabla f$. (For linear $\nabla f$, as in~\cref{sec:linear}, all the following approximations are equalities.) A Taylor approximation gives 
\begin{align}
    \nabla f(z_{2t+1}) = \nabla f(z_{2t} + h \nabla f(z_{2t})) 
    \approx \nabla f(z_{2t}) +h \nabla^2 f(z_{2t}) \nabla f(z_{2t})
    \label{eq:lin_css_taylor}
\end{align}
up to higher-order error terms.
Plugging this into~\eqref{eq:lin_css_update} yields 
\begin{equation}\label{eq:2nd_order_expansion_linear}
z_{2t+2}
=
z_{2t} + h \nabla f(z_{2t}) - h \nabla f(z_{2t+1})
\approx
z_{2t}-h^2 \nabla^2 f(z_{2t})\nabla f(z_{2t})
\end{equation}
up to higher-order error terms.
Note that this expansion~\eqref{eq:2nd_order_expansion_linear} is invariant with respect to the order of the positive/negative steps in~\eqref{eq:lin_css_update}. 

\paragraph{Geometric interpretation.} The second-order expansion~\eqref{eq:2nd_order_expansion_linear} quantifies the \emph{non-reversibility} of gradient descent: performing a step of gradient descent followed by a step of gradient ascent, both of the same stepsize $h$, does not return to the starting point $z$. Instead, there is a second-order drift $-h^2 \nabla^2 f(z) \nabla f(z)$. 
This drift's direction $\nabla^2 f(z) \nabla f(z)$ is the directional derivative of the gradient along itself (described more below) and can be interpreted as the \emph{acceleration} of gradient ascent or gradient descent; this is perhaps most easily seen in continuous-time: 
\begin{align*}
    \frac{d}{dt}z(t) = \plusminus \nabla f(z(t)) 
  \qquad \Longrightarrow  \qquad \frac{d^2}{dt^2} z(t) = \nabla^2 f(z(t)) \nabla f(z(t))\,.
\end{align*}
Geometrically, the invariance of the second-order expansion~\eqref{eq:2nd_order_expansion_linear} with respect to the order of the ascent and descent step amounts to the fact that the Lie bracket of the vector fields $\nabla f$ and $-\nabla f$ is zero, i.e., the commutator $[\nabla f, -\nabla f]$ vanishes.
 
\paragraph{Connection to HGD} The second order expansion~\eqref{eq:2nd_order_expansion_linear} of this two-step update is equivalent to one step of Hamiltonian gradient descent (HGD).
HGD is an algorithm used for solving multiplayer games \citep{Balduzzi_Racaniere_Martens_Foerster_Tuyls_Graepel_2018, Mescheder_Nowozin_Geiger_2017,Abernethy_Lai_Wibisono_2021} and amounts to taking gradient descent steps in the concatenated variable $z = (x,y)$ with respect to the Hamiltonian function $\Phi(z)=\frac{1}{2}\|\nabla f(z)\|^2$, the idea being that moving in a descent direction of the Hamiltonian encourages stationarity. Note that both variables minimize the same potential, effectively colluding rather than competing as in the original min-max problem. The update of HGD is given by
\begin{equation}\label{eq:hgd}
z_{t+1} = z_t - \gamma\nabla \Phi(z_t) = z_t - \gamma \nabla^2 f(z_t) \nabla f(z_t)\,,
\end{equation}
where the second equality is by the chain rule. Observe that this HGD update~\eqref{eq:hgd} exactly matches the second-order approximation~\eqref{eq:2nd_order_expansion_linear} of the slingshot stepsize schedule.

\par HGD is known to converge quickly in some settings, for example on functions whose Hamiltonians are smooth and satisfy a Polyak-Lojasiewicz condition~\citep{Abernethy_Lai_Wibisono_2021}. However, these conditions on the Hamiltonian do not translate to standard assumptions on $f$; indeed HGD fails to converge even on simple objectives that are smooth and convex-concave, such as the $1$-dimensional counterexample in~\cref{rem:huber}: consider the Huber function $f(x,y)$ that is $x^2/2$ for $|x| \leq 1$ and otherwise is $|x| - 1/2$. If HGD is initialized outside of $[-1,1]$, then $\nabla \Phi = 0$, hence HGD updates never move.

\paragraph{Connection to finite differencing.} Note that the GDA algorithm uses only first-order information, yet effectively yields a two-step update~\eqref{eq:2nd_order_expansion_linear} that incorporates second-order information. This use of higher-order information is made possible because the  slingshot stepsize schedule enables GDA to implicitly implement finite differencing. In particular, here the relevant second-order information is the directional derivative $D_{\nabla f(z) / \|\nabla f(z)\|} \nabla f(z)$ of the function $\nabla f(\cdot)$ in the direction $\nabla f(z) / \|\nabla f(z)\|$, evaluated at $z$. That is,
\begin{equation}\label{eq:finite_diff_approx}
D_{\nabla f(z) / \|\nabla f(z)\|} \nabla f(z)
=
\nabla^2 f(z)\frac{\nabla f(z)}{\|\nabla f(z)\|} \approx \frac{\nabla f(z + h\nabla f(z)) - \nabla f(z)}{h\|\nabla f(z)\| }\,.
\end{equation}
Indeed, this is precisely the key Taylor expansion in~\eqref{eq:lin_css_taylor}, modulo rearranging and rescaling. 

\par Conceptually, this use of finite differencing is similar to how zeroth-order optimization algorithms compute directional derivatives of a function $f$ using finite-difference approximations \citep{nesterov2017random, conn2009introduction}. A subtle but important distinction is that in zeroth-order methods, the finite-differencing approximation $D_{u / \|u\|}\nabla f(z) \approx \frac{\nabla f(z +  u) - \nabla f(z)}{\|u\|}$ uses two evaluations within a small distance $\|u\| \ll 1$, which leads to simple bounds for the approximation~\citep{nesterov2017random}. In contrast, the slingshot stepsize schedule approximates the directional derivative of $\nabla f(z)$ by using two evaluations within distance $h \|\nabla f(z)\|$, and unless it is already known that the algorithm is converging, this quantity is not actually known to be small. This necessitates different types of analyses than is standard in the finite-differencing literature.

\subsection{Nonlinear version of the slingshot stepsize schedule}\label{ssec:connections:nonlinear}

\paragraph{Second-order expansion.} We now provide the analogous expansion for two steps of GDA using the slingshot stepsize schedule for nonlinear $\nabla f$ in~\cref{def:steps-nonlinear}. Recall from~\cref{sec:nonlinear} that this schedule uses randomization to break the symmetry between whether $x$ or $y$ uses a negative stepsize. Following the notation in~\cref{ssec:2step}, let us distinguish these two trajectories using superscripts $(+)$ and $(-)$. The first step of GDA in these two trajectories is $z_{2t+1}^{(+)} = z_{2t}+h \nabla f(z_{2t})$ and $z_{2t+1}^{(-)} = z_{2t}-h \nabla f(z_{2t})$. Identical to~\eqref{eq:lin_css_taylor}, a Taylor approximation of $\nabla f$ in both cases yields
\begin{equation}
	\nabla f\left(z_{2t+1}^{(+)}\right) \approx \nabla f(z_{2t}) +h\nabla^2 f(z_{2t}) \nabla f(z_{2t}),\quad \nabla f\left(z_{2t+1}^{(-)}\right) \approx \nabla f(z_{2t}) -h\nabla^2 f(z_{2t}) \nabla f(z_{2t}).
\end{equation}
Plugging this gradient approximation into the second step's update gives the two-step expansion:
\begin{equation}\label{eq:2nd_order_expansion_seperated}
	\begin{aligned}
		&x_{2t+2}^{(+)} \approx x_{2t}- h^2 \left[\nabla^2 f(z_{2t}) \nabla f(z_{2t}) \right]_x,\quad &x_{2t+2}^{(-)} &= x_{2t}- h\nabla_x f(z_{2t})\,, \\
		&y_{2t+2}^{(+)} = y_{2t}+h \nabla_y f(z_{2t}),\quad &y_{2t+2}^{(-)} &\approx y_{2t}-h^2\left[\nabla^2 f(z_{2t}) \nabla f(z_{2t}) \right]_y .
	\end{aligned}
\end{equation}
Above, for a vector $v := (v_x,v_y) \in \R^{d_x + d_y}$, we write $[v]_x$ to denote $v_x$, and similarly for the $y$ component. Observe from this two-step expansion~\eqref{eq:2nd_order_expansion_seperated} that in the $(+)$ trajectory, the $x$ variable effectively performs one step of HGD, and the $y$ variable performs one step of gradient ascent; and vice versa in the $(-)$ trajectory. 
As a consequence, the \emph{expected} two-step update is
\begin{equation}\label{eq:2nd_order_expansion}
	\E[z_{2t+2}]
	= \frac{z_{2t+2}^{(+)} + z_{2t+2}^{(-)}}{2} 
	\approx z_{2t} - \frac{h}{2}\bm J \nabla f(z_{2t}) - \frac{h^2}{2}\nabla^2 f(z_{2t}) \nabla f(z_{2t})
\end{equation}
up to higher-order error terms,
where $\bm J := \begin{bmatrix} \bm I & \bm 0\\ \bm 0 &-\bm I \end{bmatrix}$. 

\par The key point is that this second-order expansion~\eqref{eq:2nd_order_expansion} can be viewed as a convex combination of two distinct updates:
\begin{itemize}
	\item HGD update: $z_{2t} - h^2 \nabla^2 f(z_{2t}) \nabla f(z_{2t})$.
	\item GDA update: $z_{2t} - h \bm{J} \nabla f(z_{2t})$. (Note that this GDA update uses conventional stepsizes: they are positive and symmetric for $x$ and $y$).
\end{itemize}
It is crucial that the update~\eqref{eq:2nd_order_expansion} incorporates both GDA and HGD, since by themselves, both algorithms fail to converge even on simple unconstrained min-max problems (see~\cref{ssec:warmup:diverge} and~\cref{ssec:connections:linear} for simple 1-dimensional counterexamples where GDA and HGD fail, respectively). 

\paragraph{Connection to consensus optimization.} Interestingly, the second-order expansion~\eqref{eq:2nd_order_expansion} of this two-step update is equivalent to the consensus optimization algorithm of~\citep{Mescheder_Nowozin_Geiger_2017}, which updates using a combination of a GDA and HGD step, namely
\begin{equation}\label{eq:consensus-opt}
z_{t+1} = z_t - h\bm J \nabla f(z_t) - \gamma\nabla \Phi(z_t).
\end{equation}
This algorithm has received significant attention in the machine learning community due to its success for training Generative Adversarial Networks (GANs), which is a notoriously challenging setting for min-max algorithms, and in particular is a setting where both GDA and HGD experience trouble converging by themselves~\citep{Mescheder_Nowozin_Geiger_2017}.

\par Although elegant convergence results have been proven for consensus optimization, this area is still nascent. Existing results either show non-explicit asymptotic convergence that requires arbitrarily close initialization and growth conditions around optimality~\citep{Mescheder_Nowozin_Geiger_2017}, or provide non-asymptotic convergence bounds but only apply to bilinear $f$~\citep{Liang_Stokes_2019} (and moreover with unaccelerated rates) or require non-standard smoothness and/or Polyak-Lojasiewicz-type assumptions on the Hamiltonian which do not apply for smooth convex-concave $f$ (or even smooth strongly-convex-strongly-concave $f$)~\citep{Azizian_Mitliagkas_Lacoste-Julien_Gidel_2020,Abernethy_Lai_Wibisono_2021}. For example, $f(x,y) = \log \cosh x$ is convex-concave, yet its Hamiltonian $\Phi(x,y) = \frac{1}{2}\|\nabla f(x,y)\|^2 = \frac{1}{2}\tanh^2(x)$ does not satisfy the Polyak-Lojasiewicz condition required by~\citep{Abernethy_Lai_Wibisono_2021}. Therefore, when we prove our convergence results for the setting of smooth convex-concave $f$ in~\cref{sec:nonlinear}, rather than building upon this connection to consensus optimization, we instead directly prove progress for two steps of the slingshot stepsizes. (In the other direction, we are hopeful that our convergence results and techniques may be helpful for analyzing consensus optimization.)

\paragraph{Connections to finite differencing.} Just like the linear version of the slingshot stepsize schedule (see~\cref{ssec:connections:linear}), the nonlinear version here implicitly implements finite-differencing in every pair of steps, enabling GDA to effectively move in directions involving second-order information about the objective, despite only ever using first-order information in its updates. 
See~\eqref{eq:2nd_order_expansion}. 

\par This way of computing second-order information using finite differencing is different from how consensus optimization and HGD are implemented. Even though both those algorithms require updates involving second-order information $\nabla^2 f(z)$, in practice they are implemented using double backpropagation. In particular, because those algorithms only require second-order information through Hessian-vector products of the form $\nabla^2 f(z) \nabla f(z)$, these quantities can be efficiently computed by differentiating the function $\nabla f(z)$ in the direction $\nabla f(z)$, see~\citep{Pearlmutter_1994}. To summarize, consensus optimization implements~\eqref{eq:consensus-opt} exactly but for objectives with specific structure ($f$ must be the composition of ``simple'' building blocks whose first/second derivatives are known); whereas the slingshot stepsize schedule approximately implements this update but applies to arbitrary $f$ and only requires black-box access to first-order queries $\nabla f$.

\section{Discussion}\label{sec:discussion}

This paper challenges the conventional wisdom that GDA cannot converge in its original form. Our key algorithmic insight for making GDA converge is the use of time-varying, asymmetric, and sometimes negative stepsizes.  This deviates from standard intuition from optimization theory (since negative stepsizes amount to backward movement) as well as monotone operator theory (since asymmetric steps treat the minimization and maximization variables separately). These new conceptual ideas lead to several natural directions for future research, such as the following. 

\paragraph*{Algorithmic building blocks.} The use of time-varying, asymmetric, negative stepsizes not only improves GDA in its original form, but also in some settings can accelerate more sophisticated variants of GDA (with momentum, optimism, extragradients, etc.) for the trivial reason that vanilla GDA is a special case; see~\cref{sec:linear} for concrete examples where this improves prior complexity bounds. Can these new algorithmic ideas be more seamlessly integrated? With these (or other) algorithmic building blocks? The proposed stepsizes converge in a qualitatively different way from standard approaches (see~\cref{fig:diff-behavior}), 
suggesting a possible algorithmic opportunity of combining these different building blocks, e.g., to exploit different types of structure. More broadly, can other algorithms similarly benefit from time-varying parameters, asymmetric updates, and/or negative movement (in a way that is similar to or different from negative momentum~\citep{gidel2019negative})? In practice or in theory? These opportunities were previously overlooked, and we are hopeful that they may become useful building blocks for the design and analysis of min-max optimization algorithms.

\begin{figure}
	\centering
	\includegraphics[width=0.7\linewidth]{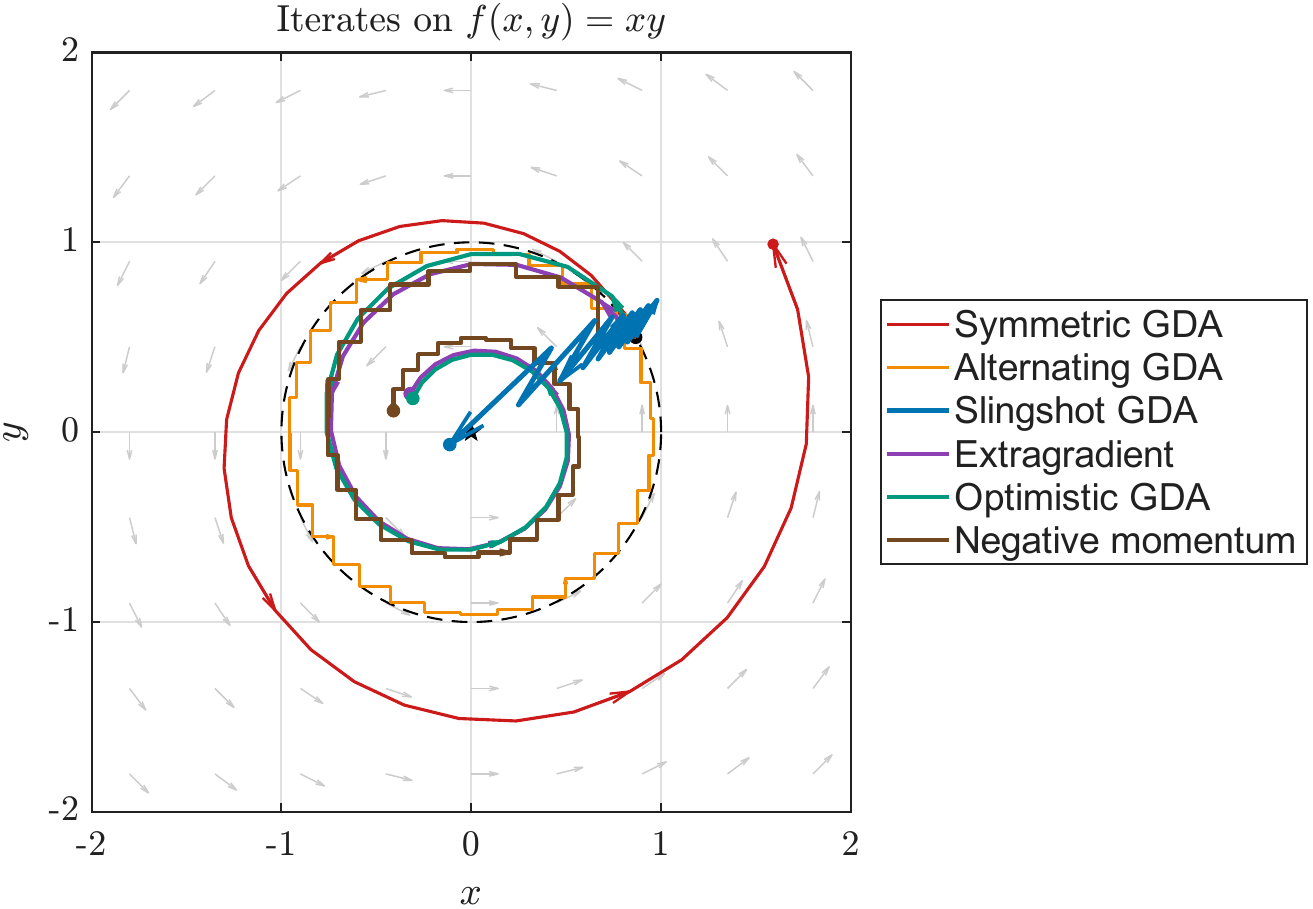}
	\caption{\footnotesize The proposed slingshot schedule converges in a qualitatively different way than existing approaches---suggesting a possible algorithmic opportunity that combines these different building blocks, e.g., to exploit different types of structure.
    Standard approaches (such as extragradients, optimism, and negative momentum) dampen the cyclic behavior of GDA along the grey vector field $(-\nabla_x f, \nabla_y f) = (-y,x)$, leading to trajectories that cycle inwards. In contrast, the slingshot stepsize schedule alternately moves in the orthogonal directions $(\nabla_x f, \nabla_y f) = (y,x)$ and $(-\nabla_x f, -\nabla_y f) = (-y,-x)$. 
    }
	\label{fig:diff-behavior}
\end{figure}

\paragraph*{Convergence in more general settings.} This paper investigates the settings of unconstrained bilinear, quadratic, and (strongly) convex-concave objectives since these are classical counterexamples for GDA and are standard testing grounds for new algorithms. To what extent do the phenomena in this paper extend to more general settings such as non-convex-concave objectives, constrained domains, and/or stochastic gradients? These questions are motivated by applications such as games, control, and deep learning. Can GDA still converge, and if so at what rate? Are the optimal stepsizes qualitatively similar to (any variation of) the slingshot stepsize schedule?

\paragraph*{Faster convergence in restricted settings.} Is faster convergence possible if the objective function is more structured? One can always decompose $f(x,y) = f_{x,y}(x,y) + f_x(x) - f_y(y)$, and in some applications the ``coupled'' term $f_{x,y}$ and/or the ``uncoupled'' terms $f_x,f_y$ have additional structure. One potential direction is if the coupled term $f_{x,y}(x,y) = x^{\top} \bm B y$ is bilinear, which is motivated for example by Fenchel games~\citep{Wang_Abernethy_Levy_2024} and Lagrangian dualization of constrained minimization problems~\citep{Bertsekas_2014}; recent work has designed faster algorithms for example when $\bm B$ is square and full rank \citep{Kovalev_Borodich_2024}. A second potential direction is if the uncoupled terms $f_x$ and $f_y$ are strongly convex and smooth with different parameters; refined accelerated rates are known for other first-order algorithms \citep{Borodich_Kormakov_Kovalev_Beznosikov_Gasnikov_2023, Zhang_Hong_Zhang_2022,Lin_Jin_Jordan_2020_opt, Jin_Sidford_Tian_2022}, and it is natural to ask if these can also be achieved by GDA. A third potential direction is if the Hessians of $f_x$ and $f_y$ have additional spectral structure; such structure has been exploited in (non-min-max) convex optimization~\citep{driscoll1998potential,goujaud2022super,oymak2021provable,kelner2022big}. A fourth potential direction is when $f_x$ and $f_y$ admit computationally tractable proximal operators, which is motivated for example by applications in image reconstruction and signal processing~\citep{Chambolle_Pock_2011,Gabay_Mercier_1976,Glowinski_Marroco_1975}.

\paragraph*{Robustness.} This paper shows that interspersing negative stepsizes---occasionally of very large magnitude---helps GDA converge. We expect that such stepsizes may make the trajectory of GDA more sensitive to rounding errors, incorrect modeling assumptions, and imprecise or stochastic gradient calculations. Indeed, the motivation of the slingshot stepsize schedule is that positive/negative steps of size $h$ cancel to provide a net movement of lower-order size $\Theta(h^2)$ in the direction of convergence, see~\cref{sec:connections}, and this delicate cancellation of higher-order movement could be impacted by modeling error. As a first step towards this, in~\cref{app:stability} we describe a numerically stable fractal-like ordering of the slingshot stepsize schedule for the bilinear setting, by leveraging the connection to convex quadratic optimization in~\cref{obs:paired} and then appealing to known stability ideas in that context. It would be interesting to more broadly understand the tradeoff between convergence rate and stability. As an illustrative example, for unconstrained bilinear problems in the setting of inexact gradients, what is the Pareto-optimal frontier of convergence vs stability---among all GDA stepsize schedules, or more generally among all first-order algorithms? 

\paragraph*{Randomization and derandomization.} In all the different settings, the slingshot stepsize schedule breaks symmetry between the minimization variable $x$ and the maximization variable $y$. Within each pair of iterations, one must choose which variable to first update with a positive step and which to first update with a negative (or zero) step. For min-max problems with linear gradients, this intra-pair order does not affect convergence (assuming exact arithmetic), see~\cref{ssec:linear-alternative}. For problems with nonlinear gradients, we break symmetry by randomizing the order between $x$ and $y$ within every pair of iterations, as this simplifies the design and analysis of the slingshot stepsize schedule, see~\cref{sec:nonlinear}. We conjecture that this result can be de-randomized. Note that this randomization for breaking \emph{intra}-pair symmetry of stepsize signs is distinct from the randomization investigated in~\cref{app:rand} for breaking \emph{inter}-pair symmetry of stepsize magnitudes.

\paragraph*{Implicit bias.} The proof of~\cref{thm:bilinear-ub} not only shows that GDA converges to a saddle point, but moreover isolates which one: it is the closest solution to initialization, namely $x^*$ and $y^*$ are the respective projections of $x_0$ and $y_0$ onto the solution sets $\tilde{x} + \Ker(\bm{B\T})$ and $\tilde{y} + \Ker(\bm{B})$. This can be interpreted as a form of ``implicit bias'' for GDA, and mirrors the well-known fact that GD always converges to the closest solution for convex quadratic minimization. It would be interesting to understand the implicit bias of the slingshot stepsize schedule beyond the bilinear setting.

\section*{Acknowledgements} We are grateful to Surbhi Goel, Aryan Mokhtari, Pablo Parrilo, Molei Tao, Yuanhao Wang, and David Zhang for insightful conversations about the related literature, and to the anonymous reviewers for helpful comments which have improved the exposition, in particular for suggesting to present our analysis for the nonlinear setting (in~\cref{app:2-step}) in the more accessible way that is now in~\cref{ssec:2step}. JMA acknowledges funding from a Sloan Research Fellowship and a Seed Grant Award from Apple.

\newpage
\appendix

\section{Alternative implementations}\label{app:alternative}

Here we illustrate how the slingshot stepsize schedule can be augmented with other ideas from the literature on stepsize schedules for (non-min-max) convex optimization. We consider i.i.d.\ random stepsizes in~\cref{app:rand} and stable fractal orderings in~\cref{app:stability}. For simplicity we focus here on the bilinear setting; it seems plausible that similar ideas may extend to more general settings.

\subsection{Random stepsizes}\label{app:rand}

Here we show how to obtain the same optimal convergence rate in~\cref{thm:bilinear-ub} by using \emph{random} stepsizes. This is inspired by the recent use of i.i.d.\ stepsizes in convex optimization~\citep{pronzato11,pronzato13,kalousek,altschuler2018greed,alt24random}. Like the deterministic stepsizes in~\cref{def:steps-bilinear}, these random stepsizes circumvent the classical GDA counterexamples because they are time-varying, asymmetric, and sometimes negative. One possible benefit of these randomized stepsizes is that they do not require prior knowledge of the number of iterations, and another possible benefit is that the variability in random runs could potentially be exploited for faster parallelized optimization~\citep[\S5]{alt24random}.
\par The proposed random stepsizes are linked in pairs in the same way as the deterministic stepsizes in~\cref{def:steps-bilinear}, i.e., 
\begin{align}\label{eq:opt_schedule-rand}
	\alpha_{2t} = -\alpha_{2t+1} = -\beta_{2t} = \beta_{2t+1} = h_t,\qquad \text{for } t=0,1, \dots \,,
\end{align}
but now $h_t^{-2}$ are not chosen deterministically (as roots of a Chebyshev polynomial), but instead are i.i.d.\,from the Arcsine distribution on the interval $(m,M)$, i.e., the distribution with density 
\begin{align}
	\frac{d\mu}{dz} = \frac{1}{\pi \sqrt{(M-z)(z-m)}} \cdot \bm{1}[z \in (m,M)] \,.
\end{align}

\begin{theorem}[Optimal random stepsizes]\label{thm:rand}
	Consider any dimensions $d_x, d_y$, any initialization $z_0 = (x_0,y_0) \in \R^{d_x} \times \R^{d_y}$, and any bilinear min-max problem~\eqref{eq:bilinear} with spectral bounds $0 < m \leq M < \infty$. Using the aforementioned random stepsizes, GDA converges to a saddle point $z^*$ at rate 
	\begin{equation}
		\|z_{2T} - z^*\| \leq C_T \|z_0 - z^*\|
	\end{equation}
	where $C_T$ satisfies
	\begin{align*}
		\lim_{T \to \infty} (C_T)^{1/T} \overset{\text{a.s.}}{=} \frac{\sqrt{\kappa} - 1}{\sqrt{\kappa + 1}}\,.
	\end{align*}
\end{theorem}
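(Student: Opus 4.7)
The plan is to reuse the polynomial identity from the proof of \cref{thm:bilinear-ub} and then analyze the resulting random polynomial via logarithmic potential theory.

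First I would observe that the stepsize pairing in~\eqref{eq:opt_schedule-rand} is structurally identical to that of \cref{def:steps-bilinear}: within each pair of iterations the signs and magnitudes are exactly what is required for the block-diagonal identity~\eqref{eq:bilinear-pf-pairing}. Reusing the SVD diagonalization and telescoping from the proof of \cref{thm:bilinear-ub} verbatim therefore gives
\begin{equation*}
\|z_{2T} - z^*\| \leq C_T \|z_0 - z^*\|, \qquad C_T := \max_{\lambda \in [m,M]} |p_T(\lambda)|,
\end{equation*}
where $p_T(\lambda) := \prod_{t=0}^{T-1}(1 - \lambda/r_t)$ and the $r_t := h_t^{-2}$ are i.i.d.\ samples from the Arcsine measure $\mu_A$ on $[m,M]$. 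The theorem then reduces to showing $C_T^{1/T} \to \rho$ almost surely, where $\rho = (\sqrt{\kap}-1)/(\sqrt{\kap}+1)$ is the limiting rate identified in \cref{lem:extremal-bilinear}.

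The second step identifies the almost-sure pointwise limit of $\tfrac{1}{T}\log|p_T(\lambda)|$ at a fixed interior point $\lambda \in (m,M)$. Splitting $\E[\log|1-\lambda/r|] = \E[\log|\lambda - r|] - \E[\log r]$, the first expectation is constant in $\lambda$ on $[m,M]$ and equals $\log((M-m)/4)$ because $\mu_A$ is the equilibrium measure of $[m,M]$; the second can be evaluated via the Joukowski map at the exterior point $0$, yielding $\log((\sqrt M + \sqrt m)^2/4)$. Their difference is exactly $\log\rho$. The lower bound $\liminf C_T^{1/T} \geq \rho$ then follows immediately from $C_T \geq |p_T(\lambda_0)|$ at any fixed interior $\lambda_0$, combined with the ordinary strong law of large numbers (the integrability of $\log|1-\lambda_0/r|$ is immediate since the Arcsine density is bounded near $\lambda_0$).

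The main obstacle is the matching upper bound $\limsup C_T^{1/T} \leq \rho$. The difficulty is that $\log|1-\lambda/r|$ has logarithmic singularities at $\lambda = r_t$, so a naive uniform SLLN does not apply. My plan is to truncate from below: for $K>0$, let $g_K(\lambda, r) := \max(\log|1-\lambda/r|, -K)$, which is jointly continuous and bounded on the compact square $[m,M]^2$. Since $g_K \geq \log|1-\lambda/r|$ pointwise, $\tfrac{1}{T}\log C_T \leq \max_\lambda \tfrac{1}{T}\sum_t g_K(\lambda, r_t)$. A standard covering argument using uniform continuity of $g_K$, combined with the pointwise SLLN at each element of a fine $\delta$-net in $[m,M]$, then gives the uniform law $\max_\lambda \tfrac{1}{T}\sum_t g_K(\lambda, r_t) \to \max_\lambda \E[g_K(\lambda, r)]$ almost surely. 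Finally $\E[g_K(\lambda, r)] \searrow \log\rho$ as $K \to \infty$ by monotone convergence, and since $\log\rho$ is (constant hence) continuous in $\lambda$ and $\lambda \mapsto \E[g_K(\lambda, r)]$ is continuous by bounded convergence, Dini's theorem upgrades this pointwise monotone convergence to uniform convergence on $[m,M]$. Sending $K \to \infty$ after $T \to \infty$ closes the upper bound and completes the proof.
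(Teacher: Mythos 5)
Your proposal is correct, and the overall strategy---re-use the block-diagonal polynomial identity from \cref{thm:bilinear-ub} to reduce to $C_T^{1/T}\to\rho := \tfrac{\sqrt\kappa-1}{\sqrt\kappa+1}$, then invoke the strong law of large numbers together with the equalizing property of the arcsine (equilibrium) measure---is exactly the paper's. The place where you genuinely add something is in the treatment of the supremum over $\lambda$. The paper's proof establishes pointwise $|p_T(\lambda)|^{1/T}\to\rho$ a.s.\ for each fixed $\lambda$ and then asserts ``the claim follows.'' That conclusion is immediate if $C_T$ is taken to be $\max_\lambda|p_T(\lambda)|$ over the \emph{finitely many} squared singular values of $\bm B$ (which is all the SVD diagonalization really requires): one simply applies the pointwise SLLN at each of the finitely many eigenvalues and intersects the resulting full-probability events. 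You instead prove the strictly stronger statement with $C_T := \sup_{\lambda\in[m,M]}|p_T(\lambda)|$, which gives a rate that is uniform over all problem instances with spectrum in $[m,M]$, not just the instance at hand. To do so you correctly identify that a naive uniform SLLN cannot be applied directly because $\log|1-\lambda/r|$ is unbounded below, and your fix---lower truncation $g_K=\max(\log|1-\lambda/r|,-K)$, a covering/uniform-continuity argument for the bounded truncated kernel, monotone convergence of $\E[g_K(\lambda,\cdot)]\searrow\log\rho$, and Dini's theorem to upgrade to uniform convergence---is a clean and correct way to close the upper bound $\limsup C_T^{1/T}\leq\rho$. Your direct potential-theory computation of $\E[\log|\lambda-r|]=\log\tfrac{M-m}{4}$ and $\E[\log r]=\log\tfrac{(\sqrt M+\sqrt m)^2}{4}$ (hence their difference $\log\rho$) replaces the paper's black-box citation of the equalizing property and is also correct. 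In short: same reduction and same key fact, but you prove a somewhat stronger, uniform version of the rate, at the cost of the truncation/covering machinery that the paper's problem-specific statement does not require.
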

\begin{proof}
	By an identical calculation as in our result for deterministic stepsize schedules (\cref{thm:bilinear-ub}), the positive/negative pairing~\eqref{eq:opt_schedule-rand} of the stepsizes ensures that for every $T$,
	\begin{align*}
		\begin{bmatrix}
			x_{2T} \\
			y_{2T}
		\end{bmatrix}
		=
		\begin{bmatrix}
			p_T(\bm {BB}\T) & \bm 0 \\ \bm 0 & p_T(\bm {B}\T \bm{B} )
		\end{bmatrix}
		\begin{bmatrix}
			x_{0} \\
			y_{0}
		\end{bmatrix}\,,
	\end{align*}
	where $p_T(z) := \prod_{t=0}^{T-1} (1 - h_t^2 z) = \prod_{t=0}^{T-1} (1 - z/r_t)$ and $r_t := h_t^{-2}$ are i.i.d.\ from the Arcsine distribution. By the same diagonalization argument as done there, $\|x_{2T} - x^*\| \leq C_T \, \|x_0 - x^*\|$ and $\|y_{2T} - y^*\| \leq C_T \, \|y_0 - y^*\|$ where $C_T := \sup_{\lambda \in [m,M]} |p_T(\lambda)|$. 
	Now for any $\lambda \in [m,M]$,
	\begin{align*}
		|p_T(\lambda)|^{1/T} 
		=
		\prod_{t=0}^{T-1} |1 - \lambda/r_t |^{1/T}
		=
		\exp\left( \frac{1}{T} \sum_{t=0}^{T-1} \log |1 - \lambda/r_t| \right)
		\overset{\text{a.s.}}{\longrightarrow} \exp\left( \E_{r} \log |1 - \lambda/r| \right)
		= \frac{\sqrt{\kappa} - 1}{\sqrt{\kappa} + 1}\,.
	\end{align*}
	Above the first step is by the definition of $p_T$, the second step is by taking an exponential and logarithm, the third step is by the Law of Large Numbers, and the final step is by the equalizing property of the Arcsine distribution~\citep[Lemma 2.2]{alt24random}.
	The claim follows.
\end{proof}

We conclude with several remarks about these random stepsizes. 1) Optimality of the above asymptotic rate follows immediately from 
taking the limit as $T \to \infty$ of the optimal rate $R_T^{1/T}$ for every finite $T$, see~\cref{lem:extremal-bilinear}. 2) This random stepsize schedule achieves the optimal convergence rate asymptotically, but not for every finite number of iterations $T$. One can prove high probability bounds on this convergence rate for finite $T$ via (non-asymptotic) concentration inequalities rather than using the (asymptotic) Law of Large Numbers, see~\citep[\S5]{alt24random}. 3) The same permutation invariances apply as in the deterministic case, cf.~\cref{ssec:linear-alternative}. 4) These random stepsizes appear to be more sensitive to numerical precision and modeling error than the deterministic stepsize schedule proposed in the following section, and therefore we suggest using the latter in practice.

\subsection{Stable fractal stepsizes}\label{app:stability}

As discussed in~\cref{ssec:linear-alternative}, the convergence rate of GDA for bilinear problems is invariant with respect to several aspects (e.g., permutations) of the slingshot stepsize schedule---assuming exact arithmetic. However, it is a well-known phenomenon that two algorithms which are identical in exact arithmetic can perform quite differently when implemented numerically. In this spirit,~\cref{fig:stability} illustrates how rounding errors (even when using unusually high levels of precision) can make some permutations of the slingshot stepsize schedule extremely unstable. 
\begin{figure}[H]
	\centering
	\includegraphics[width=0.9\linewidth]{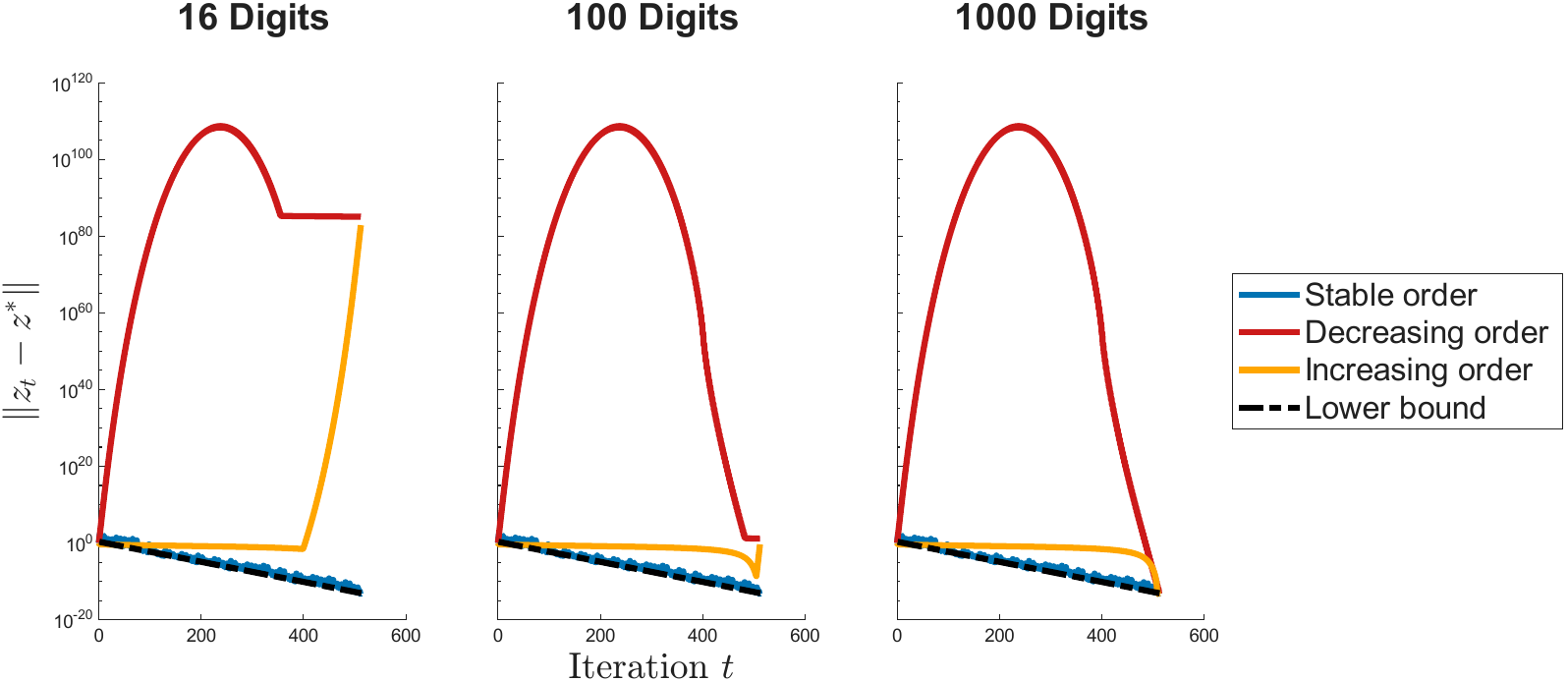}
	\caption{\footnotesize Convergence of different orderings of the proposed steps using different levels of numerical precision (``vpa'' command in MATLAB). We run $512$ iterations on the random bilinear problem described in~\cref{app:experimental-setup}. 
    }
	\label{fig:stability}
\end{figure}

These types of stability issues are well-studied in the literature on GD stepsize schedules for convex quadratic minimization. In particular, fractal permutations of the Chebyshev stepsize schedule are known to ensure better stability in both theory and practice~\citep{lebedev1971ordering, lebedev1973solution, lebedev1976utilization,lebedev2002construction,lebedev2004construction,agarwal2021acceleration}. Because our GDA stepsize schedules emulate these classical GD stepsize schedules (see~\cref{obs:paired} for the precise statement), this immediately suggests the following stable stepsize schedule for GDA.

\begin{defin}[Stable ordering of the slingshot stepsize schedule]
	A more numerically stable version of~\cref{def:steps-bilinear} is 
	\begin{align}
		\alpha_{2t} = -\beta_{2t} = -\alpha_{2t+1} = \beta_{2t+1} = h_{s^T_t}\,, \qquad  t\in\{0,1,\dots, T-1\}\,.
	\end{align}
	Here $\{h_t\}_{t=0}^{T-1}$ are the stepsizes in~\cref{def:steps-bilinear}, but the difference is in the indexing: $s^T$ is the stable ordering proposed in \citep{lebedev1971ordering}, namely initialized with $s^1 = [0]$ and defined recursively as
	\begin{equation}
		s^{2T} := \mathrm{interlace}(s^{T}, 2T - 1 - s^T),
	\end{equation}
	where $\mathrm{interlace}([a_1,...,a_k],[b_1,...,b_k] ) := [a_1,b_1,...,a_k,b_k]$.
\end{defin}

We refer to~\citep{agarwal2021acceleration} for a modern exposition of this stable fractal ordering, as well as visualizations, interpretations, and extensions. In practice, this version of the slingshot stepsize schedule appears to be more stable to rounding errors, see~\cref{fig:stability}. And in theory, due to~\cref{obs:paired}, one can use known stability results for GD in~\citep[\S3.3]{agarwal2021acceleration} for additive error (which can capture rounding error or inexact gradient computation) in order to prove analogous bounds for GDA.

\section{Deferred details}

\subsection{General second-order approximations}\label{app:second-order}

Here we generalize the informal derivations in~\cref{sec:connections} to arbitrary GDA schedules. 
Let us begin by writing the GDA update more concisely as
\begin{align*}
	z_{2t+1} = z_{2t} + \bm{D_{2t}} \nabla f(z_{2t}) \qquad \text{where} \qquad z_{2t} := \begin{bmatrix}
		x_{2t} \\ y_{2t}
	\end{bmatrix}
	\qquad \text{and} \qquad
	\bm{D_{2t}} := \begin{bmatrix}
		-\alpha_{2t} \bm{I} & \bm{0} \\ \bm{0} & \beta_{2t} \bm{I}
	\end{bmatrix}\,.
\end{align*}
By a similar Taylor expansion as in~\cref{sec:connections}, for sufficiently small stepsizes, 
\begin{align*}
	\nabla f(z_{2t+1}) = \nabla f(z_{2t} + \bm{D_{2t}} \nabla f(z_{2t})) \approx \nabla f(z_{2t}) + \nabla^2 f(z_{2t}) \bm{D_{2t}} \nabla f(z_{2t})\,.
\end{align*}
Hence two steps of GDA is approximately equal to
\begin{align}
	z_{2t+2} &= z_{2t+1} + \bm{D_{2t+1}} \nabla f(z_{2t+1}) \nonumber
	\\ &\approx z_{2t} + \left( \bm{D_{2t}} + \bm{D_{2t+1}} \right) \nabla f(z_{2t}) + \bm{D_{2t+1}} \nabla^2 f(z_{2t}) \bm{D_{2t}} \nabla f(z_{2t})\,. \label{eq:app-general}
\end{align}
This recovers the derivations in~\cref{ssec:connections:linear} and~\cref{ssec:connections:nonlinear} by plugging in the corresponding slingshot stepsize schedules (and taking an expectation for the latter scheme as it is randomized). 
\par To interpret the expansion~\eqref{eq:app-general}, notice that the first movement term $\left( \bm{D_{2t}} + \bm{D_{2t+1}} \right) \nabla f(z_{2t})$ amounts to a single GDA step from $z_{2t}$, with stepsizes $\alpha_{2t} + \alpha_{2t+1}$ and $\beta_{2t} + \beta_{2t+1}$ in the $x$ and $y$ variables, respectively. Recall from~\cref{ssec:intro:slingshot} that property (iii) of slingshot stepsize schedules requires positive net movement $\alpha_{2t} + \alpha_{2t+1}, \beta_{2t} + \beta_{2t+1} \geq 0$ in both variables; thus this GDA movement in~\eqref{eq:app-general} amounts to standard ``forward'' net movement in each variable. The other movement term $\bm{D_{2t+1}} \nabla^2 f(z_{2t}) \bm{D_{2t}} \nabla f(z_{2t})$ is related to Hamiltonian gradient descent; this connection is precise when $\bm{D_{2t+1}}$ and $\bm{D_{2t}}$ are scalings of the identity (in expectation), as is the case for the versions of the slingshot stepsize schedule discussed in~\cref{ssec:connections:nonlinear}.

\subsection{Proof of~\cref{lem:diverge-universal}}\label{app:ub-diverge}

	\paragraph*{(i) Constant stepsizes.}
	In this case, $\GDA$ has linear time-invariant dynamics:
	\begin{equation}
		\begin{bmatrix}x_{t+1}\\ y_{t+1}\end{bmatrix} = \bm  U\begin{bmatrix}x_{t}\\ y_{t}\end{bmatrix} \qquad \text{ where }  \qquad	\bm U = 
		\begin{bmatrix}
			\bm I & -\alpha \bm B \\
			\beta \bm{B}^\top & \bm I
		\end{bmatrix}\,.
		\label{eq:gda-constant}
	\end{equation}
	Let $\bm{B} = \bm{V \Sigma W}^\top$ denote the singular value decomposition of $\bm{B} $, and let $\sigma_1, \dots, \sigma_k$ denote the non-zero singular values. Then, for an appropriate permutation matrix $\bm \Pi$, 
	\begin{align*}
		\bm U = 
		\begin{bmatrix}
			\bm V & \bm 0 \\ \bm 0 & \bm W
		\end{bmatrix}
		\bm \Pi
		\bm D
		\bm \Pi^\top
		\begin{bmatrix}
			\bm V^\top & \bm 0 \\ \bm 0 & \bm W^\top
		\end{bmatrix}
	\end{align*}
	where $\bm D = \mathbb{D}( \bm M_1, \dots, \bm M_k, \bm I)$ is the diagonal concatenation of the $2 \times 2$ matrices
	\begin{align*}
		\bm M_j := \begin{bmatrix}
			1 & -\alpha \sigma_j \\
			\beta \sigma_j & 1 
		\end{bmatrix}
	\end{align*}
	and an identity matrix of the appropriate dimension $d_x + d_y - 2k$. Since the spectral radius is invariant with respect to similarity transforms, and since also the spectral radius of a diagonal concatenation is equal to the largest spectral radius among the constituent diagonal components, it follows that
	\begin{align}
		\rho (\bm U) 
		= \max\{ \rho(\bm M_1), \dots, \rho(\bm M_k), \rho(\bm I) \}
		= \max\{ \abs{1 \plusminus i\sig_1 \sqrt{\alpha \beta}}, \dots, \abs{1 \plusminus i \sig_k\sqrt{\alpha \beta}}, 1\}
		\geq 1\,,
	\end{align}
	where the last inequality is strict if $\alpha, \beta$ are nonzero. Hence $\GDA$ with constant stepsizes does not converge on the bilinear min-max problem~\eqref{eq:bilinear}, and moreover diverges if $\alpha,\beta$ are nonzero.
	
	\paragraph*{(ii) Non-negative stepsizes.} We proceed by a nearly identical diagonalization argument as above, except now the dynamics of $\GDA$ are potentially \emph{time-varying}, with update matrices
	\begin{align*}
		\bm U_t = 
		\begin{bmatrix}
			\bm V & \bm 0 \\ \bm 0 & \bm W
		\end{bmatrix}
		\bm \Pi
		\bm D_t
		\bm{\Pi}^\top
		\begin{bmatrix}
			\bm{V}^\top & \bm 0 \\ \bm 0 & \bm{W}^\top
		\end{bmatrix}
	\end{align*}
	where $\bm D_t = \mathbb{D}( \bm M_{1,t}, \dots, \bm M_{k,t}, \bm I)$ and
	\begin{align*}
		\bm M_{j,t} := \begin{bmatrix}
			1 & -\alpha_t \sigma_j \\
			\beta_t \sigma_j & 1 
		\end{bmatrix}\,.
	\end{align*}
	Since the determinant is invariant with respect to orthogonal transformations and permutations, and since also the determinant of a diagonal concatenation equals the product of the determinants of the constituent diagonal components, it follows that
	\begin{align*}
		\det (\bm U_t)
		=
        \det (\bm {D}_t)
        =
		\prod_{j=1}^k \det ( \bm M_{j,t})
		=
		\prod_{j=1}^k ( 1 + \alpha_t \beta_t \sig_j^2)
		\geq 1\,,
	\end{align*}
	where the last inequality is strict if $\alpha_t,\beta_t$ are strictly positive. Since the spectral radius of a matrix is lower bounded by the geometric mean of the eigenvalues, this implies that $\|\prod_{t=1}^T \bm U_t \| \geq \rho(\prod_{t=1}^T \bm U_t) \geq \prod_{t=1}^T \det( \bm U_t)^{1/(d_x+d_y)} \geq 1$. Thus $\GDA$ with non-negative stepsizes does not converge on the bilinear min-max problem~\eqref{eq:bilinear}.
	
	\paragraph*{(iii) Symmetric stepsizes.} The argument is identical to case (ii), with one modification: the reason that $\det(\bm{M_{j,t}}) \geq 1$ is now due to $\alpha_t = \beta_t \in \R$ rather than $\alpha_t,\beta_t \geq 0$.

\subsection{Proof of~\cref{thm:bilinear-lb}}\label{app:bilinear:lb}

This lower bound for \emph{asymmetric} Krylov-subspace algorithms follows by a straightforward adaptation of the known lower bounds for \emph{symmetric} Krylov-subspace algorithms~\citep{azizian2020accelerating, Ibrahim_Azizian_Gidel_Mitliagkas_2020}. We begin by recalling the definition of these classes of algorithms. Symmetric Krylov-subspace algorithms~\citep{azizian2020accelerating, Ibrahim_Azizian_Gidel_Mitliagkas_2020} are iterative algorithms that satisfy the ``linear span assumption'' 
\begin{align*}
	\begin{bmatrix}
		x_t \\ y_t
	\end{bmatrix}
	\in \begin{bmatrix}
		x_0 \\ y_0
	\end{bmatrix}
	+ \spann \left\{
	\begin{bmatrix}
		- \nabla_x f(x_s,y_s) \\ \nabla_y f(x_s,y_s)
	\end{bmatrix}
	\right\}_{s \in \{0, \dots, t-1\}}
	\,.
\end{align*}
This captures many common algorithms, but precludes asymmetric updates to the minimization variable $x$ and the maximization variable $y$, and thus does not capture the slingshot stepsize schedule (among other algorithms). Such updates are captured by the following richer class of algorithms.

\begin{defin}[Asymmetric Krylov-subspace algorithms]\label{def:krylov}
		An iterative algorithm for a min-max optimization problem is an asymmetric Krylov-subspace algorithm if for all $t$, its $t$-th iterate satisfies
		\begin{align*}
			&x_t \in x_0 + \spann\{\nabla_x f(x_s,y_s)\}_{s \in \{0, \dots, t-1\}} \,,
			\\
			&y_t \in y_0 + \spann\{\nabla_y f(x_s,y_s)\}_{s \in \{0, \dots, t-1\}}\,.
		\end{align*}
	\end{defin}

	We now turn to proving~\cref{thm:bilinear-lb}. Briefly, the argument reduces the question of lower bounding the convergence rate of symmetric Krylov-subspace algorithms for bilinear min-max optimization problems, to lower bounding the maximal value of a low-degree polynomial over an interval---a standard result for lower bounds in quadratic optimization (see, e.g.,~\citep{nesterov-survey}). In the asymmetric case, this reduction is made possible by the following lemma, which can be shown by induction; see~\citep[Appendix C.1]{yoon2021accelerated}. Note that this is slightly more involved than the symmetric case, because in that case the $2 \times 2$ block matrix of polynomials in this lemma can be simplified to a polynomial of the $2 \times 2$ block matrix $\begin{bmatrix} \bm{I} & \bm{B} \\ -\bm{B}^T & \bm{I} \end{bmatrix}$.

	\begin{lemma}\label{lem:block-poly}
		For every iteration $t$, the $t$-th iterate of an asymmetric Krylov-subspace algorithm on the bilinear problem~\eqref{eq:bilinear} is of the form
		\begin{equation} \label{eq:poly_cond}
			\begin{bmatrix} x_t - x^*\\ y_t - y^* \end{bmatrix}=
			\begin{bmatrix}
				p^{(11)}(\bm{B}\bm{B}\T) & p^{(12)}(\bm{B}\bm{B}\T) \bm B \\
				p^{(21)}(\bm{B}\T\bm{B}) \bm B\T  & p^{(22)}(\bm{B}\T\bm{B})
			\end{bmatrix} \begin{bmatrix} x_0 - x^*\\ y_0 - y^* \end{bmatrix}\,,
		\end{equation}
		where $p^{(11)}$ and $p^{(22)}$ are polynomials of degree at most $\lfloor t/2\rfloor$,  $p^{(12)}$ and $p^{(21)}$ are polynomials of degree at most $\lceil t/2 \rceil - 1$, and $p^{(11)}(0)=p^{(22)}(0)=1$.
	\end{lemma}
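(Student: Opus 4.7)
The plan is to prove this by induction on $t$, leveraging the commuting identity $\bm{B}\, q(\bm{B}\T\bm{B}) = q(\bm{B}\bm{B}\T)\,\bm{B}$ that holds for any univariate polynomial $q$. This identity lets polynomials in $\bm{B}\T\bm{B}$ be rewritten as polynomials in $\bm{B}\bm{B}\T$ (and vice versa) when hopping across a factor of $\bm{B}$ or $\bm{B}\T$. The base case $t = 0$ is immediate by taking $p^{(11)} = p^{(22)} \equiv 1$ and $p^{(12)} = p^{(21)} \equiv 0$, interpreting negative-degree polynomials as the zero polynomial.

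For the inductive step, I would first observe that the saddle-point conditions $\bm{B}(\tilde y - y^*) = 0$ and $\bm{B}\T(\tilde x - x^*) = 0$ imply the simplified gradient formulas $\nabla_x f(x_s, y_s) = \bm{B}(y_s - y^*)$ and $\nabla_y f(x_s, y_s) = \bm{B}\T(x_s - x^*)$. Then by \cref{def:krylov},
\begin{align*}
x_{t+1} - x^* &= (x_0 - x^*) + \sum_{s \le t} a_s\, \bm{B}(y_s - y^*), \\
y_{t+1} - y^* &= (y_0 - y^*) + \sum_{s \le t} b_s\, \bm{B}\T(x_s - x^*),
\end{align*}
for some algorithm-chosen scalars $a_s, b_s$. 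Substituting the inductive form~\eqref{eq:poly_cond} for $y_s - y^*$ and applying the commuting identity twice---once to rewrite $\bm{B}\, p^{(21)}_s(\bm{B}\T\bm{B})\, \bm{B}\T$ as $(\bm{B}\bm{B}\T)\, p^{(21)}_s(\bm{B}\bm{B}\T)$, and once to rewrite $\bm{B}\, p^{(22)}_s(\bm{B}\T\bm{B})$ as $p^{(22)}_s(\bm{B}\bm{B}\T)\, \bm{B}$---everything collects into~\eqref{eq:poly_cond} for $z_{t+1}$ with
\begin{equation*}
p^{(11)}_{t+1}(\lambda) = 1 + \lambda \sum_{s \le t} a_s\, p^{(21)}_s(\lambda), \qquad p^{(12)}_{t+1}(\lambda) = \sum_{s \le t} a_s\, p^{(22)}_s(\lambda),
\end{equation*}
and by a symmetric derivation, $p^{(21)}_{t+1}(\lambda) = \sum_{s \le t} b_s\, p^{(11)}_s(\lambda)$ and $p^{(22)}_{t+1}(\lambda) = 1 + \lambda \sum_{s \le t} b_s\, p^{(12)}_s(\lambda)$.

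Finally, I would verify the constraints. The normalization $p^{(11)}_{t+1}(0) = p^{(22)}_{t+1}(0) = 1$ follows immediately because the prefactor $\lambda$ kills each sum at $\lambda = 0$. For the degree bounds, the inductive hypotheses $\deg p^{(21)}_s \le \lceil s/2 \rceil - 1$ and $\deg p^{(22)}_s \le \lfloor s/2 \rfloor$ for $s \leq t$ propagate through the formulas above to give $\deg p^{(11)}_{t+1} \leq 1 + (\lceil t/2 \rceil - 1) = \lceil t/2 \rceil = \lfloor (t+1)/2 \rfloor$ and $\deg p^{(12)}_{t+1} \leq \lfloor t/2 \rfloor \leq \lceil (t+1)/2 \rceil - 1$, and symmetrically for $p^{(21)}_{t+1}, p^{(22)}_{t+1}$. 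The main---if minor---obstacle is precisely this floor/ceiling bookkeeping, since the $+1$ degree bump from the extra $\lambda$ factor on the diagonal polynomials must match the staggered ceilings and floors in the stated degree bounds, which is exactly why the lemma's degree bounds alternate between $\lfloor t/2 \rfloor$ and $\lceil t/2 \rceil - 1$.
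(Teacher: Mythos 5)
Your proof is correct. The paper itself does not supply a proof of this lemma---it just says ``which can be shown by induction; see~\citep[Appendix C.1]{yoon2021accelerated}''---so you are filling in details the paper defers to an external reference. Your argument is exactly the inductive route the paper alludes to, and all the steps check out: the simplified gradient formulas $\nabla_x f(x_s,y_s)=\bm B(y_s-y^*)$ and $\nabla_y f(x_s,y_s)=\bm B\T(x_s-x^*)$ follow from the kernel characterization of saddle points given in~\cref{ssec:bilinear}; the commutation identity $\bm B\,q(\bm B\T \bm B)=q(\bm B\bm B\T)\,\bm B$ is the correct mechanism for collapsing the block structure; and the recursions
\begin{equation*}
p^{(11)}_{t+1}(\lambda)=1+\lambda\sum_{s\le t}a_s\,p^{(21)}_s(\lambda),\qquad p^{(12)}_{t+1}(\lambda)=\sum_{s\le t}a_s\,p^{(22)}_s(\lambda)
\end{equation*}
(and their symmetric counterparts) give precisely the claimed normalization and degree bounds, using the identities $\lceil t/2\rceil=\lfloor(t+1)/2\rfloor$ and $\lfloor t/2\rfloor=\lceil(t+1)/2\rceil-1$. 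One small point worth being explicit about: you are implicitly invoking \emph{strong} induction (the inductive hypothesis is applied at every $s\le t$, not just $s=t$), which is fine but should be stated when writing this up.
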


    For simplicity, we prove~\cref{thm:bilinear-lb} below for non-adaptive Krylov-subspace algorithms, i.e., algorithms for which the coefficients of the linear combination in~\cref{def:krylov} do not depend on the instance. This can be upgraded to adaptive algorithms by appealing to a stronger lower bound for convex quadratic optimization, namely rather than appealing to~\cref{lem:extremal-bilinear} (which can only be used to show a lower bound for non-adaptive algorithms for convex quadratic optimization), instead appeal to~\citep[Theorem 5.1]{altschuler2018greed} (which shows that the convergence rate $R_T$ is also optimal for adaptive algorithms).\footnote{The classical lower bound of~\citep{nemirovskij1983problem} could also be used in lieu of~\citep[Theorem 5.1]{altschuler2018greed}, but that only matches $R_T$ asymptotically and is weaker for every finite $T$.} The proof is otherwise the same, as it relies on the same reduction. 

    \begin{proof}[Proof of~\cref{thm:bilinear-lb}]
       Consider a $1$-dimensional bilinear min-max problem $\min_{x \in \R} \max_{y \in \R} \, b(x-x^*)(y-y^*)$, where $y^* = y_0$, $x^* \neq x_0$, and $b^2 \in [m,M]$ is a parameter to be chosen. By~\cref{lem:block-poly}, $x_{2T} - x^* = p(b^2) \cdot (x_0 - x^*)$ for some polynomial $p$ of degree at most $T$ satisfying $p(0) = 1$. By~\cref{lem:extremal-bilinear}, there exists $\lambda \in [m,M]$ satisfying $|p(\lambda)| \geq R_T$. Thus, by choosing $b := \sqrt{\lambda}$, we have $|x_{2T} - x^*| \geq R_T |x_0 - x^*|$. The result now follows since $|y_{2T} - y^*| \geq 0 = |y_0 - y^*|$ holds trivially.
    \end{proof}

\subsection{Proof of~\cref{lem:2step}}\label{app:2-step}

Here we give an alternative, sharper analysis of the two-step progress guarantee in~\cref{lem:2step}. The purpose of including this is twofold. First, it establishes the constants stated in~\cref{lem:2step}, rather than the weakened constants proved in~\cref{ssec:2step}. Second, and more conceptually, this proof is guided by the performance estimation problem (PEP) framework, which systematically combines a given set of valid inequalities in order to obtain the tightest possible analysis of a prescribed type. 
In the present setting of~\cref{lem:2step}, the PEP analysis certifies the desired $2$-step progress guarantee by identifying and combining an appropriate set of inequalities---based on smoothness, (strong) convexity-concavity, and sum-of-squares terms, some evaluated at auxiliary non-iterate points.
This ability of PEP to systematically search over convergence proofs was helpful for our design of the slingshot stepsize schedule and directly motivated the simpler, more interpretable proof in~\cref{ssec:2step}. Indeed, the proof in~\cref{ssec:2step} can be viewed as the same underlying argument with this detailed PEP/sum-of-squares certificate replaced by slightly weaker Lipschitz estimates. Notice also that both proofs crucially rely on the same non-iterate mixed points identified by PEP, such as $(x_1^{(-)},y_1^{(+)})$; the use of these auxiliary points is essential for any such analysis, as discussed below.

Recall the notation in~\cref{ssec:2step}. Here we prove the stronger inequality
\begin{equation}\label{eq:2step-pf-tight}
\left(1- h \mu\right) \|z_0 - z^*\|^2 - c_{h,\mu} \left\|\nabla f(z_0)\right\|^2 
- \frac{1}{2} \left( \left\|z_{2}^{(-)}-z^*\right\|^2 + \left\|z_{2}^{(+)}-z^*\right\|^2 \right) \geq 0\,,
\end{equation}
where $c_{h,\mu} :=\tfrac{h^2\left(1 +(4\mu-5)h + (6-9\mu+2 \mu^2) h^2\right)}{2(1+(\mu-2)h)}$.
Note that~\eqref{eq:2step-pf-tight} implies~\cref{lem:2step} because for $h < 1/3$, then $c_{h,\mu}$ is increasing in $\mu \in [0,1]$, and takes the desired minimal value $c_{h,0} = h^2(1-3h)/2$ at $\mu = 0$.

\paragraph*{Non-negative building blocks.} We prove~\eqref{eq:2step-pf-tight} by re-writing the left hand side as the sum of three types of non-negative quantities. The simplest such non-negative quantity is based on the assumption that $f$ is $1$-smooth, i.e., $\nabla f$ is $1$-Lipschitz:
\begin{equation}
P(z,z') := \|z - z'\|^2 - \|\nabla f(z) - \nabla f(z')\|^2
\end{equation}
is non-negative for any two points $z = (x,y)$ and $z' = (x',y')$. A second type of non-negative quantity that we use is based on the co-coercivity inequality for a convex function $g$ (which will later be taken to be $f(\cdot,y)$ for a fixed $y$, or $-f(x,\cdot)$ for a fixed $x$):
\begin{align*}
    Q_g(v,w) := \;g(v) - g(w)  + \frac{1}{2(1 - \mu)} \Big[2  \langle \nabla g(w) - \mu \nabla g(v), w - v \rangle - \|\nabla g(v) - \nabla g(w)\|^2 - \mu \|v - w\|^2  \Big]
\end{align*}
is non-negative for every pair of points $v,w$~\citep{taylor2017smooth}. The final type of non-negative quantity that our proof uses is a sum-of-squares term, with constituent squares
\begin{align*}
S_1 &= \frac{1}{2(1-\mu)}\left\|h\mu \nabla_x f(x_0,y_0) - \nabla_x f\left(x^{(+)}_1, y^{(+)}_1 \right) + \nabla_x f\left(x^*, y^{(+)}_1\right) + \mu x_0 - \mu x^* \right\|^2\,,\\
 S_2 &= \frac{1}{2(1-\mu)}\left\|h\mu \nabla_y f(x_0,y_0) - \nabla_y f\left(x^{(-)}_1, y^{(-)}_1 \right) + \nabla_y f\left( x^{(-)}_1,y^*\right) -  \mu y_0 + \mu y^* \right\|^2\,, \\
S_3 &= \frac{1}{2(1-\mu)}\left\| h \mu \nabla_x f(x_0,y_0) + \nabla_xf \left(x_1^{(-)},y^* \right) -\mu x_0 +\mu x^*  \right\|^2\,, \\
S_4&= \frac{1}{2(1-\mu)} \left\| h \mu \nabla_y f(x_0,y_0) + \nabla_yf \left(x^*,y_1^{(+)} \right) + \mu y_0  -\mu y^* \right\|^2\,, \\
S_5 &= \frac{1}{2\rho(1-\mu)} \left\|\rho \nabla_x f\left(x^{(+)}_1, y^{(+)}_1\right) - \nabla_x f\left(x^{(-)}_1, y^{(+)}_1\right) - 2h\mu \nabla_x f(x_0, y_0)  \right\|^2\,,\\
S_6 &= \frac{1}{2\rho(1-\mu)} \left\|\rho \nabla_y f\left(x^{(-)}_1, y^{(-)}_1\right) - \nabla_y f\left(x^{(-)}_1, y^{(+)}_1\right) - 2h \mu \nabla_y f(x_0, y_0)  \right\|^2\,, \\ 
S_7 &= \frac{1}{2\rho(\rho-h)} \left\| (\rho-h) \nabla f(x^{(-)}_1, y^{(+)}_1)  - (\rho + 2h(h-1)) \nabla f(x_0,y_0) \right\|^2\,,
\end{align*}
where we denote $\rho := 1+h\mu-h$. 

\paragraph*{Certificate of non-negativity.} Specifically, the left hand side of~\eqref{eq:2step-pf-tight} is equal to $h$ times
\begin{align}
Q_{\phi}\left(x_{1}^{(+)}, x_{1}^{(-)}\right) + &Q_{\phi}\left(x^*, x_{1}^{(+)}\right) + Q_{\phi^*}\left(x_{1}^{(-)}, x^* \right)  + Q_{\psi}\left(y_{1}^{(-)}, y_{1}^{(+)}\right) + Q_{\psi}\left(y^*, y_{1}^{(-)}\right) +   Q_{\psi^*}\left(y_{1}^{(+)}, y^*\right) \nonumber \\
+ &\frac{1}{2}
P\left(z_0, z'\right)
+ \sum_{i=1}^7 S_i\,, \label{eq:2step-identity}
\end{align}
where we denote $z' := (x_{1}^{(-)},y_{1}^{(+)})$, $\phi := f(\cdot,y_1^{(+)})$, $\phi^* = f(\cdot,y^*)$, $\psi := -f(x_1^{(-)},\cdot)$, and $\psi^* := -f(x^*,\cdot)$.

The fact that the left hand side of~\eqref{eq:2step-pf-tight} is equal to $h$ times~\eqref{eq:2step-identity} can be checked in a conceptually simple (albeit tedious) manner by expanding out the aforementioned definitions for the three types of terms and plugging in the definitions~\eqref{eq:2step-traj1} and~\eqref{eq:2step-traj2} of the algorithm's iterates. For brevity and convenience, we provide a short Mathematica script that rigorously verifies this identity at the URL given in the references~\citep{MathematicaURL}. 

\par It follows that~\eqref{eq:2step-pf-tight} is non-negative, as desired, since~\eqref{eq:2step-identity} is the sum of non-negative terms. Note that here we are using the assumption that $f$ is convex-concave because that implies $\phi,\psi,\phi^*,\psi^*$ are convex functions, hence the co-coercivities in~\eqref{eq:2step-identity} are non-negative.

\paragraph*{Remarks.} We make two remarks about this proof. First, the expansion~\eqref{eq:2step-identity} enjoys a certain duality: appearances of $x_{t}^{(-)}$ and $x_t^{(+)}$ are respectively complemented by appearances of $y_{t}^{(+)}$ and $y_t^{(-)}$. This occurs in the co-coercivities (note the symmetry between the first three terms and the second three terms), the smoothness term (note the duality in $z' = (x_1^{(-)}, y_1^{(+)})$), and in the sum-of-squares terms (note that $S_1,S_2$ and $S_3,S_4$ and $S_5,S_6$ are paired in this way; $S_7$ is the combination of two such terms as it involves the full-vector gradient).

Second, this analysis approach---expressing the desired quantity in terms of non-negative quantities involving co-coercivities, smoothness inequalities, and sum-of-squares terms---is motivated by the Performance Estimation Problem (PEP) framework pioneered by~\citep{drori2014performance}. PEP has been recently used in many min-max settings~\citep{Ryu_Taylor_Bergeling_Giselsson_2020, Rubbens_Bousselmi_Colla_Hendrickx_2023}. However, typical PEP approaches rely on (variations of) the elegant fact from (non-min-max) convex optimization that in order to prove the tightest possible convergence rates for a first-order algorithm, it is sufficient to only use function and gradient evaluations at the algorithm's iterates~\citep{taylor2017smooth}. In contrast, our analysis here---and any alternative analysis such as that in~\cref{ssec:2step}---must also incorporate function and gradient evaluations that are \emph{not} at the algorithm's iterates. For example, function and gradient evaluations at $(x_1^{(-)}, y_1^{(+)})$ appear in the sum-of-squares terms, the co-coercivity terms, and the smoothness term.

\subsection{Experimental setup}\label{app:experimental-setup}

\cref{fig:conv,fig:stability} are preliminary numerical simulations that illustrate the convergence and stability properties, respectively, of the slingshot stepsize schedule on bilinear problems. The problem instances $\min_x \max_y x^T \bm{B} y$ are generated by drawing $\tilde{\bm B} \in \R^{30 \times 30}$ with i.i.d.\ uniform entries in $[0,1]$, and setting $\bm B = \bm U(\bm\Sigma+\bm I)\bm V^\top$ where $\tilde{\bm B} = \bm U \bm \Sigma \bm V^\top $ denotes the SVD decomposition of $\tilde{\bm B}$. This ensures that the singular values of $\bm B$ are lower bounded by $m = 1$, and in the problem instance shown we use an upper bound $M = 300$ (i.e., $L := \sqrt{M} = \sqrt{300}$). Qualitatively similar results are observed in other problems. 

\par The stepsizes for extragradient, optimistic GDA, and extra anchored gradient are respectively set to the standard prescriptions of $1/(\sqrt{2}L)$~\citep{gorbunov2022extragradient}, $1/(3L)$~\citep{Gorbunov_Taylor_Gidel},  and $1/(8L)$~\citep{yoon2021accelerated}. For a fair comparison, every full step of the extragradient and extra anchored gradient algorithms is counted as two iterations in~\cref{fig:conv,fig:cc-convergence} because it uses twice as many gradient queries.

\par In these experiments, the practical performance of the compared algorithms matches their theoretical bounds. For example, the slow convergence of extragradient and optimistic GDA in~\cref{fig:conv} illustrates the known rate $\mathcal{O}(\kappa \log 1/\eps)$, and the slingshot stepsize schedule converges at the optimally accelerated rate $\mathcal{O}(\sqrt{\kappa} \log 1/\eps)$ shown in~\cref{thm:bilinear-ub}. As another example, extragradient, optimistic GDA, and the (non-tailored version of) slingshot stepsizes for convex-concave problems match their theoretical rates of $\mathcal{O}(1/\eps)$ in~\cref{fig:cc-convergence}, and the convergence of extra anchored gradient matches the accelerated rate of $\mathcal{O}(1/\sqrt{\eps})$ for smooth convex-concave problems.

\footnotesize
\addcontentsline{toc}{section}{References}
\bibliographystyle{plainnat}
\bibliography{negsteps}

\end{document}